\documentclass[reqno,12pt]{amsart}

\usepackage{amsmath,amsthm,amssymb,amsfonts, mathabx, tikz-cd}
\usepackage{commath}
\usepackage{extarrows}

\usepackage{tikz}
\usetikzlibrary{decorations.pathreplacing,backgrounds}
\usepackage{xcolor}

\usepackage{enumitem}
\usepackage{float}

\usepackage{titletoc}
\usepackage[titletoc]{appendix}

\usepackage{geometry}
\usepackage[format=hang]{caption}

\usepackage[hyperfootnotes=false,draft=false,pdftex, colorlinks=true, linkcolor=black, citecolor=black, urlcolor=black, hypertexnames=false]{hyperref}

\let\oldbibliography\thebibliography
\renewcommand{\thebibliography}[1]{%
	\oldbibliography{#1}%
	\setlength{\itemsep}{3pt}
}

\numberwithin{equation}{section}
\counterwithin{figure}{section}

\newcommand{\eq}{\,=\,}

\newcommand{\ms}[1]{\mathsf{#1}}
\newcommand{\vect}[1]{\vec{#1}\,}
\renewcommand{\norm}[1]{\lVert #1 \rVert}

\newcommand{\textbfit}[1]{\textbf{\textit{#1}}}
\newcommand\Simeq{\;\simeq\;}

\newcommand{\overbar}[1]{\mkern 1mu\overline{\mkern-1mu#1\mkern-1mu}\mkern 1mu}
\newcommand{\highoverline}[1]{\overbar{\vphantom{#1^{\rule{0pt}{0.3ex}}}#1}}

\tikzcdset{
	arrow style=tikz,
	diagrams={thick, >=stealth},
	every arrow/.append style={line width=0.8pt}
}

\newcommand{\LD}{\ms{LD}}

\newcommand{\uFrob}{\ms{Frob}}

\newcommand{\Free}{F^{\LD}_S}
\newcommand{\FreeF}{F^{\uFrob}_S}
\newcommand{\FreeC}{F^{\LD}_{{U(\cC)}}}
\newcommand{\FreeG}{F^{\uFrob}_{{U(G)}}}  

\newcommand{\rotamp}[2]{\mathbin{\rotatebox[origin=c]{180}{$#1\&$}}}
\newcommand{\parll}{{\mathpalette\rotamp\relax}}
\DeclareMathOperator{\partimes}{{\parll}}
\newcommand{\ot}{\otimes}

\newcommand{\tim}{\times}

\newcommand{\id}{{\operatorname{id}}}
\newcommand{\idC}{{\operatorname{id}_{\cC}}}

\newcommand{\dom}{{s}}
\newcommand{\codom}{{t}}
\newcommand{\Ob}{{\operatorname{Ob}}}

\newcommand{\ao}{\alpha}
\newcommand{\aoi}{\alpha^{-1}}

\newcommand{\ap}{\highoverline{\alpha}}
\newcommand{\api}{\highoverline{\alpha}^{-1}}

\newcommand{\distl}{\delta^l}
\newcommand{\distr}{\delta^r}

\newcommand{\ta}{\tau}
\newcommand{\ka}{\kappa}

\newcommand{\slongrightarrow}[1]{\,\longrightarrow{#1}\,}
\newcommand{\sxlongrightarrow}[1]{\,\xlongrightarrow{#1}\,}
\newcommand{\myrightleftarrows}[1]{\,\mathrel{\substack{\xrightarrow{#1} \\[-.5ex] \xleftarrow{#1}}}\,}

\newcommand{\fM}{\mathfrak{M}} 

\newcommand{\fS}{S}
\newcommand{\fY}{\mathfrak{Y}}
\newcommand{\fZ}{\mathfrak{Z}}
\newcommand{\fd}{\mathfrak{d}}
\newcommand{\fm}{\mathfrak{m}} 

\newcommand{\Neq}{\mathrel{\scalebox{1.1}{$\neq$}}}
\newcommand{\NNez}{\NN_{0}}

\newcommand{\e}{\emptyset}

\def\cC{\mathcal C}
\def\cD{\mathcal D}
\def\cE{\mathcal E}
\def\cF{\mathcal F}

\def\CC{\mathbb C}

\def\NN{\mathbb N}

\theoremstyle{plain}
\newtheorem{theorem}{Theorem}[section]
\newtheorem{lemma}[theorem]{Lemma}
\newtheorem{proposition}[theorem]{Proposition}              
\newtheorem{corollary}[theorem]{Corollary}        

\newtheorem*{cohthm}{Categorical Coherence Theorem}
\newtheorem*{funthm}{Functorial Coherence Theorem}
\newtheorem*{spider}{Spider Theorem}

\theoremstyle{definition} 
\newtheorem{definition}[theorem]{Definition}

\theoremstyle{remark}
\newtheorem{remark}[theorem]{Remark}
\newtheorem{example}[theorem]{Example}
\newtheorem{construction}[theorem]{Construction}

\newcounter{cmt}

\usepackage{scalerel}
\newcommand{\mrhup}{\scaleobj{0.5}{\rightharpoonup}}
\newcommand{\mlhup}{\scaleobj{0.5}{\leftharpoonup}}
\newcommand{\mrelb}{\hstretch{0.5}{\scaleobj{0.5}{\relbar}}}
\makeatletter
\def\rhfill@#1{$\m@th\thickmuskip0mu\medmuskip\thickmuskip\thinmuskip\thickmuskip
   \relax#1\mkern+2.5mu\cleaders\hbox{$#1\mrelb\mkern-1mu$}\hfill\mkern-6mu\mrhup\mkern+0.25mu$}
\def\lhfill@#1{$\m@th\thickmuskip0mu\medmuskip\thickmuskip\thinmuskip\thickmuskip
   \relax#1\mkern+1.5mu\mlhup \cleaders\hbox{$#1\mkern-2.5mu\mrelb$}\hfil\mkern+0.5mu$}
\DeclareRobustCommand{\overrightharpoon}{\mathpalette{\overarrow@\rhfill@}}
\DeclareRobustCommand{\overleftharpoon}{\mathpalette{\overarrow@\lhfill@}}
\makeatother

\let\vect=\overrightharpoon

\let\sm=\smallsetminus
\newcommand{\ff}{\mathfrak{f}}

\def\rmlabel{\upshape({\itshape \roman*\,})}

\def\alabel{\upshape({\itshape \alph*\,})}

\newcommand{\ter}{{\operatorname{ter}}}
\newcommand{\init}{{\operatorname{init}}}
\newcommand{\Idd}{{\operatorname{Id}}}

\let\prt=\partimes

\newcommand{\Rho}{\mathrm{P}}

\author{Max Demirdilek, Christian Reiher, Christoph Schweigert}

\title{Linearly distributive coherence in the absence of units}

\begin{document}

\begin{abstract}
	Coherence in a monoidal category asserts that all morphisms built from structural isomorphisms with a fixed source and target coincide. These structural isomorphisms include, in particular, the associators. Linearly distributive categories carry two tensor products, with structural morphisms given by associators and distributors relating the two tensor products. In several examples, including Grothendieck--Verdier categories, also known as \(\ast\)-autonomous categories, these distributors need not be invertible.
	
	We give a self-contained proof that linearly distributive categories {\em without units} are coherent, while units may obstruct coherence. With the same techniques, we also establish an analogous coherence result for Frobenius linearly distributive functors without units. These results admit a reformulation in terms of directed paths in associahedra and multiplihedra.
\end{abstract} 

\maketitle

\tableofcontents

\section{Introduction}\label{sec:Intro}

Mac Lane’s coherence theorem is a foundational result in the theory of monoidal categories.
To understand it, recall that associativity in a monoid implies that all ways of multiplying a finite string of elements give the same value. This statement can be represented by the following family of graphs: for each $n$, consider the graph whose vertices are binary bracketings of $n$ letters, with an unoriented edge for each application of associativity.
For $n=4$, it takes the following form:
\begin{figure}[H]
	\centering
	\begin{tikzpicture}[x=1.3mm,y=1mm,scale=1.31]
		\tikzset{every node/.style={scale=.8}}
		\tikzset{>=stealth}
		\tikzset{mm/.style={execute at begin node=$\displaystyle, execute at end node=$}}
		\draw[mm]
		(-11.5,-5) node (1120) {((a \cdot b)\cdot c)\cdot d}
		(11.5,-5) node (1210) {(a\cdot {(b \cdot c)})\cdot d}
		(0,20) node (3010) {a\cdot {(b \cdot (c\cdot d))}}
		(-16.5,9.5) node (2020) {(a\cdot b)\cdot(c \cdot d)}
		(16.5,9.5) node (2110) {a\cdot {((b\cdot c)\cdot d)}}
		;
		\draw[-,thick] (1210) -- (2110) node [midway,xshift=6ex,yshift=-0.5ex] {};
		\draw[-,thick]
		(3010) -- (2110) node [midway,yshift=1.5ex, xshift=6.5ex] {};
		\draw[-,thick]
		(1120) -- (1210) node [midway,yshift=-4ex] {};
		\draw[-,thick]
		(2020) -- (1120) node [midway,xshift=-6ex,yshift=0.5ex] {};
		\draw[-,thick]
		(2020) -- (3010) node [midway,xshift=-5ex,yshift=2ex] {}
		;
	\end{tikzpicture}
	\caption{The graph of multiplication for the string of elements \(a,b,c,d\) in an associative monoid.}
	\label{fig:ass}
\end{figure}

Each such graph is connected; hence finite products are independent of the bracketing.

\addtocontents{toc}{\protect\setcounter{tocdepth}{1}}

\subsection{Monoidal categories} 
A monoidal category $\cC$ comes equipped with a tensor product
\begin{equation}
	\ot:\: \cC\times \cC \slongrightarrow \cC\,.
\end{equation}
Associativity is not imposed as equality but via isomorphisms
\begin{equation}
	\alpha_{A,B,C}: \quad A\otimes (B\otimes C) \myrightleftarrows{\rule{.5cm}{0cm}} 
	(A\otimes B)\otimes C \quad : \alpha^{-1}_{A,B,C} \,,
\end{equation}
more precisely, a natural isomorphism, called the \emph{associator}.

When comparing two different bracketings of an unbracketed tensor product of objects, repeated use of the associator or its inverse may yield a priori different isomorphisms between them. For example, when \(n=4\), there are distinct ways to pass between any two bracketings:

\begin{figure}[H]
	\centering
	\begin{tikzpicture}[x=1.3mm,y=1mm,scale=1.53]
		\tikzset{every node/.style={scale=.8}}
		\tikzset{>=stealth}
		\tikzset{mm/.style={execute at begin node=$\displaystyle, execute at end node=$}}
		\draw[mm]
		(-11.5,-5) node (1120) {((A\ot B)\ot C)\ot D}
		(11.5,-5) node (1210) {(A\ot {(B \ot C)})\ot D}
		(0,20) node (3010) {A\ot {(B \ot (C\ot D))}}
		(-16.5,9.5) node (2020) {(A\ot B)\ot(C \ot D)}
		(16.5,9.5) node (2110) {A\ot {((B\ot C)\ot D)}}
		;
		\draw[transform canvas={yshift=0.3ex,xshift=-0.3ex},->,thick] (1210) -- (2110) node [midway,xshift=-6ex,yshift=0.5ex] {$\alpha^{-1}_{A,B\,\ot\,C,D}$};
		\draw[transform canvas={yshift=-0.3ex,xshift=0.3ex},<-,thick] (1210) -- (2110) node [midway,xshift=6ex,yshift=-0.5ex] {$\alpha_{A,B\,\ot\,C,D}$};
		\draw[transform canvas={yshift=0.4ex,xshift=0.3ex},->,thick]
		(3010) -- (2110) node [midway,yshift=2ex, xshift=5.5ex] {$A\ot \ao_{B,C,D}$};
		\draw[transform canvas={yshift=-0.2ex,xshift=-0.3ex},->,thick]
		(2110) -- (3010) node [midway,yshift=-2ex, xshift=-4.5ex, shorten <=1cm] {$A\ot \aoi_{B,C,D}$};
		\draw[transform canvas={yshift=0.4ex},->,thick]
		(1120) -- (1210) node [midway,yshift=2.5ex] {$\aoi_{A,B,C}\ot D$};
		\draw[transform canvas={yshift=-0.4ex},->,thick]
		(1210) -- (1120) node [midway,yshift=-2.5ex] {$\ao_{A,B,C}\ot D$};
		\draw[transform canvas={yshift=0.2ex,xshift=0.4ex},->,thick]
		(2020) -- (1120) node [midway,xshift=5.5ex,yshift=0.4ex] {$\ao_{A \,\ot\, B,C,D}$};
		\draw[transform canvas={yshift=-0.2ex,xshift=-0.4ex},->,thick]
		(1120) -- (2020) node [midway,xshift=-5.5ex,yshift=-0.4ex] {$\aoi_{A \,\ot\, B,C,D}$};
		\draw[transform canvas={yshift=0.2ex,xshift=-0.4ex},->,thick]
		(2020) -- (3010) node [midway,xshift=-4ex,yshift=2ex] {$\aoi_{A,B,C \,\ot\, D}$};
		\draw[transform canvas={yshift=-0.2ex,xshift=0.4ex},->,thick]
		(3010) -- (2020) node [midway,xshift=4ex,yshift=-2ex] {$\ao_{A,B,C \,\ot\, D}$};
	\end{tikzpicture}
	\caption{The pentagon diagram for a string of objects \(A,B,C,D\)
		in a monoidal category.}
\end{figure}

In a monoidal category, one requires that, for any string of four objects, this diagram commutes in the following sense: the composites of morphisms corresponding to any two parallel directed paths coincide. Two paths are parallel if they have the same initial and terminal vertices. It suffices to impose this condition for a single pair of such paths.

\addtocontents{toc}{\protect\setcounter{tocdepth}{1}}
\subsection{Monoidal coherence}

Analogously to Figure~\ref{fig:ass}, where associativity identities were recorded by adding \(1\)-cells, one now keeps track of pentagonal identities by adding \(2\)-cells. Proceeding in this way, one obtains, for each $n\geq 4$, an \((n-2)\)-dimensional polytope known as the \emph{associahedron} \cite{Ta51,St63,Ha84}. Its vertices correspond to bracketings of \(n\) objects, its edges represent applications of associators and their inverses, and its \(2\)-faces record the pentagon identities and the naturality of the associator. The latter \(2\)-faces have a quadrilateral shape. Figure \ref{fig:associahedron} shows the associahedron for $n=5$.

Mac Lane's coherence theorem asserts that, for any \(n\), the composites of structural isomorphisms corresponding to any two parallel directed paths in the \(1\)-skeleton of the \(n^{th}\)~associahedron coincide, provided the pentagon and naturality axioms hold. In more algebraic terms, it states that the associativity constraints, although isomorphisms rather than identities, introduce no ambiguity. Very informally, all diagrams commute.

\sloppy The coherence theorem is closely related to strictification, the statement that every monoidal category is monoidally equivalent to one in which the associators are identities. A degree of strictness underlies many graphical calculi (e.g. \cite{JoSt91}) and, consequently, skein-theoretic constructions in quantum topology.

We have emphasized the relation of the coherence theorem to polytopes; cf., e.g.,  \cite{CuLaA24}. It is further deeply connected to the geometry of moduli spaces of stable punctured curves \cite{Ka93} and to Thompson's group \(F\) \cite{Deh05}.

\addtocontents{toc}{\protect\setcounter{tocdepth}{1}}

\subsection{Linearly distributive categories}
In many applications, one encounters categories equipped with two tensor products \(\ot\) and \(\partimes\). 

In linear logic, the tensor products correspond to the multiplicative connectives `and' and `or' (e.g. \cite{CS97}). Functional-analytic categories, notably the category of Banach spaces with contractions, can be equipped with the projective and injective tensor products (e.g. \cite{BlZ20}). Further examples are \emph{Grothendieck\kern0.1em--Verdier categories}, also known as \emph{\(\ast\)-autonomous categories}, arising as derived categories of sheaf-theoretic origin \cite{BoD13}, representation categories of vertex operator algebras \cite{ALSW25}, and modules over Hopf algebroids \cite{All23, De26}.

In these settings, the two tensor products are related by two mixed associativity constraints
\begin{equation}\begin{array}{lll}
		\mathcolor{purple}{\distl_{A,B,C}}:\: A\ot(B \prt C) & \color{purple}{\longrightarrow} &{(A \ot B)} \prt {C}\,,\\[0.3em]
		\mathcolor{purple}{\distr_{A,B,C}}:\: {(A \prt B)} \ot C & \mathcolor{purple}{\longrightarrow} &A \prt{(B \ot C)}\,,
	\end{array} \label{eq:distributors}
\end{equation}
more precisely, natural transformations, called \emph{distributors}. These avoid the duplication of objects familiar from the classical distributive law $a\cdot(b+c)=a\cdot b + a\cdot c$. The cost is that, in practice (see, e.g., Example \ref{ex: Abimod2} below), they are often non-invertible: they are directed.

Still, distributors must satisfy coherence conditions. A collection of such pentagon axioms was proposed in \cite{CS97}; the resulting structure is a \emph{linearly distributive category}. We display one such pentagon and refer to Definition \ref{def:laxLD} for the full list of eight pentagons:

\begin{figure}[H]
	\centering
	\begin{tikzpicture}[x=1.3mm,y=1mm,scale=1.53]
		\tikzset{every node/.style={scale=.8}}
		\tikzset{>=stealth}
		\tikzset{mm/.style={execute at begin node=$\displaystyle, execute at end node=$}}
		\draw[mm]
		(-11.5,-5) node (1120) {((A\prt B)\ot C)\prt D}
		(11.5,-5) node (1210) {(A\prt {(B \ot C)})\prt D}
		(0,20) node (3010) {A\prt {(B \ot (C\prt D))}}
		(-16.5,9.5) node (2020) {(A\prt B)\ot(C \prt D)}
		(16.5,9.5) node (2110) {A\prt {((B\ot C)\prt D)}}
		;
		\draw[transform canvas={yshift=0.3ex,xshift=-0.3ex},->,thick] (1210) -- (2110) node [midway,xshift=-6ex,yshift=0.5ex] {$\api_{A,B\,\ot\,C,D}$};
		\draw[transform canvas={yshift=-0.3ex,xshift=0.3ex},<-,thick] (1210) -- (2110) node [midway,xshift=6ex,yshift=-0.5ex] {$\ap_{A,B\,\ot\,C,D}$};
		\draw[->,color=purple,thick]
		(3010) -- (2110) node [midway,yshift=1.5ex, xshift=6.5ex] {$A\prt \distl_{B,C,D}$};
		\draw[->,color=purple,thick]
		(1120) -- (1210) node [midway,yshift=-4ex] {$\distr_{A,B,C}\prt D$};
		\draw[->,color=purple,thick]
		(2020) -- (1120) node [midway,xshift=-6ex,yshift=0.5ex] {$\distl_{A\prt B,C,D}$};
		\draw[->,color=purple,thick]
		(2020) -- (3010) node [midway,xshift=-5ex,yshift=2ex] {$\distr_{A,B,C\prt D}$}
		;
	\end{tikzpicture}
	\caption{A pentagon involving distributors; see Diagram~\eqref{eq:P6}.}.
	\label{fig:pentdis}
\end{figure}
Arrows in Figure \ref{fig:pentdis} whose labels involve distributors are directed. Isomorphisms and their inverses are drawn as double arrows. The pentagon axiom requires that morphisms corresponding to different parallel {\em directed} paths are identical: they can be transformed by a polygonal flip.

\addtocontents{toc}{\protect\setcounter{tocdepth}{1}}

\subsection{Coherence for categories}
For linearly distributive categories, associahedra such as the one in
Figure \ref{fig:associahedron} have two types of edges: some edges are
directed. Coherence can then be stated as follows: for each $n$ and each unbracketed strings of $n$ objects separated by either
$\ot$ or $\prt$, any two parallel directed paths in one of the \(2^{n-1}\) directed associahedra must yield the same morphism (which is, in general, not an isomorphism). Put differently, any such paths are related by polygonal flips.

The study of coherence in linearly distributive categories therefore involves both invertible and non-invertible morphisms, making it considerably more difficult. In this article, we develop elementary techniques for handling such directed arrows.

\begin{figure}[H]
	\centering
	\begin{tikzpicture}[x=1.3mm,y=0.97mm,scale=1.6]
		\tikzset{every node/.style={scale=.8}}
		\tikzset{>=stealth}
		\tikzset{mm/.style={execute at begin node=$\displaystyle, execute at end node=$}}
		\draw[mm] 
		(-11.5,-5) node (1120) {((A(BC))D)E}
		(11.5,-5) node (1210) {(A((BC)D))E}
		(0,20) node (3010) {A((BC)(DE))}
		(0,-24) node (1201) {((AB)(CD))E}
		(-22.5,-2) node (2011) {((AB)C)(DE)}
		(22.5,-2) node (2200) {A((B(CD))E)}
		(-16.5,9.5) node (2020) {(A(BC))(DE)}
		(16.5,9.5) node (2110) {A(((BC)D)E)}
		(-17.5,-16.5) node (1111) {(((AB)C)D)E}
		(17.5,-16.5) node (1300) {(A(B(CD)))E}
		;
		\draw[mm,color=black!50!white]
		(0,13) node (4000) {A(B(C(DE)))}
		(0,-11) node (2101) {(AB)((CD)E)}
		(-6,2) node (3001) {(AB)(C(DE))}
		(6,2) node (3100) {A(B((CD)E))}
		;
		
		\draw[transform canvas={yshift=0.4ex,xshift=0.3ex},->,thick, dashed, color=black!50!white] (2011) -- (3001);
		\draw[transform canvas={yshift=-0.4ex,xshift=0.3ex},<-,thick, dashed, color=black!50!white] (2011) -- (3001);
		\draw[transform canvas={yshift=-0.3ex,xshift=0.4ex},->,thick, dashed, color=black!50!white] (3010) -- (4000);
		\draw[transform canvas={xshift=-0.4ex},<-,thick, dashed, color=black!50!white] (3010) -- (4000);
		
		\draw[transform canvas={yshift=0.3ex,xshift=-0.3ex},->,thick] (1210) -- (2110);
		\draw[transform canvas={yshift=-0.3ex,xshift=0.3ex},<-,thick] (1210) -- (2110);
		\draw[transform canvas={yshift=0.3ex,xshift=0.3ex},->,thick] (1210) -- (1300);
		\draw[transform canvas={yshift=-0.3ex,xshift=-0.3ex},<-,thick] (1210) -- (1300);
		\draw[transform canvas={yshift=0.6ex,xshift=0.1ex},->,thick] (1111) -- (1201);
		\draw[transform canvas={yshift=-0.3ex,xshift=0.1ex},<-,thick] (1111) -- (1201);
		\draw[transform canvas={yshift=0.3ex,xshift=-0.3ex},->,thick] (1300) -- (2200);
		\draw[transform canvas={yshift=-0.3ex,xshift=0.3ex},<-,thick] (1300) -- (2200);
		\draw[transform canvas={yshift=0.3ex,xshift=0.3ex},->,thick] (2110) -- (2200);
		\draw[transform canvas={yshift=-0.3ex,xshift=-0.3ex},<-,thick] (2110) -- (2200);
	
		\path[->,color=purple,thick] 
		(3010) edge (2110)
		(1120) edge (1210)
		(2020) edge (1120)
		(2020) edge (3010)
		(1111) edge (1120)
		(1201) edge (1300)
		(2011) edge (1111)
		(2011) edge (2020)
		;
		\begin{scope}[on background layer]
			\path[->, dashed, color=purple!50!white,thick] 
			(3100) edge (2200)
			(2101) edge (1201)
			(3001) edge (2101)
			(3001) edge (4000)
			(2101) edge (3100)
			(4000) edge (3100)
			;
		\end{scope}
	\end{tikzpicture}
	\caption{The directed associahedron for the unbracketed expression \(A\prt B \ot C \ot D \prt E\).}
	\label{fig:associahedron}
\end{figure}

Our original motivation for studying the coherence of linearly distributive categories and their functors comes from the surface-diagrammatic calculus developed in \cite{DeS25}. The coherence theorems simplify computations in this calculus. For example, one of our main results (Theorem \ref{thm: coherence}) implies that any two surface diagrams with the same top and bottom faces, built only from associators and distributors, are equal. Consequently, consecutive applications of pentagon axioms within a cube can be combined into a single move; for instance, Equations (IX) and (X) in the proof of \cite[Lem.~3.25]{DeS25} now reduce to one step.

\subsection{An instructive example}
We note that the rôles of the two tensor products in the
distributors \eqref{eq:distributors} are not symmetric: in both cases, the target has the form $A\prt B$, i.e., it is a $\partimes$-product. This asymmetry makes the treatment of $\partimes$-products in this paper more involved. In the following example, the differing rôles of the two tensor products
arise naturally:

\begin{example}\label{ex: Abimod2}
	Let \(R\) be a finite-dimensional \(k\)-algebra. The category of \(R\)-bimodules carries a linearly distributive structure: the tensor product \(\ot\) is the balanced tensor product \(\otimes_R\). It is defined as a quotient, a colimit, and thus satisfies a universal property for morphisms {\em out of} it. Any $R$-bimodule is also a bicomodule over the coalgebra $R^*=\operatorname{Hom}_{k}(R,k)$. The second tensor 
	product \(\prt\) is the cotensor product  \(\ot^R\) of bicomodules. It is defined as
	a subspace, a limit, and is thus characterized by a universal property for morphisms {\em into} it. Using these universal properties, one naturally obtains distributors from \(\ot\)\kern0.1em-products into $\prt$-products. Since limits and colimits need not commute, these distributors are typically not invertible.
	For the case of unital \(R\)-bimodules over unital \(R\), see \cite[Ex.~2.10]{DeS25}.
\end{example}

\subsection{Units break coherence}\label{subsec:unitsbreak}
We have not mentioned monoidal units for the two tensor products. Unlike in the monoidal setting, full coherence fails for general linearly distributive categories with units. A counterexample, using Example \ref{ex: Abimod2}, appears in Example \ref{ex:counterex coherence 2}. In linear logic, units are well-known to create complications (e.g. \cite{Gi87,LiWi94,Hou07}). Units are also known to obstruct coherence and strictification in higher categories; as illustrated, for example, by \emph{Simpson's conjecture} \cite{Sim98}.

\subsection{Frobenius linearly distributive functors} 
A second coherence result, as foundational for monoidal categories as Mac Lane's, concerns functors. A strong monoidal functor between monoidal categories $\cC$ and $\cD$ is a functor $F:\cC\to \cD$ equipped with isomorphisms
\begin{equation} 
	\mu_{A,B}: \quad F(A)\ot F(B) \myrightleftarrows{\rule{.5cm}{0cm}}  F(A\ot B) \quad :\mu^{-1}_{A,B} \,,
\end{equation}
satisfying the following hexagonal coherence condition:

\begin{figure}[H]
	\centering
	\begin{tikzpicture}[x=1.3mm,y=1mm,scale=1.4]
		\tikzset{every node/.style={scale=.8}}
		\tikzset{>=stealth}
		\tikzset{mm/.style={execute at begin node=$\displaystyle, execute at end node=$}}
		\draw[mm]
		(-15,-5) node (1120) {F\bigl(A\ot (B\ot C)\bigr)}
		(15,-5) node (1210) {F\bigl((A\ot B)\ot C\bigr)}
		(-15,24) node (3010) {F(A)\ot \bigl(F(B)\ot F(C)\bigr)}
		(-24,9.5) node (2020) {F(A)\ot F(B\ot C)}
		(24,9.5) node (2110) {F(A\ot B)\ot F(C)}
		(15,24) node (h) {\bigl(F(A)\ot F(B)\bigr)\ot F(C)}
		;
		\draw[transform canvas={yshift=0.3ex,xshift=-0.3ex},->,thick] (1210) -- (2110) node [midway,xshift=-6ex,yshift=0.5ex] {$\mu^{-1}_{A \,\ot\, B,C}$};
		\draw[transform canvas={yshift=-0.3ex,xshift=0.3ex},<-,thick] (1210) -- (2110) node [midway,xshift=6ex,yshift=-0.5ex] {$\mu_{A \,\ot\, B,C}$};
		\draw[transform canvas={yshift=0.4ex},->,thick]
		(3010) -- (h) node [midway,yshift=3ex,thick] {$\ao_{F(A),F(B),F(C)}$};
		\draw[transform canvas={yshift=-0.4ex},->,thick]
		(h) -- (3010) node [midway,yshift=-3ex] {$\aoi_{F(A),F(B),F(C)}$};
		\draw[transform canvas={yshift=0.4ex},->,thick]
		(1120) -- (1210) node [midway,yshift=3ex] {$F(\ao_{A,B,C})$};
		\draw[transform canvas={yshift=-0.4ex},->,thick]
		(1210) -- (1120) node [midway,yshift=-3ex] {$F(\aoi_{A,B,C})$};
		\draw[transform canvas={yshift=0.2ex,xshift=0.4ex},->,thick]
		(2020) -- (1120) node [midway,xshift=5.5ex,yshift=0.4ex] {$\mu_{A,B \,\ot\, C}$};
		\draw[transform canvas={yshift=-0.2ex,xshift=-0.4ex},->,thick]
		(1120) -- (2020) node [midway,xshift=-5.5ex,yshift=-0.4ex] {$\mu^{-1}_{A,B \,\ot\, C}$};
		\draw[transform canvas={yshift=0.2ex,xshift=-0.4ex},->,thick]
		(2020) -- (3010) node [midway,xshift=-8ex,yshift=1ex] {$F(A) \ot \mu^{-1}_{B,C}$};
		\draw[transform canvas={yshift=-0.2ex,xshift=0.4ex},->,thick]
		(3010) -- (2020) node [midway,xshift=8ex,yshift=-1ex] {$F(A) \ot \mu_{B,C}$};
		\draw[transform canvas={yshift=0.2ex,xshift=0.4ex},->,thick]
		(h) -- (2110) node [midway,xshift=8ex,yshift=1ex] {$\mu_{A,B} \ot F(C)$};
		\draw[transform canvas={yshift=-0.2ex,xshift=-0.4ex},->,thick]
		(2110) -- (h) node [midway,xshift=-8ex,yshift=-1ex] {$\mu^{-1}_{A,B} \ot F(C)$};
	\end{tikzpicture}
	\caption{The hexagon diagram for a string of objects \(A,B,C\) in a monoidal category \(\cC\) and a strong monoidal functor \(F:\cC \to \cD\).}
\end{figure}

Coherence for these functors asserts that all parallel morphisms built from $\mu$ and $\mu^{-1}$, the associators of $\cC$ and \(\cD\), and their inverses coincide \cite{Po89}; see also \cite{Ep66,MaPo22,GuJo25}.

Functorial coherence can also be phrased in terms of paths on polytopes; we now explain this for the class of functors relevant to linearly distributive categories:
A \emph{Frobenius linearly distributive functor} between linearly distributive categories \(\cC\) and \(\cD\) is a functor \(F\colon \!\cC \!\!\to\!\cD\) equipped with natural transformations
\begin{equation}
	\begin{array}{rcl}
	\mathcolor{purple}{\mu_{A, B}}\colon \quad F(A)\ot F(B) & \mathcolor{purple}{\slongrightarrow} &F(A\ot B) \,,\\[0.3em]
	\mathcolor{purple}{\Delta_{A, B}}\colon \quad F(A\partimes B) & \mathcolor{purple}{\slongrightarrow} &F(A)\partimes F(B)\,,
	\end{array}
\end{equation}
not assumed invertible, and satisfying the four hexagon diagrams displayed in Definition \ref{def: Vorsicht Funktor}. As before, these diagrams are directed. Units can again obstruct coherence; this already occurs for lax monoidal functors \cite{Le72}.

Frobenius linearly distributive functors have interesting applications. Applied to the terminal category, they yield linearly distributive Frobenius algebras, for which a rich theory exists \cite{FSSW25b,DeS25}. In monoidal categories, they specialize to Frobenius monoidal functors, recently studied in, e.g., \cite{Ya24, FlLaPo25, CzKQW25, JaYa26, MePou26}. In particular, they lead to relations between graphical calculi \cite{FuSY25}. For linearly distributive categories, a simple example in the setting of Example \ref{ex: Abimod2} is the restriction functor $B\operatorname{-bimod} \to A\operatorname{-bimod}$ along a morphism \(A \to B\) of finite-dimensional algebras; see \cite[Ex.~2.15.(ii)]{DeS25}.

\subsection{Coherence for functors}

Multiplihedra \cite{St70,SaUm04,Fo08,MW10,PP25} play for functors the rôle that associahedra play for categories. Polygonal flips arise from directed pentagons for associators and distributors, directed hexagons for the Frobenius data, and directed naturality squares.
Here, we display one example of such a multiplihedron:

\begin{figure}[H]
	\centering
	\begin{tikzpicture}[x=1.16mm,y=1mm,scale=1.7]
		\tikzset{every node/.style={scale=.8}}
		\tikzset{>=stealth}
		\tikzset{mm/.style={execute at begin node=$\displaystyle, execute at end node=$}}
		\draw[mm] 
		(-11,-8) node (a) {F(A)(F(B)(F(C)F(D)))}
		(12,-12) node (b) {F(A)((F(B)F(C))F(D))}
		(-6,22) node (c) {F(A)F(B(CD))}
		(17,17) node (o) {F(A)F((BC)D)}
		(16,-32) node (d) {((F(A)F(B))F(C))F(D)}
		(-23,-3) node (e) {(F(A)F(B))F(CD)}
		(30,-9) node (f) {(F(A)F(BC))F(D)}
		(-16.5,11) node (g) {F(A)(F(B)F(CD))}
		(21,-1) node (h) {F(A)(F(BC)F(D))}
		(-17,-24) node (i) {(F(A)F(B))(F(C)F(D))}
		(21,-19) node (j) {(F(A)(F(B)F(C)))F(D)}
		(30,23.5) node (p) {F(A((BC)D))}
		(7,28.5) node (q) {F(A(B(CD)))}
		(37.5,16) node (r) {F((A(BC))D)}
		(42.5,-0.5) node (s) {F(A(BC))F(D)}
		(38,-14) node (t) {F((AB)C)F(D)}
		(30,-24) node (u) {(F(AB)F(C))F(D)}
		;
		\draw[mm,color=black!50!white]
		(0,13) node (k) {F((AB)(CD))}
		(-4,-16) node (l) {F(AB)(F(C)F(D))}
		(-9,4) node (m) {F(AB)F(CD)}
		(33.5,4.5) node (n) {F(((AB)C)D)}
		;
		
		 \begin{scope}[on background layer]
		\draw[transform canvas={yshift=-0.3ex,xshift=0.4ex},->,thick, dashed, color=black!50!white] (r) -- (n);
		\draw[transform canvas={xshift=-0.4ex},<-,thick, dashed, color=black!50!white] (r) -- (n);
		\draw[transform canvas={xshift=0.5ex},->,thick, dashed, color=black!50!white] (q) -- (k);
		\draw[transform canvas={xshift=-0.5ex},<-,thick, dashed, color=black!50!white] (q) -- (k);
		\end{scope}
		
		\draw[transform canvas={xshift=0.4ex},->,thick] (s) -- (t);
		\draw[transform canvas={xshift=-0.4ex},<-,thick] (s) -- (t);
		\draw[transform canvas={yshift=-0.3ex,xshift=0.3ex},->,thick] (j) -- (d);
		\draw[transform canvas={yshift=0.3ex,xshift=-0.3ex},<-,thick] (j) -- (d);
		\draw[transform canvas={xshift=-0.4ex},->,thick] (i) -- (a);
		\draw[transform canvas={xshift=0.4ex},<-,thick] (i) -- (a);
		\draw[transform canvas={xshift=0.4ex},->,thick] (g) -- (e);
		\draw[transform canvas={xshift=-0.4ex},<-,thick] (g) -- (e);
		\path[->,color=purple,thick] 
		(a) edge (b)
		(g) edge (a)
		(g) edge (c)
		(j) edge (f)
		(e) edge (i)
		(c) edge (o)
		(o) edge (h)
		(c) edge (q)
		(q) edge (p)
		(o) edge (p)
		(p) edge (r)
		(r) edge (s)
		(f) edge (s)
		(b) edge (j)
		(u) edge (t)
		(d) edge (u)
		(b) edge (h)
		(h) edge (f)
		(i) edge (d)
		;
		 \begin{scope}[on background layer]
		\path[->, dashed, color=purple!50!white,thick] 
		(n) edge (t)
		(l) edge (u)
		(m) edge (l)
		(m) edge (k)
		(e) edge (m)
		(k) edge (n)
		(i) edge (l)
		;
		\end{scope}
	\end{tikzpicture}
	\caption{The directed multiplihedron for the unbracketed expression \(F(A) \ot F(B) \ot F(C) \prt F(D)\).}
\end{figure}

Coherence for Frobenius linearly distributive functors then asserts that any two parallel directed paths in any directed multiplihedron can be transformed by polygonal flips. The techniques developed in this article are strong enough to show such a coherence result.

\subsection{Summary and main results} 
In the rest of this introduction, we summarize the structure of the article and
its main results. 

In Section \ref{sec:prelim} we review the notions
sketched in this introduction. Since units can break coherence, we discard 
the units and their constraints entirely and define a \emph{unitless linearly 
	distributive category} in Definition~\ref{def: unitless LD-category}. 
In the same section, we also describe specific models for free linearly distributive
categories (and some related classes of categories). They have
the property to be tidy, a notion introduced in Definitions \ref{def:montidy} and
\ref{def:LDtidy}. 

In Section \ref{sec:tidyanalysis}, we analyse morphisms in tidy linearly distributive categories. The reader should in particular take notice of the Definition
\ref{def:ot-analysable} of \(\ff\)-analysable morphisms, where  \(\ff\) is a set of morphisms whose targets are \(\ot\)\kern0.1em-products. In Definition \ref{def:par-analysable}, a similar notion of analysability is introduced for a set of morphisms whose targets are \(\prt\)-products.

In Section \ref{sec:normform}, we introduce the notion of an elementary
morphism; in our model of a free linearly distributive category, all morphisms are elementary.
The main result of this section is Proposition \ref{prop:normal cat} which gives a normal form for elementary morphisms in a tidy linearly distributive category. Section \ref{subsec:normFrob}
develops similar results for Frobenius linearly distributive functors and, in particular, applies them to linearly distributive Frobenius algebras.

\medskip

With these results, we derive in Section \ref{sec:cohthms} our two main coherence
results. To state the first result concisely, recall that
a \emph{formal diagram} is, informally, a diagram built from associators and distributors (that is, from rebracketing operations) without 
relying on accidental coincidences between objects; see 
Definition~\ref{def: formal diagram} for a precise formulation. 
   
Call an object \(Y\) of a category
\emph{thin} if for every object \(X\) there is at most one morphism from \(X\) to \(Y\). A category is \emph{thin} if all of its objects are thin.

\medskip

Our first result is a coherence theorem for unitless linearly distributive categories:
\begin{cohthm}[Theorem~\ref{thm: coherence}]
	The free unitless linearly distributive category on a set is thin. Equivalently, every formal diagram in a unitless linearly distributive category commutes. This also holds for unitless linearly distributive categories with not necessarily invertible associators.
\end{cohthm}
In short, only the units can obstruct full coherence. This shows that the usual eight pentagon coherence axioms for associators and distributors in the definition of a linearly distributive category are complete.

\medskip

Our second result is a coherence theorem for Frobenius linearly distributive functors. It relies on the notion of \(F\)-formal diagrams (Definition \ref{def: F-formal diagram}).

\begin{funthm}[Theorem~\ref{thm: funcoherence}]
	The target category of the free unitless Frobenius linearly distributive functor on a set is thin. Equivalently, given a unitless Frobenius linearly distributive functor \(F\), every \(F\)-formal diagram commutes.
\end{funthm}

The following corollary concerns unitless linearly distributive Frobenius algebras.

\begin{spider}[Corollary~\ref{cor:spider}]
	Let \(A\) be a unitless linearly distributive Frobenius algebra, and let \(m,n\) be positive integers. Every morphism \(f\colon A^{\ot m}\to A^{\prt n}\) built from finitely many multiplications \(\mu\), comultiplications \(\Delta\) and identities using the operations \(\circ\), \(\ot\), and \(\prt\), is equal to the following normal form:
	\begin{equation}
		f \eq \Delta^{(n)} \circ \cdots\circ \Delta^{(1)} \circ \mu^{(1)} \circ \cdots\circ \mu^{(m)} \,.
	\end{equation}
	Here, \(A^{\ot 1}=A^{\prt 1}=A\), \(A^{\ot k}=A \ot A^{\ot k-1}\), and \(A^{\prt k}=A \prt A^{\prt k-1}\) for \(k>1\). Moreover, \(\mu^{(1)}=\Delta^{(1)}=\id\), \(\mu^{(2)}=\mu\), \(\Delta^{(2)}=\Delta\), and for \(k>2\), \(\mu^{(k)}=A\ot \mu^{(k-1)}\), \(\Delta^{(k)}=A\prt \Delta^{(k-1)}\). 
\end{spider}

Note that, unlike for Frobenius algebras in monoidal categories, we do not impose that the string diagram obtained from \(f\) is connected; in fact, it is necessarily connected. 

\addtocontents{toc}{\protect\setcounter{tocdepth}{2}}

\newpage
\section{Preliminaries}\label{sec:prelim}

\subsection{Unitless linearly distributive (LD) categories and their functors}

We recall some notions: Unless otherwise specified, all monoidal structures are unitless. Monoidal categories need not have a unit object, and functors need not preserve units. By default, we omit the adjective \emph{unitless} and refer to the unitless variants without further qualification.

\begin{definition}[Cf. \cite{CS97,Bla23}]\label{def:laxLD}
    A \emph{lax linearly distributive (LD) category} consists of a category~\(\cC\), two bifunctors
    \begin{align}
        \ot,\partimes\colon\; \cC \tim \cC &\;\slongrightarrow\; \cC \,,
    \end{align}
    two (not necessarily invertible) natural transformations 
    \begin{align}
        \ao\colon\; {\ot} \circ (\idC \tim \ot) &\;\slongrightarrow\; {\ot} \circ (\ot \tim \idC)\,,\\
        \ap\colon\; {\partimes} \circ (\idC \tim \partimes) &\;\slongrightarrow\; {\partimes} \circ ({\partimes} \tim \idC)\,,
    \end{align}
    called \emph{associators}, and two (not necessarily invertible) natural transformations 
    \begin{align}\label{eq:distl}
        \distl\colon\; {\ot} \circ (\idC \tim {\partimes}) &\;\slongrightarrow\; {\partimes} \circ ({\ot} \tim \idC)\,,\\ \label{eq:distr}
        \distr\colon\; {\ot} \circ ({\partimes} \tim \idC) &\;\slongrightarrow\; {\partimes} \circ ({\idC} \tim {\ot})\,,
    \end{align}
    called \emph{distributors}. The associators and distributors are required to satisfy the following eight pentagon coherence axioms, for all objects \(A,B,C,D\in \cC\):

\begingroup
\renewcommand{\theequation}{\textup{P1}}
\refstepcounter{equation}\label{eq:P1}
\begin{equation*}
	\begin{tikzcd}[column sep=-1cm, row sep=0.5cm]
		A\ot (B\ot (C\ot D)) && (A\ot B)\ot (C\ot D) && ((A\ot B)\ot C)\otimes D \\
		&& \text{(P1)} && \\
		& A\ot ((B\ot C)\ot D) && (A\ot (B\ot C))\ot D &
		\arrow[purple,"\ao", from=1-1, to=1-3]
		\arrow[purple,"\ao", from=1-3, to=1-5]
		\arrow[purple,"\ao\,\ot\,\id", from=3-4, to=1-5]
		\arrow[purple,"\id\,\ot\,\ao", from=1-1, to=3-2]
		\arrow[purple,"\ao", from=3-2, to=3-4]
	\end{tikzcd}
\end{equation*}
\endgroup

\vspace{0.5cm}

\begingroup
\renewcommand{\theequation}{\textup{P2}}
\refstepcounter{equation}\label{eq:P2}
\begin{equation*}
\begin{tikzcd}[column sep=-1cm, row sep=0.5cm]
	A\ot (B\ot (C\partimes D)) && (A\ot B)\ot (C\prt D) && ((A\ot B)\ot C)\prt D \\
	&& \text{(P2)} && \\
	& A\ot ((B\ot C)\prt D) && (A\ot (B\ot C))\prt D &
	\arrow[purple,"\ao", from=1-1, to=1-3]
	\arrow[purple,"\distl", from=1-3, to=1-5]
	\arrow[purple,"\id\,\ot\,\distl", from=1-1, to=3-2]
	\arrow[purple,"\distl", from=3-2, to=3-4]
	\arrow[purple,"\ao\prt\id", from=3-4, to=1-5]
\end{tikzcd}
\end{equation*}
\endgroup

\vspace{0.5cm}

\begingroup
\renewcommand{\theequation}{\textup{P3}}
\refstepcounter{equation}\label{eq:P3}
\begin{equation*}
	\begin{tikzcd}[column sep=-1cm, row sep=0.5cm]
		A\ot ((B\prt C)\ot D) && A\ot (B\prt {(C\ot D)}) && (A\ot B)\prt {(C\ot D)}\\
		&& \text{(P3)} && \\
		& (A\ot (B\prt C))\ot D && ((A\ot B)\prt C)\ot D &
		\arrow[purple,"\id\,\ot\,\distr", from=1-1, to=1-3]
		\arrow[purple,"\distl", from=1-3, to=1-5]
		\arrow[purple,"\ao", from=1-1, to=3-2]
		\arrow[purple,"\distl\,\ot\,\id", from=3-2, to=3-4]
		\arrow[purple,"\distr", from=3-4, to=1-5]
	\end{tikzcd}
\end{equation*}
\endgroup

\vspace{0.5cm}

\begingroup
\renewcommand{\theequation}{\textup{P4}}
\refstepcounter{equation}\label{eq:P4}
\begin{equation*}
	\begin{tikzcd}[column sep=-1cm, row sep=0.5cm]
		A\ot (B\prt {(C\prt D)}) && (A\ot B)\prt {(C\prt D)} && ((A\ot B)\prt C)\prt D \\
		&& \text{(P4)} && \\
		& A\ot ((B\prt C)\prt D) && (A\ot (B\prt C))\prt D &
		\arrow[purple,"\distl", from=1-1, to=1-3]
		\arrow[purple,"\ap", from=1-3, to=1-5]
		\arrow[purple,"\id\,\ot\,\ap", from=1-1, to=3-2]
		\arrow[purple,"\distl", from=3-2, to=3-4]
		\arrow[purple,"\distl\prt\id", from=3-4, to=1-5]
	\end{tikzcd}
\end{equation*}
\endgroup

\vspace{0.5cm}

\begingroup
\renewcommand{\theequation}{\textup{P5}}
\refstepcounter{equation}\label{eq:P5}
\begin{equation*}
	\begin{tikzcd}[column sep=-1cm, row sep=0.5cm]
		(A\prt B)\ot (C\ot D) && A\prt {(B\ot (C\ot D))} && A\prt {((B\ot C)\ot D)} \\
		&& \text{(P5)} && \\
		& ((A\prt B)\ot C)\ot D && (A\prt {(B\ot C)})\ot D &
		\arrow[purple,"\distr", from=1-1, to=1-3]
		\arrow[purple,"\id\prt\ao", from=1-3, to=1-5]
		\arrow[purple,"\ao", from=1-1, to=3-2]
		\arrow[purple,"\distr\,\ot\,\id", from=3-2, to=3-4]
		\arrow[purple,"\distr", from=3-4, to=1-5]
	\end{tikzcd}
\end{equation*}
\endgroup

\vspace{0.5cm}

\begingroup
\renewcommand{\theequation}{\textup{P6}}
\refstepcounter{equation}\label{eq:P6}
\begin{equation*}
	\begin{tikzcd}[column sep=-1cm, row sep=0.5cm]
		(A\prt B)\ot (C\prt D) && ((A\prt B)\ot C)\prt D && (A\prt {(B\ot C)})\prt D \\
		&& \text{(P6)} && \\
		&  A\prt {(B\ot (C\prt D))}  && A\prt {((B\ot C)\prt D)} &
		\arrow[purple,"\distl", from=1-1, to=1-3]
		\arrow[purple,"\distr\prt\id", from=1-3, to=1-5]
		\arrow[purple,"\distr", from=1-1, to=3-2]
		\arrow[purple,"\id\prt\distl", from=3-2, to=3-4]
		\arrow[purple,"\ap", from=3-4, to=1-5]
	\end{tikzcd}
\end{equation*}
\endgroup

\vspace{0.5cm}

\begingroup
\renewcommand{\theequation}{\textup{P7}}
\refstepcounter{equation}\label{eq:P7}
\begin{equation*}
	\begin{tikzcd}[column sep=-1cm, row sep=0.5cm]
		(A\prt {(B\prt C)})\ot D && ((A\prt B)\prt C)\ot D && (A\prt B)\prt {(C\ot D)} \\
		&& \text{(P7)} && \\
		& A\prt {((B\prt C)\ot D)} && A\prt {(B\prt {(C\ot D)})} &
		\arrow[purple,"\ap\,\ot\,\id", from=1-1, to=1-3]
		\arrow[purple,"\distr", from=1-3, to=1-5]
		\arrow[purple,"\distr", from=1-1, to=3-2]
		\arrow[purple,"\id\prt\distr", from=3-2, to=3-4]
		\arrow[purple,"\ap", from=3-4, to=1-5]
	\end{tikzcd}
\end{equation*}
\endgroup

\vspace{0.5cm}

\begingroup
\renewcommand{\theequation}{\textup{P8}}
\refstepcounter{equation}\label{eq:P8}
\begin{equation*}
	\begin{tikzcd}[column sep=-1cm, row sep=0.5cm]
		A\prt {(B\prt {(C\prt D)})} && (A\prt B)\prt {(C\prt D)} && ({(A\prt B)}\prt C)\prt D \\
		&& \text{(P8)} && \\
		& A\prt {({(B\prt C)}\prt D)} && {(A\prt {(B\prt C)})}\prt D &
		\arrow[purple,"\ap", from=1-1, to=1-3]
		\arrow[purple,"\ap", from=1-3, to=1-5]
		\arrow[purple,"\id\prt\ap", from=1-1, to=3-2]
		\arrow[purple,"\ap", from=3-2, to=3-4]
		\arrow[purple,"\ap\prt\id", from=3-4, to=1-5]
	\end{tikzcd}
\end{equation*}
\endgroup
\end{definition}

\begin{definition}[Cf. \cite{CS97}]\label{def: unitless LD-category}
	A lax LD category whose associators are both invertible is called a \emph{linearly distributive (LD) category}.
\end{definition}

\begin{definition}[Cf. \cite{Lap72}]\label{def: unitless lax monoidal category}
	A \emph{lax monoidal category} consists of a category \(\cC\), a bifunctor 
	\begin{align}
		\ot\colon\; \cC \tim \cC &\;\slongrightarrow\; \cC \,,
	\end{align}
	and a (not necessarily invertible) natural transformation 
	\begin{align}
		\ao\colon {\ot} \circ (\idC \tim \ot) &\;\slongrightarrow\; {\ot} \circ {({\ot} \tim {\idC})}\,,
	\end{align}
	satisfying the pentagon coherence condition \eqref{eq:P1}, for all objects \(A,B,C,D\in \cC\).
\end{definition}

\begin{definition}[Cf. \cite{CS99}]\label{def: Vorsicht Funktor}
A \emph{Frobenius linearly distributive (LD) functor} between lax LD categories is a functor \(F\colon\cC \!\!\to \cD\), together with natural transformations
\begin{align*}
	\mu_{A, B}\colon\; F(A)\ot F(B) & \;\slongrightarrow\; F(A\ot B)\,, \\
	\Delta_{A, B}\colon\; F(A\partimes B) & \;\slongrightarrow\; F(A)\partimes F(B)\,,
\end{align*}
such that, for all objects \(A, B, C\in\cC\), the following four hexagon diagrams commute.

\begingroup
\renewcommand{\theequation}{\textup{H1}}
\refstepcounter{equation}\label{eq:H1}
\begin{equation*}
\hspace{-1cm}
\begin{tikzcd}[column sep=1cm, row sep=0.3cm]
F(A)\ot\bigl(F(B)\ot F(C)\bigr) & \bigl(F(A)\ot F(B)\bigr)\ot F(C) && F(A\ot B)\ot F(C) \\
& \qquad\;\text{(H1)} && \\
F(A)\ot F(B\ot C) & F\bigl(A\ot (B\ot C)\bigr) && F\bigl((A\ot B)\ot C\bigr)
\arrow[purple,"\ao", from=1-1, to=1-2]
\arrow[purple,"\mu\,\ot\,\id", from=1-2, to=1-4]
\arrow[purple,"\id\,\ot\,\mu", from=1-1, to=3-1]
\arrow[purple,"\mu", from=1-4, to=3-4]
\arrow[purple,"\mu", from=3-1, to=3-2]
\arrow[purple,"F(\ao)", from=3-2, to=3-4]
\end{tikzcd}
\end{equation*}
\endgroup

\vspace{0.4cm}

\begingroup
\renewcommand{\theequation}{\textup{H2}}
\refstepcounter{equation}\label{eq:H2}
\begin{equation*}
\begin{tikzcd}[column sep=1cm, row sep=0.3cm]
F(A)\ot F(B\prt C) && F(A)\ot\bigl(F(B) \prt F(C)\bigr) & \bigl(F(A)\ot F(B)\bigr)\prt F(C) \\
&& \text{(H2)} & \\
F\bigl(A\ot (B\prt C)\bigr) && F\bigl((A\ot B)\prt C)\bigr) & F(A\ot B)\prt F(C)
\arrow[purple,"\id\,\ot\,\Delta", from=1-1, to=1-3]
\arrow[purple,"\distl", from=1-3, to=1-4]
\arrow[purple,"\mu", from=1-1, to=3-1]
\arrow[purple,"\mu\prt\id", from=1-4, to=3-4]
\arrow[purple,"F(\distl)", from=3-1, to=3-3]
\arrow[purple,"\Delta", from=3-3, to=3-4]
\end{tikzcd}
\end{equation*}
\endgroup

\vspace{0.4cm}

\begingroup
\renewcommand{\theequation}{\textup{H3}}
\refstepcounter{equation}\label{eq:H3}
\begin{equation*}
\begin{tikzcd}[column sep=1cm, row sep=0.3cm]
F(A \prt B)\ot F(C) && \bigl(F(A)\prt F(B)\bigr)\ot F(C) & F(A) \prt {\bigl(F(B)\ot F(C)\bigr)} \\
&& \;\text{(H3)} & \\
F\bigl((A\prt B)\ot C\bigr) && F\bigl(A \prt {(B\ot C)}\bigr) & F(A)\prt F(B\ot C)
\arrow[purple,"\Delta\,\ot\,\id", from=1-1, to=1-3]
\arrow[purple,"\distr", from=1-3, to=1-4]
\arrow[purple,"\mu", from=1-1, to=3-1]
\arrow[purple,"\id\prt\mu", from=1-4, to=3-4]
\arrow[purple,"F(\distr)", from=3-1, to=3-3]
\arrow[purple,"\Delta", from=3-3, to=3-4]
\end{tikzcd}
\end{equation*}
\endgroup

\vspace{0.4cm}

\begingroup
\renewcommand{\theequation}{\textup{H4}}
\refstepcounter{equation}\label{eq:H4}
\begin{equation*} 
\begin{tikzcd}[column sep=1cm, row sep=0.3cm]
F\bigl(A \prt {(B \prt C)}\bigr) && F(A)\prt F(B\prt C) && F(A)\prt {\bigl(F(B)\prt F(C)\bigr)} \\
&& \qquad \quad\, \text{(H4)} && \\
F\bigl((A \prt B) \prt C\bigr) && F(A\prt B)\prt F(C) && \bigl(F(A) \prt F(B)\bigr)\prt F(C)
\arrow[purple,"\Delta", from=1-1, to=1-3]
\arrow[purple,"\id\prt \Delta", from=1-3, to=1-5]
\arrow[purple,"F(\ap)", from=1-1, to=3-1]
\arrow[purple,"\ap", from=1-5, to=3-5]
\arrow[purple,"\Delta", from=3-1, to=3-3]
\arrow[purple,"\Delta\prt\id", from=3-3, to=3-5]
\end{tikzcd}
\end{equation*}
\endgroup
\end{definition}

\begin{definition}
	A Frobenius linearly distributive (LD) functor is called \emph{strict} if its coherence morphisms $\mu$ and $\Delta$ are identities.
\end{definition}

\subsection{Freeness, tidiness, and formality}
In this subsection, we define free LD categories and free Frobenius LD functors on a set. We construct explicit models of these universal structures which have desirable formal properties. Specifically, they are \emph{tidy}, a notion that we introduce in Definitions \ref{def:montidy}, \ref{def:LDtidy} and \ref{def: tidy functor}. We then define formal diagrams, which will be used in the formulation of the coherence theorems.

\subsubsection{Freeness}
Write \(\LD\) for the category of small LD categories with strict Frobenius LD functors. Let \(U\colon \LD\to \ms{Set}\) be the forgetful functor that takes each small LD category to its set of objects and each strict Frobenius LD functor to its induced function on objects.

\begin{definition}\label{def: free LD}
Let \(\fS\) be a set. The \emph{free LD category} on \(\fS\) consists of an LD category \(\Free\) together with a function \(\eta_S\colon \fS \to U(\Free)\) such that the following universal property holds: for every LD category \(\cC\) and every function \(f\colon \fS \to U(\cC)\), there exists a unique strict Frobenius LD functor \(G\colon \Free \to \cC\) satisfying 
\begin{equation}
f \eq U(G)\circ \eta_S.
\end{equation}
\end{definition}

\begin{remark}
Free lax monoidal and free lax LD categories on \(\fS\) are defined analogously.
\end{remark}

The following construction yields a concrete model of the free LD category \(\Free\) on \(\fS\).
\begin{construction}\label{con: free LD cat}
	Its set of objects is defined recursively by declaring \(\fS\subseteq \operatorname{Ob}(\Free)\) and, for any \(A,B\in \operatorname{Ob}(\Free)\), requiring that \(A\ot B \in \operatorname{Ob}(\Free)\) and \(A\partimes B \in \operatorname{Ob}(\Free)\).
	
	The morphisms of \(\Free\) are defined as follows:
	\begin{enumerate}[label=\rmlabel]
		\item For each object \(A\in \operatorname{Ob}(\Free)\), we include 
		a morphism \(\id_A\colon A \to A\). 
		\item For all objects \(A,B,C\in \operatorname{Ob}(\Free)\), we include 
		morphisms \(\ao_{A,B,C}\), \(\ap_{A,B,C}\), \(\aoi_{A,B,C}\), 
		\(\api_{A,B,C}\), \(\distl_{A,B,C}\) and \(\distr_{A,B,C}\), 
		each with its evident source and target. 
		\item Given morphisms \(f\colon A \to B\) and \(g\colon B \to C\), we include a morphism \(g\circ f\colon A \to C\). 
		\item Given morphisms \(f\colon A \to C\) and \(g\colon B \to D\), we include two morphisms\\ \(f\otimes g\colon A \otimes B \to C \otimes D\) 
		and \(f\partimes g\colon A \partimes B \to C \partimes D\). 
	\end{enumerate}
	We impose only the relations required to endow \(\Free\) with the structure 
	of a LD category, and no others. Concretely, we quotient by the relations expressing
	\begin{enumerate}[resume,label=\rmlabel]
		\item that \((\Free,\circ,\id)\) is a category, 
		\item that \(\ot\) and \(\partimes\) define bifunctors, 
		\item and that the families \(\ao\), \(\ap\), \(\distl\), and \(\distr\) 
		are natural. 
	\end{enumerate} 
	Further, we impose 
	\begin{enumerate}[resume,label=\rmlabel]
		\item that \(\ao\) and \(\aoi\) are mutually inverse, as are \(\ap\) 
		and \(\api\), 
		\item and the pentagon coherence 
		axioms~\eqref{eq:P1}\,--\,\eqref{eq:P8} listed in 
		Definition \ref{def:laxLD}.
	\end{enumerate}
	The LD category \(\Free\), together with the evident inclusion \(\eta_S\colon \fS\hookrightarrow \operatorname{Ob}(\Free)\), satisfies the universal property of Definition~\ref{def: free LD}.
\end{construction}

\begin{remark}
Free lax monoidal and free lax LD categories on~\(\fS\) are constructed analogously.
\end{remark}

Next, we consider functors. Write \(\uFrob\) for the category whose objects are Frobenius LD functors between small LD categories, and whose morphisms \(F \to G\) between Frobenius LD functors \(F\colon \cC \to \cD\) and \(G\colon \cE \to \cF\) are pairs \((H,I)\) of strict Frobenius LD functors \(H\colon \cC \to \cE\) and \(I\colon \cD \to \cF\) such that \(I \circ F=G\circ H\); compare \cite[\S 7]{MaPo22}.

Let \(U\colon \uFrob \to \ms{Set}\) be the functor sending a Frobenius LD functor to the set of objects of its source category and a morphism \((H,I)\colon F\to G\) to the object function underlying \(H\).
\begin{definition}\label{def: free Frob}
	Let \(\fS\) be a set. The \emph{free Frobenius LD functor} on \(\fS\) consists of a Frobenius LD functor \(\FreeF\) together with a function \(\eta_S\colon S \to U(\FreeF)\) such that the following universal property holds: for every Frobenius LD functor \(G\colon \cC\to \cD\) and every function \(f\colon \fS \to U(G)\), there exists a unique morphism \((H,I)\colon \FreeF \to G\) in \(\uFrob\) satisfying 
	\begin{equation}
		f \eq U(H,I)\circ \eta_S.
	\end{equation}
\end{definition}

\begin{remark}
	Free lax monoidal functors on \(\fS\) are defined analogously.
\end{remark}

The following construction yields a model of the free Frobenius LD functor \(F_S\) on a set \(S\).

\begin{construction}\label{con: free Frob LD fun}
	Let \(\cC=\Free\) be the free LD category on \(S\), presented in Construction~\ref{con: free LD cat}.
	\begin{enumerate}[label=\rmlabel]
		\item\label{it: con T} Let \(T := \{ F(X) \,\colon\, X \in \cC \}\) be the set of formal symbols, indexed by the objects \(X\in\cC\).
		\item Let \(\cD^-\) be the free LD category on the set \(T\), presented in Construction~\ref{con: free LD cat}. 
		\item Add to \(\cD^-\), for any morphism \(f\colon X \to Y\) of \(\cC\), a new morphism 
		\begin{equation}
		F(f)\colon\; F(X)\slongrightarrow F(Y)\,,
		\end{equation} 
	 	and, for any two objects \(X, Y\in \cC\), two new morphisms 
		\begin{align}
			\mu_{X, Y}\colon\; F(X)\ot F(Y) &\slongrightarrow F(X\ot Y) \,, \\
			\Delta_{X, Y}\colon\; F(X\prt Y) &\slongrightarrow F(X)\prt F(Y) \,.
		\end{align}
		\item Close this new collection of morphisms
		under composition and both tensor products. 
		\item Define \(\cD\) as the quotient of this category by the smallest congruence such that \(\ot\) and \(\prt\) become bifunctors, the families
		\(\ao\), \(\aoi\), \(\distl\), \(\distr\), \(\ap\), \(\api\), \(\mu\) and \(\Delta\) become natural, the assignment
		\(F\) becomes a functor, and the hexagon axioms~\eqref{eq:H1}\,--\,\eqref{eq:H4} are satisfied. 
	\end{enumerate}
	This Frobenius LD functor \(F\colon \!\cC \!\to \cD\), with the inclusion \(\eta_S\colon \fS\hookrightarrow \operatorname{Ob}(\Free)=\operatorname{Ob}(\cC)\), satisfies the universal property of Definition~\ref{def: free Frob}.
\end{construction}

\subsubsection{Tidiness}

We introduce a class of lax LD categories and Frobenius LD functors with good formal properties.
\begin{definition}\label{def:montidy}
	A lax monoidal category \(\cC\) is called \emph{tidy} if \(A_1\otimes A_2=B_1\ot B_2\) implies \(A_1=B_1\) and \(A_2=B_2\) for all 
	objects \(A_1, A_2, B_1, B_2\in \cC\).
\end{definition}

\begin{definition}\label{def:LDtidy}
	A lax LD category \(\cC\) is \emph{tidy} if it is tidy 
	with respect to both tensor products and, moreover, \(A_1\otimes A_2\ne B_1\partimes B_2\) for all objects \(A_1, A_2, B_1, B_2\in \cC\).
\end{definition}

\begin{example}
	The free LD category on a set, as in Construction \ref{con: free LD cat}, and the target category of the free Frobenius LD functor on a set, as in Construction \ref{con: free Frob LD fun}, are both tidy. 
\end{example}

We now turn to Frobenius LD functors. We need the following notions.

\begin{definition}
	An object \(A \in \cC\) in a tidy lax LD category \(\cC\) is called an \emph{\(\ot\)-product} if it can be written 
	as \(A=A_1\ot A_2\) for (necessarily unique) objects \(A_1, A_2\in \cC\), and a \emph{\(\partimes\)-product} if it can be written as \(A = A_1 \partimes A_2\).
\end{definition}

\begin{definition}\label{def: tidy functor}
	A Frobenius LD functor \(F\colon \cC \to \cD\) 
	between tidy lax~LD categories is \emph{tidy} if, 
	for every object \(A\in \cC\), the object \(F(A)\in \cD\) is neither an \(\ot\)\kern0.1em-product nor a \(\prt\)-product.
\end{definition}

\begin{example}
	The free Frobenius LD functor on a set, as in Construction \ref{con: free Frob LD fun}, is tidy.
\end{example}

\subsubsection{Formality}

We use the universal properties of free structures to define formal diagrams. 

First, we consider categories. By the universal property in Definition~\ref{def: free LD}, for any LD category \(\cC\), the identity on \(U(\cC)=\operatorname{Ob}(\cC)\) induces a unique strict Frobenius LD functor
\begin{equation}
\varepsilon_{\cC}\colon\; \FreeC \slongrightarrow \cC
\end{equation} 
such that \(A=\big(U(\varepsilon_{\cC})\circ \eta\big)\big(A\big)\) for every \(A\in \cC\). The morphism \(\varepsilon_{\cC}\) is the component at \(\cC\) of the counit for the free-forgetful adjunction between \(\ms{Set}\) and \(\LD\).

\begin{definition}\label{def: formal diagram}
Let \(\cC\) be a LD category. A diagram \(D\colon J\to \cC\) is called \emph{formal} if it factors through \(\varepsilon_{\cC}\), that is, if it lifts to a diagram \(D^{\sharp}\colon J\to \FreeC\) such that \(\varepsilon_{\cC}\circ D^{\sharp}=D\).
\end{definition}

\begin{remark}
	Formal diagrams in lax LD and lax monoidal categories are defined analogously.
\end{remark}

Next, we turn to functors. By the universal property in Definition~\ref{def: free Frob}, for any Frobenius LD functor \(G\colon \cC \to \cD\), the identity function on \(U(G)=\operatorname{Ob}(\cC)\) induces a unique morphism
\begin{equation}
	(\varepsilon^1_{G},\varepsilon^2_{G})\colon\; \FreeG \slongrightarrow G
\end{equation} 
in \(\uFrob\) such that \(A=\big(U(\varepsilon^1_{G},\varepsilon^2_{G})\circ \eta\big)\big(A\big)\) for every \(A\in\cC\). The morphism \((\varepsilon^1_{G},\varepsilon^2_{G})\) is the component at \(G\) of the counit for the free-forgetful adjunction between \(\ms{Set}\) and \(\uFrob\).

\begin{definition}\label{def: F-formal diagram}
	Let \(F\colon \cC \to \cD\) be a Frobenius LD functor between LD categories. A diagram \(D\colon J\to \cD\) is called \emph{\(F\)-formal} if it factors through \(\varepsilon^2_{F}\).
\end{definition}

\section{Morphisms in tidy LD categories}\label{sec:tidyanalysis}
	Throughout this section, we fix a tidy LD category \(\cC\).

\subsection{Morphisms into \texorpdfstring{$\ot$}{ot}-products}\label{subs:morphot}

We adopt the following terminology and notation.
\begin{definition}
	\begin{enumerate}[label=\rmlabel]
	\item The \emph{free monoid} $\fM$ on a set \(M\) consists of all finite sequences (or \emph{words}) of elements of $M$, with concatenation as multiplication and the empty word $\e$ as the unit. By convention, we concatenate from right to left. Accordingly, for \(v,w\in \fM\), the product \(vw\) is said to \emph{begin} with \(w\) and \emph{end} with \(v\). 
	\item The \emph{length} \(\lvert u \rvert \in \NNez\) of a word \(u \in \fM\) is the number of its terms. 
	\item Elements of $M$ are called \emph{generators}; we identify generators with words of length one.
	\item Each non-empty word \(u\in \fM\) factorises uniquely as \(u=m\init(u)\)
	for a generator \(m\in \fM\) and a word \(\init(u)\in \fM\). We refer to~\(m\) as 
	the \emph{terminal letter} of \(u\) and denote it by \(m=\ter(u)\), 
	while \(\init(u)\) is called the \emph{initial word} of \(u\).
	\end{enumerate}
\end{definition}

Let \(\fY\) be the free monoid with generators \(\ao, \aoi\).

\begin{remark}
	Care must be taken to distinguish the generators \(\ao\) and \(\aoi\) of the monoid \(\fY\) from the coherence morphisms \(\ao\) and \(\aoi\) in the tidy LD category \(\cC\).
\end{remark}

The main construction underlying our analysis of morphisms with target an \(\otimes\)\kern0.1em-product associates to every such morphism \(f\colon H\longrightarrow L\ot R\) a family of natural transformations \((\tau^f_u)_{u\in\fY}\), defined recursively for \(u\in \fY\) by

\begin{equation}\label{eq:tau}
\begin{cases}
    \ta^f_{\e} &\hspace{-12pt}\eq f\,, \\[.1em]
    \ta^f_{\ao u} &\hspace{-12pt}\eq \ao \circ ({\id} \ot {\ta^f_u})\,, \\[.1em]
    \ta^f_{\aoi u} &\hspace{-12pt}\eq \aoi \circ (\ta^f_u \ot \id)\,.
\end{cases}
\end{equation}

For clarity, we specify the source and target functors of the natural transformations \eqref{eq:tau}, using the following terminology and notation:

\begin{definition}
\begin{enumerate}[label=\rmlabel]
	\item For any positive integer \(n\), objects of the Cartesian power \(\cC^n\) are called \emph{vectors} and are written in vector notation \(\vect{A},\vect{B},\vect{C},\ldots\)
\item For any vector \(\vect{A}=(A_1,\ldots, A_n)\in \cC^n\) and \(1\leq k \leq n\), we set 
\begin{equation}
	\vect{A}_{\geq k}\,:=\,(A_k,\ldots, A_n)\in \cC^{n-k+1}
	\qquad \text{and} \qquad
	\vect{A}_{\leq k}\,:=\,(A_1,\ldots, A_k) \in \cC^k.
\end{equation}
\end{enumerate}
\end{definition}

\begin{definition}\label{dfn:2205}
For every morphism \(f\colon H\longrightarrow {L \ot R}\) and every \(u\in \fY\), we define functors 
\begin{equation}
    H^f_u, L^f_u, R^f_u\colon\; \cC^{\lvert u \rvert} \,\slongrightarrow\, \cC
\end{equation}
by recursion on \(|u|\). In the base case, we set

\begin{equation}\label{eq:HLRot0}
	\begin{cases}
		H^f_{\e}(\ast) &\hspace{-9pt}\eq H\,, \\[.3em]
		L^f_{\e}(\ast) &\hspace{-9pt}\eq L\,, \\[.3em]
		R^f_{\e}(\ast) &\hspace{-9pt}\eq R\,,
	\end{cases}
\end{equation}
and in the recursive step, we set
\begin{align}
	\begin{cases}
		H^f_{\ao u}(\vect{A}) &\hspace{-8pt}\eq A_1 \ot H^f_{u}(\vect{A}_{\geq 2})\,, \\[.1em]
		L^f_{\ao u}(\vect{A}) &\hspace{-8pt}\eq A_1 \ot L^f_{u}(\vect{A}_{\geq 2})\,,\\[.1em]
		R^f_{\ao u}(\vect{A}) &\hspace{-8pt}\eq R^f_{u}(\vect{A}_{\geq 2})\,,
	\end{cases}
	&&\begin{cases}
		H^f_{\aoi u}(\vect{A}) &\hspace{-8pt}\eq H^f_{u}(\vect{A}_{\leq \lvert u \rvert}) \ot A_{\lvert u \rvert+1}\,, \\[.1em]
		L^f_{\aoi u}(\vect{A}) &\hspace{-8pt}\eq L^f_{u}(\vect{A}_{\leq \lvert u \rvert})\,,\\[.1em]
		R^f_{\aoi u}(\vect{A}) &\hspace{-8pt}\eq R^f_{u}(\vect{A}_{\leq \lvert u \rvert})\ot A_{\lvert u \rvert+1}\,.
	\end{cases}
\end{align}
\end{definition}

\begin{remark}
The category \(\cC^0\) appearing in the base case~\eqref{eq:HLRot0} is the category with a single object \(\ast\) and a single morphism. A functor \(\cC^0 \to \cC\) therefore corresponds to an object of \(\cC\), and natural transformations between such functors correspond to morphisms in~\(\cC\). 

There is also no ambiguity in the base case~\eqref{eq:HLRot0}: the objects~\(H\),~\(L\), and~\(R\) appearing on the right side are determined by the superscript~\(f\) on the left side.
\end{remark}

The recursion~\eqref{eq:tau} defines a family of natural transformations \(\tau^f_u\colon H^f_u \to L^f_{u} \ot R^f_{u}\), where~\(\ot\) denotes the following \(\ot\)\kern0.1em-product of functors:
\begin{definition}\label{def:pointwise tensor product}
	Given functors \(F,G\colon \cC\to \cD\) between LD categories, the \emph{pointwise \(\ot\)-product} is the composite \(F\ot G\,:=\,\ot \circ (F \tim G)\circ \Delta\), where \(\Delta \colon\, \cC \to \cC^2\) is the diagonal functor.
\end{definition}

The tidiness of \(\cC\) immediately implies the following injectivity property of our 
functors~\(H^f_u\).

\begin{lemma}\label{lemma: H is injective on objects}
    If for a word \(u \in \fY\), two vectors \(\vect{A}, \vect{B}\in\cC^{|u|}\)
    and two morphisms \(f\), \(g\) into \(\ot\)-products we 
    have \(H^f_u(\vect{A})= H^g_u(\vect{B})\), then \(\vect{A}=\vect{B}\) 
    and \(s(f)=s(g)\). \qed
\end{lemma}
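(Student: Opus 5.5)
We argue by induction on the length \(|u|\) of the word \(u\in\fY\), unwinding the recursive definition of \(H^f_u\) one letter at a time; the only input needed is the tidiness of \(\cC\) with respect to \(\ot\) (Definition~\ref{def:montidy}).

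In the base case \(u=\e\), the vectors \(\vect{A},\vect{B}\) lie in \(\cC^0\), so both equal \(\ast\). The hypothesis reads \(H^f_\e(\ast)=H^g_\e(\ast)\). By \eqref{eq:HLRot0} this says that the source of \(f\) equals the source of \(g\), so \(s(f)=s(g)\).

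For the inductive step, write \(u=m\,u'\) with \(m=\ter(u)\in\{\ao,\aoi\}\) and \(u'=\init(u)\); note that the same word \(u\) is used for both \(f\) and \(g\).

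If \(m=\ao\), the definition gives \(A_1\ot H^f_{u'}(\vect{A}_{\geq 2})=B_1\ot H^g_{u'}(\vect{B}_{\geq 2})\). Tidiness with respect to \(\ot\) yields \(A_1=B_1\) and \(H^f_{u'}(\vect{A}_{\geq 2})=H^g_{u'}(\vect{B}_{\geq 2})\). The induction hypothesis applied to the shorter word \(u'\) then gives \(\vect{A}_{\geq 2}=\vect{B}_{\geq 2}\) and \(s(f)=s(g)\). Hence \(\vect{A}=\vect{B}\).

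If \(m=\aoi\), we get \(H^f_{u'}(\vect{A}_{\leq |u'|})\ot A_{|u'|+1}=H^g_{u'}(\vect{B}_{\leq |u'|})\ot B_{|u'|+1}\). Tidiness gives \(A_{|u'|+1}=B_{|u'|+1}\) together with equality of the first factors, and the induction hypothesis again finishes the argument.

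There is no real obstacle here. The only point requiring care is bookkeeping: the indexing of the vector components must match the recursion, with the new letter \(\ao\) adding the first coordinate and \(\aoi\) adding the last, and we must observe that the fixed word \(u\) lets us apply the same case of the recursion on both sides.
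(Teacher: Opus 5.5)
Your proof is correct and takes the same approach as the paper: the paper gives no written proof, saying only that tidiness of \(\cC\) immediately implies the lemma, and your induction on \(|u|\) is that argument spelled out. Your bookkeeping is also right: \(\ao\) peels off the first coordinate and \(\aoi\) the last, matching Definition~\ref{dfn:2205}.
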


The pentagon axiom~\eqref{eq:P1} enters our argument only via the following lemma. 

\begin{lemma}\label{lemma: pushing tau through}
    For every morphism~\(f\) into an \(\ot\)-product, the family of natural 
    transformations~\((\ta^f_u)_{u\in \fY}\) satisfies the relations
        \begin{align}
        \ta^f_{\ao u} \circ \ao 
        &\eq (\ao \ot {\id}) \circ \ta^f_{\ao \ao u}\,,\label{eq: alpha alpha}\\[.2em]
        \ta^f_{\aoi \ao u} \circ \ao 
        &\eq \ta^f_{\ao \aoi u}\,,\label{eq: alphai alpha alpha}\\[.2em]
        \ta^f_{\aoi \aoi u} \circ \ao 
        &\eq ({\id} \ot \ao) \circ \ta^f_{\aoi u}\,\label{eq: alphai alphai alpha}\\[.2em]
        \ta^f_{\aoi u} \circ \aoi 
        &\eq ({\id} \ot \aoi) \circ \ta^f_{\aoi \aoi u}\,,\label{eq: alphai alphai}\\[.2em]
        \ta^f_{\ao \aoi u} \circ \aoi 
        &\eq \ta^f_{\aoi \ao u}\,,\label{eq: alpha alphai alphai}\\[.2em]
        \ta^f_{\ao \ao u} \circ \aoi 
        &\eq (\aoi \ot {\id})\circ \ta^f_{\ao u}\,.\label{eq: alpha alpha alphai}
        \end{align} 
\end{lemma}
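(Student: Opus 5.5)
The plan is to verify each identity by unfolding the recursion~\eqref{eq:tau} one or two steps. Naturality of \(\ao\) then moves the common factor built from \(\ta^f_u\) to the far right on both sides. After that, both sides reduce to composites of associators only, and these agree by the pentagon~\eqref{eq:P1}, possibly rearranged using the invertibility of \(\ao\). The identity at the heart of each check is an equation of natural transformations between functors \(\cC^{|u|+2}\to\cC\), built from the functors of Definition~\ref{dfn:2205}. One must be careful to identify on which objects each component of \(\ao\) is evaluated. For instance, in~\eqref{eq: alpha alpha} the right-hand \(\ao\) on the left side is \(\ao_{A_1,A_2,H^f_u(\vect{A}_{\geq 3})}\), and \(\ta^f_{\ao u}\) is evaluated at the vector \((A_1\ot A_2,\vect{A}_{\geq 3})\). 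Matching these sources and targets is routine via the definitions of \(H^f_u,L^f_u,R^f_u\).

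\emph{The three relations \eqref{eq: alpha alpha}, \eqref{eq: alphai alpha alpha}, \eqref{eq: alphai alphai alpha}.} For~\eqref{eq: alpha alpha}, write \(H=H^f_u\), \(L=L^f_u\) and \(R=R^f_u\). The left side is \(\ao_{A_1\ot A_2,L,R}\circ(\id\ot\ta^f_u)\circ\ao_{A_1,A_2,H}\). Naturality of \(\ao\) in its third variable turns this into
\[
\ao_{A_1\ot A_2,L,R}\circ\ao_{A_1,A_2,L\ot R}\circ\bigl(\id\ot(\id\ot\ta^f_u)\bigr).
\]
The right side unfolds to
\[
(\ao\ot\id)\circ\ao_{A_1,A_2\ot L,R}\circ(\id\ot\ao_{A_2,L,R})\circ\bigl(\id\ot(\id\ot\ta^f_u)\bigr).
\]
The two associator composites coincide by~\eqref{eq:P1}. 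For~\eqref{eq: alphai alpha alpha}, unfolding and pushing \(\ta^f_u\) to the right via naturality reduces the claim to
\[
\aoi\circ(\ao\ot\id)\circ\ao \eq \ao\circ(\id\ot\aoi)
\]
on \(A\ot((L\ot R)\ot C)\)-type objects. This follows from~\eqref{eq:P1} by composing with suitable inverses, which is legitimate because \(\cC\) is an LD category, so \(\ao\) is invertible. Similarly, \eqref{eq: alphai alphai alpha} reduces to \(\aoi\circ(\aoi\ot\id)\circ\ao=(\id\ot\ao)\circ\aoi\), which is again \eqref{eq:P1} solved for different edges.

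\emph{The remaining three relations by inversion.} Relation~\eqref{eq: alphai alphai} is obtained from~\eqref{eq: alphai alphai alpha}: precompose with \(\aoi\) and postcompose with \(\id\ot\aoi\). Likewise \eqref{eq: alpha alphai alphai} follows from~\eqref{eq: alphai alpha alpha}, and \eqref{eq: alpha alpha alphai} follows from~\eqref{eq: alpha alpha}, by the same trick. So only the first three need genuine computation.

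\emph{Main obstacle.} There is no conceptual difficulty. The main risk is bookkeeping: confirming that the components of \(\ao\) line up after reindexing the vectors \(\vect{A}\), and that each rearranged pentagon uses exactly the instance of~\eqref{eq:P1} at the objects \((A,L,R,C)\) or \((A_1,A_2,L,R)\). I would check that step first in~\eqref{eq: alphai alpha alpha}, where \(\ao\) and \(\aoi\) are mixed.
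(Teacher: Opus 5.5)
Your proposal is correct and follows the paper's proof essentially step for step: unfold the recursion~\eqref{eq:tau}, use naturality of \(\ao\) to move the \(\ta^f_u\)-factor to the far right, invoke~\eqref{eq:P1} (rearranged via invertibility of \(\ao\) in the mixed cases), and obtain the last three relations by pre- and postcomposing with inverses. One cosmetic slip in your bookkeeping remark: for the third relation the instance of~\eqref{eq:P1} is at \((L^f_u, R^f_u, A_{|u|+1}, A_{|u|+2})\), not at \((A_1,A_2,L,R)\) or \((A,L,R,C)\) as you list, though the reduced identity you state for that case is right.
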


For the convenience of the reader, we write out the equality of natural transformations~\eqref{eq: alpha alpha} explicitly in components. It reads
\begin{align}\label{eq:in-components}
	\ta^f_{\ao u}(A_1\otimes A_2,\vect{A}_{\geq 3}) 
	\circ 
	\ao_{A_1,A_2,H^f_{u}(\vect{A}_{\geq 3})} 
	&\eq 
	\big(\ao_{A_1,A_2,L^f_u(\vect{A}_{\geq 3})} \ot {R^f_u(\vect{A}_{\geq 3})}\big) 
	\circ 
	\ta^f_{\ao \ao u}(\vect{A})
\end{align}
for all \(\vect{A}\in \cC^{\lvert u \rvert +2}\); the meaning of the five other equations is similar.

We explain the notation used in Equation~\eqref{eq:in-components}:
\begin{remark}
For typographical simplicity, we write \(\ta^f_{\ao \ao u}(\vect{A})\) instead of \((\ta^f_{\ao \ao u})_{\vect{A}}\) for the component of the natural transformation \(\ta^f_{\ao \ao u}\) at \(\vect{A}\), using parentheses instead of subscripts. This notation is employed throughout for the family of natural transformations \(\tau\) and a related family introduced in Subsection \ref{subs:parmor}. As is standard, we sometimes write \({R^f_u(\vect{A}_{\geq 3})}\) instead of $\id_{{R^f_u(\vect{A}_{\geq 3})}}$ for the identity morphism on \({R^f_u(\vect{A}_{\geq 3})}\).
\end{remark}

\begin{proof}
    We prove Equation~\eqref{eq: alpha alpha} as follows:
    \begin{align}
        \ta^f_{\ao u} \circ \ao 
        &\eq \ao \circ (\id \ot \ta^f_{u}) \circ \ao 
        	&\tag{definition \eqref{eq:tau} of \(\ta^f_{\ao u}\)}\\[.2em]
        &\eq \ao \circ \big((\id \ot \id) \ot \ta^f_{u}\big) \circ \ao 
        	&\tag{bifunctoriality of \(\ot\)}\\[.2em]
        &\eq \ao \circ \ao \circ \big(\id \ot (\id \ot \ta^f_{u})\big) 
        	&\tag{naturality of \(\ao\)}\\[.2em]
        &\eq (\ao \ot \id) \circ \ao \circ (\id \ot \ao) 
        	\circ \big(\id \ot (\id \ot \ta^f_{u})\big) 
			&\tag*{\eqref{eq:P1}}\\[.2em]
        &\eq (\ao \ot \id) \circ \ao \circ (\id \ot \ta^f_{\ao u}) 
        	&\tag{definition of \(\ta^f_{\ao u}\)}\\[.2em]
        &\eq (\ao \ot \id) \circ \ta^f_{\ao \ao u}\,. 
        	&\tag{definition of \(\ta^f_{\ao\ao u}\)}
    \end{align}
    Equation~\eqref{eq: alphai alpha alpha} follows similarly:
    \begin{align}
        \ta^f_{\aoi \ao u} \circ \ao 
        &\eq \aoi \circ (\ta^f_{\ao u}\ot \id) \circ \ao 
        	&\tag{definition of \(\ta^f_{\aoi\ao u}\)}\\[.2em]
        &\eq \aoi \circ (\ao \ot \id) \circ \big((\id \ot \ta^f_u) \ot \id\big) \circ \ao 
        	&\tag{definition of \(\ta^f_{\ao u}\)}\\[.2em]
        &\eq \aoi \circ (\ao \ot \id) \circ \ao \circ \big(\id \ot (\ta^f_u \ot \id)\big) 
        	&\tag{naturality of \(\ao\)}\\[.2em]
        &\eq \ao \circ (\id \ot \aoi) \circ \big(\id \ot (\ta^f_u \ot \id)\big) 
        	&\tag*{\eqref{eq:P1}}\\[.2em]
        &\eq \ao \circ (\id \ot \ta^f_{\aoi u}) 
        	&\tag{definition of \(\ta^f_{\aoi u}\)}\\[.2em]
        &\eq \ta^f_{\ao \aoi u}\,. &\tag{definition of \(\ta^f_{\ao \aoi u}\)}
    \end{align}
    Equation~\eqref{eq: alphai alphai alpha} is proved similarly:
    \begin{align}
        \ta^f_{\aoi \aoi u} \circ \ao 
        &\eq \aoi \circ (\ta^f_{\aoi u}\ot \id) \circ \ao 
        	&\tag{definition of \(\ta^f_{\aoi \aoi u}\)}\\[.2em]
        &\eq \aoi \circ (\aoi \ot \id) \circ \big((\ta^f_u \ot \id) \ot \id\big) \circ \ao 
        	&\tag{definition of \(\ta^f_{\aoi u}\)}\\[.2em]
        &\eq \aoi \circ (\aoi \ot \id) \circ \ao \circ \big(\ta^f_u \ot (\id \ot \id)\big) 
        	&\tag{naturality of \(\ao\)}\\[.2em]
        &\eq (\id \ot \ao) \circ \aoi \circ \big(\ta^f_u \ot (\id \ot \id)\big) 
        	&\tag*{\eqref{eq:P1}}\\[.2em]
        &\eq (\id \ot \ao) \circ \aoi \circ (\ta^f_u \ot \id) 
        	&\tag{bifunctoriality of \(\ot\)}\\[.2em]
        &\eq (\id \ot \ao) \circ \ta^f_{\aoi u}\,. 
        	&\tag{definition of \(\ta^f_{\aoi u}\)}
    \end{align}
    
    Equation~\eqref{eq: alphai alphai} is obtained by precomposing and postcomposing 
    Equation~\eqref{eq: alphai alphai alpha} with \(\aoi\) and~\(\id \otimes \aoi\), 
    respectively. Equations~\eqref{eq: alpha alphai alphai} 
    and~\eqref{eq: alpha alpha alphai} follow analogously from 
    Equations~\eqref{eq: alphai alpha alpha} and~\eqref{eq: alpha alpha}.
\end{proof}

\begin{definition}\label{def:ot-analysable}
	Let \(\ff\) be a set of morphisms whose targets are \(\ot\)\kern0.1em-products. A morphism \(g\) in \(\cC\) is \emph{\(\ff\)-analysable} if it can be written 
	as \(g=(g'\ot g'')\circ\tau^f_u(\vect{A})\),
	for some morphisms \(g'\), \(g''\) in~\(\cC\), some morphism \(f\in\ff\), some word \(u\in\fY\), 
	and some vector \(\vect{A}\in\cC^{|u|}\).    
\end{definition}

The next result will be used twice in the next section, each time taking care of a large 
number of cases in a proof by induction.

\begin{lemma}\label{lemma: ot f good}
	 \sloppy Fix a set \(\ff\) of morphisms in \(\cC\) whose targets are \(\ot\)-products. Fix a non-empty word \(u\in\fY\). Let \(f\in \ff\), and let \(\vect{A}\in\cC^{|u|}\). Consider a morphism \(h\) in \(\cC\) with target \(H^f_u(\vect{A})\). Suppose that \(\tau^f_u(\vect{A})\circ h\) is not \(\ff\)-analysable.
	\begin{enumerate}[label=\rmlabel]
		\item\label{it:35a} Then the morphism \(h\) is neither a component of \(\distl\), \(\distr\), 
		\(\ap\) or  \(\api\), nor of the form \(\id\partimes h'\) or \(h'\partimes\id\) for any morphism \(h'\).
	\end{enumerate}
	For the remaining cases,
	\begin{enumerate}[resume,label=\rmlabel]
		\item\label{it:35b} if \(h\) is a component of \(\ao\) or \(\aoi\), then \(u=\aoi\) 
		or \(u=\ao\), respectively.
		\item\label{it:35c} If \(h=\id\ot h'\) for some morphism \(h'\), 
		then \(\ter(u)=\ao\), and the composite \({\tau^f_{\init(u)}(\vect{A}_{\ge 2})\circ h'}\) is not \(\ff\)-analysable. 
		\item\label{it:35d} If \(h={h' \ot \id}\) for some morphism \(h'\), 
		then \(\ter(u)=\aoi\), and the composite \(\tau^f_{\init(u)}(\vect{A}_{\le |u|-1})\circ h'\) is not \(\ff\)-analysable. 
	\end{enumerate}
\end{lemma}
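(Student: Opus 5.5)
We argue by contraposition throughout. In each case where \(h\) has the forbidden shape, we exhibit an explicit decomposition \(\tau^f_u(\vect{A})\circ h=(g'\ot g'')\circ\tau^f_{v}(\vect{B})\) for suitable \(v\), \(\vect{B}\). The starting point is that \(u\) is non-empty, so \(H^f_u(\vect{A})\) is a \(\ot\)-product. Write \(m=\ter(u)\). If \(m=\ao\), then \(H^f_u(\vect{A})=A_1\ot H^f_{\init(u)}(\vect{A}_{\ge2})\) and \(\tau^f_u=\ao\circ(\id\ot\tau^f_{\init(u)})\). If \(m=\aoi\), then \(H^f_u(\vect{A})=H^f_{\init(u)}(\vect{A}_{\le|u|-1})\ot A_{|u|}\) and \(\tau^f_u=\aoi\circ(\tau^f_{\init(u)}\ot\id)\).

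\textbf{Part \ref{it:35a}.} By tidiness, a \(\ot\)-product is never a \(\prt\)-product. The targets of components of \(\distl\), \(\distr\), \(\ap\), \(\api\), and of \(\id\prt h'\) and \(h'\prt\id\), are all \(\prt\)-products. Since \(h\) has target \(H^f_u(\vect{A})\), a \(\ot\)-product, none of these shapes can occur.

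\textbf{Part \ref{it:35b}.} Suppose \(h=\ao_{X,Y,Z}\). Its target \((X\ot Y)\ot Z=H^f_u(\vect{A})\) has a \(\ot\)-product as left factor. If \(m=\ao\), tidiness gives \(A_1=X\ot Y\) and \(H^f_{\init(u)}(\vect{A}_{\ge2})=Z\). We then apply \eqref{eq: alpha alpha} with \(\init(u)\) in place of \(u\), which gives \(\tau^f_u\circ\ao=(\ao\ot\id)\circ\tau^f_{\ao\ao\init(u)}\). This is \(\ff\)-analysable, a contradiction. So \(m=\aoi\), and tidiness gives \(H^f_{\init(u)}(\ldots)=X\ot Y\).

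Now suppose \(\init(u)\neq\e\) and write \(u=\aoi w\). There are two sub-cases.
\begin{itemize}
\item If \(\ter(w)=\ao\), then \(u=\aoi\ao w'\), and \eqref{eq: alphai alpha alpha} gives \(\tau^f_u\circ\ao=\tau^f_{\ao\aoi w'}\), which is analysable with \(g'=g''=\id\).
\item If \(\ter(w)=\aoi\), then \eqref{eq: alphai alphai alpha} gives \(\tau^f_u\circ\ao=(\id\ot\ao)\circ\tau^f_{\aoi w'}\), which is again analysable.
\end{itemize}
Hence \(\init(u)=\e\) and \(u=\aoi\).

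The case \(h=\aoi\) is symmetric: use \eqref{eq: alphai alphai}, \eqref{eq: alpha alphai alphai} and \eqref{eq: alpha alpha alphai} to force \(u=\ao\).

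The delicate point throughout is the component matching. Identities like \eqref{eq: alpha alpha} hold as natural transformations, but we must check that the specific component \(\ao_{X,Y,Z}\) is the one appearing there. This is exactly what tidiness together with Lemma \ref{lemma: H is injective on objects} guarantees: the vector indices are determined, for example \(A_1=X\ot Y\) corresponds to substituting \((X,Y,\ldots)\).

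\textbf{Part \ref{it:35c}.} Let \(h=\id\ot h'\). If \(m=\aoi\), then by bifunctoriality
\[
\tau^f_u\circ h=\aoi\circ\bigl((\tau^f_{\init(u)}\circ h_1)\ot h_2\bigr),
\]
where \(h=h_1\ot h_2\) with \(h_1=\id\) and \(h_2=h'\). Naturality of \(\aoi\) then moves \(h'\) to the right factor: \(\aoi\circ(\tau^f_{\init(u)}\ot h')=(\id\ot(\id\ot h'))\circ\aoi\circ(\tau^f_{\init(u)}\ot\id)\). This is \((\id\ot(\id\ot h'))\circ\tau^f_u\), which is analysable. Hence \(m=\ao\).

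Then \(\tau^f_u\circ(\id\ot h')=\ao\circ(\id\ot(\tau^f_{\init(u)}\circ h'))\). Suppose \(\tau^f_{\init(u)}\circ h'=(g'\ot g'')\circ\tau^f_{v}(\vect{B})\) were analysable. Then
\[
\tau^f_u\circ h=\ao\circ\bigl(\id\ot(g'\ot g'')\bigr)\circ\bigl(\id\ot\tau^f_v\bigr)=\bigl((\id\ot g')\ot g''\bigr)\circ\tau^f_{\ao v}(A_1,\vect{B})
\]
by naturality of \(\ao\), which is analysable, a contradiction. The same argument covers \(\init(u)=\e\): there \(\tau^f_\e=f\), and analysability of \(f\circ h'\) is defined with \(u=\e\) allowed.

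\textbf{Part \ref{it:35d}.} This is the mirror image of \ref{it:35c}, using \(\aoi\), its naturality, and \(\tau^f_{\aoi v}\).

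The main obstacle is the careful bookkeeping in \ref{it:35b}: identifying, via tidiness, exactly which component of each identity in Lemma \ref{lemma: pushing tau through} is being used.
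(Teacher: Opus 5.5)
Your proposal is correct and follows essentially the same route as the paper. Part (i) uses tidiness. Part (ii) uses the six relations of Lemma~\ref{lemma: pushing tau through} to exclude every word except $u=\aoi$ (resp.\ $u=\ao$). Parts (iii)--(iv) use naturality of $\aoi$ (resp.\ $\ao$) together with the recursion defining $\tau^{f'}_{\ao v}$.

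One cosmetic slip: in part (iii), an analysis of $\tau^f_{\init(u)}\circ h'$ may use some other $f'\in\ff$ rather than $f$ itself. The argument goes through verbatim with $\tau^{f'}_{\ao v}$ in place of $\tau^{f}_{\ao v}$.
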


\begin{proof}
	Part~\ref{it:35a} is immediate, since \(t(h)=H^f_u(\vect{A})\) is an \(\ot\)\kern0.1em-product, whereas the morphisms listed in~\ref{it:35a} have target a \(\partimes\)-product. By tidiness of \(\cC\), the claim follows.
	
	\smallskip
	
	For part~\ref{it:35b}, assume that \(h\) is a component of \(\ao\). Since the word \(u\) is non-empty, it ends with either \(\ao\) or \(\aoi\). If \(u\) ended with \(\ao\), \(\aoi\ao\), or \(\aoi\aoi\), then Equations~\eqref{eq: alpha alpha}, \eqref{eq: alphai alpha alpha}, and \eqref{eq: alphai alphai alpha} of Lemma~\ref{lemma: pushing tau through} would imply that \(\tau^f_u(\vect{A})\circ h\) is~\(\ff\)-analysable, contradicting our assumption. Hence \(u=\aoi\). The second claim in part~\ref{it:35b} follows similarly from Equations~\eqref{eq: alphai alphai}, \eqref{eq: alpha alphai alphai}, and \eqref{eq: alpha alpha alphai} in Lemma~\ref{lemma: pushing tau through}.
	
	\smallskip
	
	Proceeding with part~\ref{it:35c}, assume \(h=\id\ot h'\) for some morphism \(h'\)
	and set \(w:=\init(u)\).
	Suppose, for a contradiction, that \(\ter(u)=\aoi\). Omitting component indices, we have
	\begin{align}
		\tau^f_u \circ h 
		&\eq \aoi \circ (\tau^f_w\ot \id)\circ (\id \ot h')
		&\tag{\(u = \aoi w\)}\\[.2em]
		&\eq \aoi \circ \big((\id \ot \id)\ot h'\big) \circ (\tau^f_w \ot \id)
		&\tag{bifunctoriality of \(\ot\)}\\[.2em]
		&\eq \big(\id \ot (\id\ot h')\big) \circ \aoi \circ (\tau^f_w\ot \id)
		&\tag{naturality of \(\aoi\)}\\[.2em]
		&\eq \big(\id \ot (\id\ot h')\big) \circ \tau^f_u \,,
		&\tag{\(u = \aoi w\)}
	\end{align}
 	contradicting the assumption that \(\tau^f_u(\vect{A})\circ h\) is not \(\ff\)-analysable. Therefore \(\ter(u)=\ao\).
	
	Next, suppose for a contradiction that \(\tau^f_w(\vect{A}_{\ge 2})\circ h'\) is \(\ff\)-analysable. Then it can be written as \((g'\ot g'')\circ \tau_v^{f'}(\vect{B})\) for some \(f'\in\ff\), \(v\in\fY\), \(\vect{B}\in\cC^{|v|}\) and morphisms \(g'\), \(g''\). It follows that
	\begin{align*}
		\tau^f_u(\vect{A})\circ h
		&\eq \ao \circ \big(A_1 \ot \tau^f_w(\vect{A}_{\ge 2})\big)\circ (A_1\ot h') 
		&\tag{\(u = \ao w\) and \(h=\id\ot h'\)}\\[.2em]
		&\eq \ao \circ \big(A_1 \ot (\tau^f_w(\vect{A}_{\ge 2}) \circ h')\big)
		&\tag{bifunctoriality of \(\ot\)}\\[.2em]
		&\eq \ao\circ \big(A_1 \ot (g'\ot g'')\big)\circ\bigl(A_1\ot\ta^{f'}_{v}(\vect{B})\bigr) 
		&\tag{\(\ff\)-analysability and bifunctoriality}\\[.2em]
		&\eq \big((A_1 \ot g')\ot g''\big) \circ\ao\circ\bigl(A_1\ot\ta^{f'}_{v}(\vect{B})\bigr)
		&\tag{naturality of \(\ao\)}\\[.2em]
		&\eq \big((A_1 \ot g')\ot g''\big) \circ \ta^{f'}_{\ao v}(A_1, \vect{B}) \,,
		&\tag{definition of \(\ta^{f'}_{\ao v}\)}
	\end{align*}
	again contradicting the assumption that \(\tau^f_u(\vect{A})\circ h\) is not \(\ff\)-analysable.
	
	\smallskip
	
	Part~\ref{it:35d} is proved similarly.
\end{proof}

\subsection{Morphisms into \texorpdfstring{$\partimes$}{par}-products}\label{subs:parmor}
The analysis of morphisms with target a \(\prt\)-product is more involved since, by \eqref{eq:distl} and \eqref{eq:distr},
the target of both distributors is a \(\prt\)-product.

Let \(\fZ\) be the free monoid with generators \(\ap\), \(\api\), \(\distl\),~\(\distr\). For every morphism \(f\colon H\to L\partimes R\) with \(\partimes\)-product target and every \(u\in\fZ\), we define a family of natural transformations \(\bigl(\ka^f_u\bigr)_{u\in\fZ}\) by recursion on \(|u|\) as follows:

\begin{equation}\label{eq:kadef}
\begin{cases}
    \ka^f_{\e} 		 	&\hspace{-11pt}\eq f\,,\\[.1em]
    \ka^f_{\ap u} 	 &\hspace{-11pt}\eq \ap \circ ({\id} \partimes {\ka^f_u})\,,\\[.1em]
    \ka^f_{\api u} 	 &\hspace{-11pt}\eq \api \circ (\ka^f_u \partimes \id)\,,\\[.1em]
    \ka^f_{\distl u} &\hspace{-11pt}\eq \distl \circ (\id \ot \ka^f_u)\,,\\[.1em]
    \ka^f_{\distr u} &\hspace{-11pt}\eq \distr \circ (\ka^f_u \ot \id)\,.
\end{cases}
\end{equation}

For clarity, we specify the source and target functors of the natural transformations \eqref{eq:kadef}.

\begin{definition}\label{def: Hparf}
For every morphism \(f\colon H\longrightarrow L\partimes R\) and every \(u\in \fZ\), we define functors
\begin{equation}
    H^f_u, L^f_u, R^f_u\colon\; \cC^{\lvert u \rvert} \;\slongrightarrow\; \cC
\end{equation}
by recursion on \(|u|\). In the base case, we set
\begin{align}
    &\begin{cases}
        H^f_{\e}(\ast) &\hspace{-8pt}\eq H\,, \\[.1em]
        L^f_{\e}(\ast) &\hspace{-8pt}\eq L\,, \\[.1em]
        R^f_{\e}(\ast) &\hspace{-8pt}\eq R\,,
    \end{cases}
\end{align}
and in the recursive step, we set
\begin{align}
    &\begin{cases}
        H^f_{\ap u}(\vect{A}) &\hspace{-8pt}\eq A_1 \partimes H^f_{u}(\vect{A}_{\geq 2})\,, \\[.1em]
        L^f_{\ap u}(\vect{A}) &\hspace{-8pt}\eq A_1 \partimes L^f_{u}(\vect{A}_{\geq 2})\,,\\[.1em]
        R^f_{\ap u}(\vect{A}) &\hspace{-8pt}\eq R^f_{u}(\vect{A}_{\geq 2})\,,
    \end{cases}
	&&\begin{cases}
        H^f_{\api u}(\vect{A}) 
        &\hspace{-8pt} \eq H^f_{u}(\vect{A}_{\le\lvert u \rvert}) \partimes A_{\lvert u\rvert+1}\,,\\[.1em]
        L^f_{\api u}(\vect{A})
        &\hspace{-8pt} \eq L^f_{u}(\vect{A}_{\leq \lvert u \rvert})\,,\\[.1em]
        R^f_{\api u}(\vect{A}) 
        &\hspace{-8pt} \eq R^f_{u}(\vect{A}_{\leq \lvert u \rvert})\partimes A_{\lvert u \rvert+1}\,,
    \end{cases}
	\\[0.5cm]
    &\begin{cases}
        H^f_{\distl u}(\vect{A}) &\hspace{-8pt}\eq A_1 \ot H^f_{u}(\vect{A}_{\geq 2})\,,\\[.1em]
        L^f_{\distl u}(\vect{A}) &\hspace{-8pt}\eq A_1 \ot L^f_{u}(\vect{A}_{\geq 2})\,,\\[.1em]
        R^f_{\distl u}(\vect{A}) &\hspace{-8pt}\eq R^f_{u}(\vect{A}_{\geq 2})\,,
    \end{cases}
    &&\begin{cases}
        H^f_{\distr u}(\vect{A}) 
        &\hspace{-8pt} \eq H^f_{u}(\vect{A}_{\leq \lvert u \rvert})\ot A_{\lvert u \rvert+1}\,,\\[.1em]
        L^f_{\distr u}(\vect{A}) 
        &\hspace{-8pt} \eq L^f_{u}(\vect{A}_{\leq \lvert u \rvert})\,,\\[.1em]
        R^f_{\distr u}(\vect{A}) 
        &\hspace{-8pt} \eq R^f_{u}(\vect{A}_{\leq \lvert u \rvert})\ot A_{\lvert u \rvert+1}\,.
    \end{cases}
\end{align}
\end{definition}

With these definitions,~\eqref{eq:kadef} yields a natural transformation \(\ka^f_u\colon H^f_u \to L^f_u \partimes R^f_u\), for each word \(u\) and any morphism \(f\) with target a \(\partimes\)-product. Here,~\(L^f_u \partimes R^f_u\) denotes the pointwise \(\prt\)-tensor product of functors \(L^f_u\) and \(R^f_u\), defined analogously to Definition \ref{def:pointwise tensor product}.

The tidiness of \(\cC\) yields the following obvious analogue of Lemma~\ref{lemma: H is injective on objects}.

\begin{lemma}\label{lemma: Hpar is injective on objects}
    If for a word \(u \in \fZ\), two vectors \(\vect{A}, \vect{B}\in\cC^{|u|}\)
    and two morphisms \(f\), \(g\) into \(\partimes\)-products we 
    have \(H^f_u(\vect{A})= H^g_u(\vect{B})\), then \(\vect{A}=\vect{B}\) 
    and \(\dom(f)=\dom(g)\). \qed
\end{lemma}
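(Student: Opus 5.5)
The plan is to argue by induction on the length \(|u|\), exactly parallel to the (omitted) proof of Lemma~\ref{lemma: H is injective on objects}, using only tidiness of \(\cC\) in the form of Definitions~\ref{def:montidy} and~\ref{def:LDtidy}. The statement concerns words \(u\in\fZ\) with the same \(u\) on both sides, so at each stage the outermost generator of \(u\) is common to \(H^f_u\) and \(H^g_u\). Hence the two objects are built by the same outermost tensor product, and we may cancel using tidiness.

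In the base case \(u=\e\), both vectors lie in \(\cC^0\), so \(\vect{A}=\vect{B}=\ast\). Moreover \(H^f_\e(\ast)=\dom(f)\) and \(H^g_\e(\ast)=\dom(g)\), so \(\dom(f)=\dom(g)\) holds by hypothesis.

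For the inductive step, write \(u=mw\) with \(m=\ter(u)\in\{\ap,\api,\distl,\distr\}\) and \(w=\init(u)\). There are four cases.

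\emph{Case \(m=\ap\).} Here \(H^f_u(\vect{A})=A_1\prt H^f_w(\vect{A}_{\ge2})\) and \(H^g_u(\vect{B})=B_1\prt H^g_w(\vect{B}_{\ge2})\). Tidiness for \(\prt\) gives \(A_1=B_1\) and \(H^f_w(\vect{A}_{\ge2})=H^g_w(\vect{B}_{\ge2})\). The induction hypothesis then yields \(\vect{A}_{\ge2}=\vect{B}_{\ge2}\) and \(\dom(f)=\dom(g)\).

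\emph{Case \(m=\api\).} This is symmetric: cancel the right factor \(A_{|u|}=B_{|u|}\) and apply the induction hypothesis to \(\vect{A}_{\le|w|}\) and \(\vect{B}_{\le|w|}\).

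\emph{Cases \(m=\distl\) and \(m=\distr\).} These are identical to the previous two, except that the outermost product is \(\ot\), so tidiness for \(\ot\) is used instead.

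No real obstacle arises. The only point to watch is that because \(u\) is fixed, no mixed comparison \(X\ot Y=X'\prt Y'\) ever occurs, so the clause \(A_1\ot A_2\ne B_1\prt B_2\) of Definition~\ref{def:LDtidy} is not actually needed here.
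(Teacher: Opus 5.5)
Your proof is correct and matches the paper's intended argument. The paper states the lemma without proof as an immediate consequence of tidiness, i.e.\ exactly your induction on \(|u|\) that cancels the common outermost \(\ot\)- or \(\prt\)-product, and your remark that the mixed clause \(A_1\ot A_2\ne B_1\prt B_2\) is not needed, since \(u\) is the same on both sides, is also right.
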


The following result is proved analogously to Lemma~\ref{lemma: pushing tau through}, using the 
coherence axiom~\eqref{eq:P8} in place of \eqref{eq:P1}.

\begin{lemma}\label{lemma: pushing kappa through}
	For every morphism \(f\) into a \(\prt\)-product, the family of natural 
	transformation~\((\ka^f_u)_{u\in \fZ}\) satisfies the relations
    \begin{align}
    	\ka^f_{\ap u} \circ \ap 
        	&\eq (\ap \partimes {\id}) \circ \ka^f_{\ap \ap u}\,,\label{eq: ap ap}\\[.2em]
        \ka^f_{\api \ap u} \circ \ap 
        	&\eq \ka^f_{\ap \api u}\,,\label{eq: api ap ap}\\[.2em]
        \ka^f_{\api \api u} \circ \ap 
        	&\eq ({\id} \partimes \ap) \circ \ka^f_{\api u}\,,\label{eq: api api ap}\\[.2em]
        \ka^f_{\api u} \circ \api 
        &\eq ({\id} \partimes \api) \circ \ka^f_{\api \api u}\,,\label{eq: api api}\\[.2em]
        \ka^f_{\ap \api u} \circ \api 
        &\eq \ka^f_{\api \ap u}\,,\label{eq: ap api api}\\[.2em]
        \ka^f_{\ap \ap u} \circ \api 
        &\eq (\api \partimes {\id})\circ \ka^f_{\ap u}\,.\label{eq: ap ap api}
	\end{align}
\end{lemma}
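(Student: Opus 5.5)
The proof mirrors that of Lemma~\ref{lemma: pushing tau through} word for word, with \(\ot\), \(\ao\), \(\tau\) replaced by \(\prt\), \(\ap\), \(\ka\), and with the pentagon~\eqref{eq:P8} in place of~\eqref{eq:P1}. The distributor generators \(\distl,\distr\) of \(\fZ\) never enter, because each of the six identities only involves words whose last one or two letters are \(\ap\) or \(\api\). The word \(u\) below them is arbitrary and is carried along through naturality.

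For \eqref{eq: ap ap} we compute:
\begin{align*}
\ka^f_{\ap u}\circ\ap
&\eq \ap\circ(\id\prt\ka^f_u)\circ\ap
\eq \ap\circ\bigl((\id\prt\id)\prt\ka^f_u\bigr)\circ\ap\\
&\eq \ap\circ\ap\circ\bigl(\id\prt(\id\prt\ka^f_u)\bigr)
\eq (\ap\prt\id)\circ\ap\circ(\id\prt\ap)\circ\bigl(\id\prt(\id\prt\ka^f_u)\bigr)\\
&\eq (\ap\prt\id)\circ\ap\circ(\id\prt\ka^f_{\ap u})
\eq (\ap\prt\id)\circ\ka^f_{\ap\ap u}.
\end{align*}
The steps use, in order: the definition~\eqref{eq:kadef}, bifunctoriality of \(\prt\), naturality of \(\ap\), \eqref{eq:P8}, and \eqref{eq:kadef} twice.

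For \eqref{eq: api ap ap}, expand \(\ka^f_{\api\ap u}=\api\circ(\ap\prt\id)\circ((\id\prt\ka^f_u)\prt\id)\). Then move \(\ap\) past \((\id\prt\ka^f_u)\prt\id\) by naturality. Next use \eqref{eq:P8}, rewritten via invertibility of \(\ap\) as \(\api\circ(\ap\prt\id)\circ\ap=\ap\circ(\id\prt\api)\). Finally refold into \(\ap\circ(\id\prt\ka^f_{\api u})=\ka^f_{\ap\api u}\).

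For \eqref{eq: api api ap}, expand twice to reach \(\api\circ(\api\prt\id)\circ((\ka^f_u\prt\id)\prt\id)\circ\ap\). Naturality turns this into \(\api\circ(\api\prt\id)\circ\ap\circ(\ka^f_u\prt(\id\prt\id))\). Then use \eqref{eq:P8} in the form \(\api\circ(\api\prt\id)\circ\ap=(\id\prt\ap)\circ\api\), followed by bifunctoriality and \eqref{eq:kadef}.

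The remaining three identities need no new computation. Pre- and postcomposing \eqref{eq: api api ap} with \(\api\) and \(\id\prt\api\) gives \eqref{eq: api api}. Similarly, \eqref{eq: ap api api} and \eqref{eq: ap ap api} follow from \eqref{eq: api ap ap} and \eqref{eq: ap ap} by composing with \(\api\) on the appropriate sides. This uses that \(\ap\) is invertible, which holds in an LD category.

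The only step requiring care is putting \eqref{eq:P8} into the three rearranged forms above. Each comes from \(\ap\circ\ap=(\ap\prt\id)\circ\ap\circ(\id\prt\ap)\) by cancelling invertible factors; everything else is bookkeeping with naturality and bifunctoriality.
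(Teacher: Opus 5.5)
Your proposal is correct and follows the paper's own route. The paper proves this lemma by transcribing the proof of Lemma~\ref{lemma: pushing tau through}, with \(\prt\), \(\ap\), \(\ka\) and \eqref{eq:P8} replacing \(\ot\), \(\ao\), \(\ta\) and \eqref{eq:P1}, and it obtains the last three identities from the first three via invertibility of \(\ap\), exactly as you do.
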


The remaining six pentagon axioms,~\eqref{eq:P2}\,--\,\eqref{eq:P7}, determine what happens when a component of \(\ka^f_u\) is precomposed with a component of \(\distl\), \(\distr\), \(\ao\), or \(\aoi\):

\begin{lemma}\label{lemma: pushing kappa through2}
   	For every morphism \(f\) into a \(\prt\)-product, the family of natural 
	transformations~\((\ka^f_u)_{u\in \fZ}\) satisfies the relations
    \begin{align}
        \ka^f_{\ap u} \circ \distl 
        &\eq ({\distl} \partimes \id) \circ \ka^f_{\distl \ap u}\,,\label{eq: ap distl}\\[.2em]
        \ka^f_{\api \distl u} \circ \distl 
        &\eq \ka^f_{\distl \api u}\,,\label{eq: api distl distl}\\[.2em]
        \ka^f_{\api \distr u} \circ \distl 
        &\eq (\id \partimes \distl) \circ \ka^f_{\distr u}\,,\label{eq: api distr distl}\\[.2em]
		\ka^f_{\ap \distl u} \circ \distr 
        &\eq (\distr \partimes \id)\circ \ka^f_{\distl u}\,,\label{eq: ap distl distr}\\[.2em]
        \ka^f_{\ap \distr u} \circ \distr 
        &\eq \ka^f_{\distr \ap u}\,,\label{eq: ap distr distr}\\[.2em]
        \ka^f_{\api u} \circ \distr 
        &\eq (\id \partimes \distr) \circ \ka^f_{\distr \api u}\,,\label{eq: api distr}\\[.2em]
        \ka^f_{\distl u} \circ \ao 
        &\eq (\ao \partimes {\id}) \circ \ka^f_{\distl \distl u}\,,\label{eq: distl ao}\\[.2em]
        \ka^f_{\distr \distl u} \circ \ao 
        &\eq \ka^f_{\distl \distr u}\,,\label{eq: distr distl ao}\\[.2em]
        \ka^f_{\distr \distr u} \circ \ao 
        &\eq ({\id} \partimes \ao) \circ \ka^f_{\distr u}\,,\label{eq: distr distr ao}\\[.2em]
        \ka^f_{\distr u}\circ\aoi
        &\eq({\id} \partimes \aoi)\circ\ka^f_{\distr \distr u}\,,\\[.2em]
        \ka^f_{\distl \distl u}\circ\aoi
        &\eq (\aoi \partimes {\id})\circ\ka^f_{\distl u}\,,\\[.2em]
        \ka^f_{\distl \distr u}\circ\aoi
        &\eq\ka^f_{\distr \distl u}\,.
     \end{align}
\end{lemma}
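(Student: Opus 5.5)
The plan mirrors the proof of Lemma~\ref{lemma: pushing tau through}: each of the twelve identities is a short computation of the form ``unfold \(\ka\) via~\eqref{eq:kadef}, move the leading \(\ka^f_u\) past the incoming structural morphism by naturality and bifunctoriality, apply one of the pentagons~\eqref{eq:P2}\,--\,\eqref{eq:P7}, and refold \(\ka\).'' The word determines which pentagon applies. Relations \eqref{eq: ap distl}--\eqref{eq: api distr distl} involve the pair \(\ap,\distl\) and use \eqref{eq:P4}. Relations \eqref{eq: ap distl distr}--\eqref{eq: api distr} involve \(\distl,\distr,\ap\); they use \eqref{eq:P6} and \eqref{eq:P7}. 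The three identities \eqref{eq: distl ao}--\eqref{eq: distr distr ao} with \(\ao\) use \eqref{eq:P2}, \eqref{eq:P3} and \eqref{eq:P5}, respectively.

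For example, for \eqref{eq: ap distl} we write
\(\ka^f_{\ap u}\circ\distl=\ap\circ(\id\prt\ka^f_u)\circ\distl\).
Naturality of \(\distl\) in its last two variables gives \(\ap\circ\distl\circ(\id\ot(\id\prt\ka^f_u))\). Pentagon \eqref{eq:P4}, read as \(\ap\circ\distl=(\distl\prt\id)\circ\distl\circ(\id\ot\ap)\), turns this into
\((\distl\prt\id)\circ\distl\circ(\id\ot\ka^f_{\ap u})=(\distl\prt\id)\circ\ka^f_{\distl\ap u}\).
The \(\ao\)-relations work the same way. For \eqref{eq: distl ao}, naturality of \(\ao\) pushes \(\id\ot(\id\ot\ka^f_u)\) past \(\ao\), and then \eqref{eq:P2} rewrites \(\distl\circ\ao\) as \((\ao\prt\id)\circ\distl\circ(\id\ot\distl)\). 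For \eqref{eq: distr distl ao}, we unfold both layers, commute \(\ka^f_u\) to the right, and apply \eqref{eq:P3}. For \eqref{eq: distr distr ao}, we unfold both layers and apply \eqref{eq:P5}. In every case the left-hand side carries the outer layers of \(\ka\) that the pentagon needs, and at the end the pentagon's remaining top-level morphism is absorbed as the outer layer of the right-hand \(\ka\). In identities such as \eqref{eq: api distl distl}, no extra structural morphism is left over after this absorption.

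The three \(\aoi\)-relations should follow, as in the proof of Lemma~\ref{lemma: pushing tau through}, by pre- and postcomposing the corresponding \(\ao\)-relations with \(\aoi\) and with \(\id\prt\aoi\) or \(\aoi\prt\id\), using invertibility of \(\ao\). The first \(\aoi\)-relation comes from \eqref{eq: distr distr ao}, the second from \eqref{eq: distl ao}, and the third from \eqref{eq: distr distl ao}. This step is only valid because \(\cC\) is an LD category rather than merely lax, which is the standing assumption of this section.

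The main obstacle I expect is bookkeeping rather than conceptual difficulty. First, one has to check in each case that the components match exactly, i.e.\ that the functors \(H^f_u,L^f_u,R^f_u\) of Definition~\ref{def: Hparf} on the two sides agree at the relevant vectors. Second, one has to choose the correct orientation of each pentagon, since several pentagons (e.g.\ \eqref{eq:P6}, \eqref{eq:P7}) contain a \(\ap\) that must be used in the direction dictated by the word. I would write out one representative per pentagon in full, as for \eqref{eq: ap distl} above. For the rest I would record only which pentagon and which naturality square is used, since they are routine variants.
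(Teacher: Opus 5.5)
Your proposal is correct and follows the paper's proof: the first nine relations come from unfolding $\ka$, applying naturality and bifunctoriality, invoking one pentagon, and refolding, and the three $\aoi$-relations follow by pre- and postcomposing with inverses. One bookkeeping slip: \eqref{eq: api distr distl} involves $\distr$, so it cannot come from \eqref{eq:P4}; as in the paper, it is an instance of \eqref{eq:P6}, read as $\api\circ(\distr\prt\id)\circ\distl=(\id\prt\distl)\circ\distr$, so the correct assignment for the first six relations is P4, P4, P6, P6, P7, P7.
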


\begin{proof}
	Equation~\eqref{eq: ap distl} is proved as follows:
    \begin{align}
        \ka^f_{\ap u} \circ \distl 
        &\eq \ap \circ (\id \partimes \ka^f_{u}) \circ \distl 
        	&\tag{definition of \(\ka^f\)}\\[.2em]
        &\eq \ap \circ \big((\id \ot \id) \partimes \ka^f_{u}\big) \circ \distl 
        	&\tag{bifunctoriality of \(\ot\)}\\[.2em]
        &\eq \ap \circ \distl \circ \big(\id \ot (\id \partimes \ka^f_{u})\big) 
        	&\tag{naturality of \(\distl\)}\\[.2em]
        &\eq (\distl \partimes \id) \circ \distl \circ (\id \ot \ap) 
        	\circ \big(\id \ot (\id \partimes \ka^f_{u})\big) 
			&\tag*{\eqref{eq:P4}}\\[.2em]
        &\eq (\distl \partimes \id) \circ \distl \circ (\id \ot \ka^f_{\ap u}) 
        	&\tag{definition of \(\ka^f\)}\\[.2em]
        &\eq (\distl \partimes \id) \circ \ka^f_{\distl \ap u}. 
        	&\tag{definition of \(\ka^f\)}
    \end{align}

Equation~\eqref{eq: api distl distl} is proved as follows:
    \begin{align}
        \ka^f_{\api \distl u} \circ \distl 
        &\eq \api \circ (\ka^f_{\distl u}\partimes \id) \circ \distl 
        	&\tag{definition of \(\ka^f\)}\\[.2em]
        &\eq \api \circ (\distl \partimes \id) 
        	\circ \big((\id \ot \ka^f_{u}) \partimes \id \big) \circ \distl 
			&\tag{definition of \(\ka^f\)}\\[.2em]
        &\eq \api \circ (\distl \partimes \id) \circ \distl 
        	\circ \big(\id \ot (\ka^f_{u} \partimes \id) \big) 
			&\tag{naturality of \(\distl\)}\\[.2em]
        &\eq \distl \circ (\id \ot \api) \circ \big(\id \ot (\ka^f_{u} \partimes \id) \big) 
        	&\tag*{\eqref{eq:P4}}\\[.2em]
        &\eq \distl \circ (\id \ot \ka^f_{\api u}) 
        	&\tag{definition of \(\ka^f\)}\\[.2em]
        &\eq \ka^f_{\distl \api u}.
        	&\tag{definition of \(\ka^f\)}
    \end{align}
Equation~\eqref{eq: api distr distl} is proved as follows:
    \begin{align}
        \ka^f_{\api \distr u} \circ \distl 
        &\eq \api \circ (\ka^f_{\distr u}\partimes \id) \circ \distl 
        	&\tag{definition of \(\ka^f\)}\\[.2em]
        &\eq \api \circ (\distr \partimes \id) 
        	\circ \big((\ka^f_u \ot \id) \partimes \id \big) \circ \distl 
			&\tag{definition of \(\ka^f\)}\\[.2em]
        &\eq \api \circ (\distr \partimes \id) \circ \distl 
        	\circ \big(\ka^f_u \ot (\id \partimes \id) \big) 
			&\tag{naturality of \(\distl\)}\\[.2em]
        &\eq (\id \partimes \distl) \circ \distr 
        	\circ \big(\ka^f_u \ot (\id \partimes \id) \big) 
			&\tag*{\eqref{eq:P6}}\\[.2em]
        &\eq (\id \partimes \distl) \circ \distr \circ (\ka^f_u \ot \id) 
        	&\tag{bifunctoriality of \(\prt\)}\\[.2em]
        &\eq (\id \partimes \distl) \circ \ka^f_{\distr u}.
        	&\tag{definition of \(\ka^f\)}
    \end{align}

Equation~\eqref{eq: ap distl distr} is proved as follows:
    \begin{align}
        \ka^f_{\ap \distl u} \circ \distr 
        &\eq \ap \circ (\id \partimes \ka^f_{\distl u}) \circ \distr 
        	&\tag{definition of \(\ka^f\)}\\[.2em]
        &\eq \ap \circ (\id \partimes \distl) 
        	\circ \big(\id \partimes (\id \ot \ka^f_u)\big) \circ \distr 
			&\tag{definition of \(\ka^f\)}\\[.2em]
        &\eq \ap \circ (\id \partimes \distl) \circ \distr 
        	\circ \big((\id \partimes \id) \ot \ka^f_u\big) 
			&\tag{naturality of \(\distr\)}\\[.2em]
         &\eq (\distr \partimes \id) \circ \distl 
         	\circ \big((\id \partimes \id) \ot \ka^f_u\big) 
			&\tag*{\eqref{eq:P6}}\\[.2em]
        &\eq (\distr \partimes \id) \circ \distl \circ (\id \ot \ka^f_u) 
        	&\tag{bifunctoriality of \(\partimes\)}\\[.2em]
        &\eq (\distr \partimes \id) \circ \ka^f_{\distl u}. 
        	&\tag{definition of \(\ka^f\)}
    \end{align}
Equation~\eqref{eq: ap distr distr} is proved as follows:
    \begin{align}
        \ka^f_{\ap \distr u} \circ \distr 
        &\eq \ap \circ (\id \partimes \ka^f_{\distr u}) \circ \distr 
        	&\tag{definition of \(\ka^f\)}\\[.2em]
        &\eq \ap \circ (\id \partimes \distr) 
        	\circ \big(\id \partimes (\ka^f_u \ot \id)\big) \circ \distr 
			&\tag{definition of \(\ka^f\)}\\[.2em]
        &\eq \ap \circ (\id \partimes \distr) \circ \distr 
        	\circ \big((\id \partimes \ka^f_u) \ot \id\big) 
			&\tag{naturality of \(\distr\)}\\[.2em]
        &\eq \distr \circ (\ap \ot \id) \circ \big((\id \partimes \ka^f_u) \ot \id\big) 
        	&\tag*{\eqref{eq:P7}}\\[.2em]
        &\eq \distr \circ (\ka^f_{\ap u} \ot \id) 
        	&\tag{definition of \(\ka^f\)}\\[.2em]
        &\eq \ka^f_{\distr \ap u}. 
        	&\tag{definition of \(\ka^f\)}
    \end{align}
Equation~\eqref{eq: api distr} is proved as follows:
    \begin{align}
        \ka^f_{\api u} \circ \distr 
        &\eq \api \circ (\ka^f_{u}\partimes \id) \circ \distr 
        	&\tag{definition of \(\ka^f\)}\\[.2em]
        &\eq \api \circ \big(\ka^f_{u}\partimes (\id \ot \id)\big) \circ \distr 
        	&\tag{bifunctoriality of \(\ot\)}\\[.2em]
        &\eq \api \circ \distr \circ \big((\ka^f_{u}\partimes \id) \ot \id\big) 
        	&\tag{naturality of \(\distr\)}\\[.2em]
        &\eq (\id \partimes \distr) \circ \distr \circ (\api \ot \id) 
        	\circ \big((\ka^f_{u}\partimes \id) \ot \id\big) 
			&\tag*{\eqref{eq:P7}}\\[.2em]
        &\eq (\id \partimes \distr) \circ \distr 
        	\circ (\ka^f_{\api u}\ot \id)
			&\tag{definition of \(\ka^f\)}\\[.2em]
        &\eq (\id \partimes \distr) \circ \ka^f_{\distr \api u}. 
        	&\tag{definition of \(\ka^f\)}
    \end{align}
     Equation~\eqref{eq: distl ao} is proved as follows:
    \begin{align}
        \ka^f_{\distl u} \circ \ao 
        &\eq \distl \circ (\id \ot \ka^f_{u}) \circ \ao 
        	&\tag{definition of \(\ka^f\)}\\[.2em]
        &\eq \distl \circ \big((\id \ot \id) \ot \ka^f_{u}\big) \circ \ao 
        	&\tag{bifunctoriality of \(\ot\)}\\[.2em]
        &\eq \distl \circ \ao \circ \big(\id \ot (\id \ot \ka^f_{u})\big) 
        	&\tag{naturality of \(\ao\)}\\[.2em]
        &\eq (\ao \partimes \id) \circ \distl \circ (\id \ot \distl) 
        	\circ \big(\id \ot (\id \ot \ka^f_{u})\big) 
			&\tag*{\eqref{eq:P2}}\\[.2em]
        &\eq (\ao \partimes \id) \circ \distl \circ (\id \ot \ka^f_{\distl u}) 
        	&\tag{definition of \(\ka^f\)}\\[.2em]
        &\eq (\ao \partimes \id) \circ \ka^f_{\distl \distl u}. 
        	&\tag{definition of \(\ka^f\)}
    \end{align}
Equation~\eqref{eq: distr distl ao} is shown similarly:
    \begin{align}
        \ka^f_{\distr \distl u} \circ \ao 
        &\eq \distr \circ (\ka^f_{\distl u}\ot \id) \circ \ao 
        	&\tag{definition of \(\ka^f\)}\\[.2em]
        &\eq \distr \circ (\distl \ot \id) \circ \big((\id \ot \ka^f_u) \ot \id\big) 
        	\circ \ao 
			&\tag{definition of \(\ka^f\)}\\[.2em]
        &\eq \distr \circ (\distl \ot \id) \circ \ao 
        	\circ \big(\id \ot (\ka^f_u \ot \id)\big) 
			&\tag{naturality of \(\ao\)}\\[.2em]
        &\eq \distl \circ (\id \ot \distr) \circ \big(\id \ot (\ka^f_u \ot \id)\big) 
        	&\tag*{\eqref{eq:P3}}\\[.2em]
        &\eq \distl \circ (\id \ot \ka^f_{\distr u}) 
        	&\tag{definition of \(\ka^f\)}\\[.2em]
        &\eq \ka^f_{\distl \distr u}. 
        	&\tag{definition of \(\ka^f\)}
    \end{align}
Equation~\eqref{eq: distr distr ao} is proved as follows:
    \begin{align}
        \ka^f_{\distr \distr u} \circ \ao 
        &\eq \distr \circ (\ka^f_{\distr u} \ot \id) \circ \ao 
        	&\tag{definition of \(\ka^f\)}\\[.2em]
        &\eq \distr \circ (\distr \ot \id) 
        	\circ \big((\ka^f_{u}\ot \id) \ot \id\big) \circ \ao 
			&\tag{definition of \(\ka^f\)}\\[.2em]
        &\eq \distr \circ (\distr \ot \id) \circ \ao 
        	\circ \big(\ka^f_{u}\ot (\id \ot \id)\big) 
			&\tag{naturality of \(\ao\)}\\[.2em]
        &\eq (\id \partimes \ao) \circ \distr \circ \big(\ka^f_{u}\ot (\id \ot \id)\big) 
        	&\tag*{\eqref{eq:P5}}\\[.2em]
        &\eq (\id \partimes \ao) \circ \distr \circ (\ka^f_{u}\ot \id) 
        	&\tag{bifunctoriality of \(\ot\)}\\[.2em]
        &\eq (\id \partimes \ao) \circ (\ka^f_{\distr u}\ot \id). 
        	&\tag{definition of \(\ka^f\)}
    \end{align}
    
    The last three equations follow from~\eqref{eq: distl ao}\,--\,\eqref{eq: distr distr ao} 
    by precomposing with \(\aoi\) and possibly postcomposing with \(\id\partimes\aoi\) 
    or \(\aoi\partimes\id\).
\end{proof}

The following definition is the \(\partimes\)-analogue of Definition~\ref{def:ot-analysable}.
\begin{definition}\label{def:par-analysable}
	Let \(\ff\) be a set of morphisms whose targets are \(\partimes\)-products. A morphism \(g\) in \(\cC\) is \emph{\(\ff\)-analysable} if it can be written 
	as \(g=(g'\partimes g'')\circ\ka^f_u(\vect{A})\),
	for some morphisms \(g'\), \(g''\) in~\(\cC\), some morphism \(f\in\ff\), some word \(u\in\fY\), 
	and some vector \(\vect{A}\in\cC^{|u|}\).    
\end{definition}

\begin{remark}
	We distinguish two notions of \(\ff\)-analysability, depending on whether the targets of the morphisms in \(\ff\) are all \(\partimes\)-products (Definition \ref{def:par-analysable}) or all \(\otimes\)\kern0.1em-products
	 (Definition \ref{def:ot-analysable}).
\end{remark}

The following lemma plays a similar rôle to Lemma~\ref{lemma: ot f good}.
	
\begin{lemma}\label{lemma: induction step par}    
     \sloppy Fix a set \(\ff\) of morphisms in \(\cC\) whose targets are \(\partimes\)-products. Fix a non-empty word \(u\in\fZ\). Let  \(f\in \ff\), and let \(\vect{A}\in\cC^{|u|}\). Consider a morphism \(h\) in \(\cC\) with target \(H^f_u(\vect{A})\). Suppose that \(\ka^f_u(\vect{A})\circ h\) is not \(\ff\)-analysable.
     
    \begin{enumerate}[label=\rmlabel]
    	\item\label{it:312a} If $h$ is a component of 
    	\(\ao, \aoi, \distl, \distr, \ap,\) or \(\api\,,\)
    	then $u=\distr, \distl, \api, \ap, \api,$ or \(\ap\), respectively. 
    	Moreover, in the first four cases, \(\dom(f)\) is an \(\ot\)-product, 
    	while in the last two cases \(\dom(f)\) is a \(\prt\)-product.
		\item\label{it:312b} Let $h'$ be a morphism in \(\cC\). 
			\begin{center}
			\begin{tabular}{c|c|c}
			If \(h=\)& then \(\ter(u)=\) & and \\  
			\hline \noalign{\vskip 0.6ex}
			\(\id\ot h'\) & \(\distl\) & \(\ka^f_{\init(u)}(\vect{A}_{\ge 2})\circ h'\) \\[.7ex]
			\(h' \ot\id\) & \(\distr\) & \(\ka^f_{\init(u)}(\vect{A}_{\le |u|-1})\circ h'\) \\[.7ex]
			\(\id\partimes h'\) & \(\ap\) & \(\ka^f_{\init(u)}(\vect{A}_{\ge 2})\circ h'\) \\[.7ex]
			\(h'\partimes\id\) & \(\api\) & \(\ka^f_{\init(u)}(\vect{A}_{\le |u|-1})\circ h'\)
			\end{tabular}
			\end{center}
			is not \(\ff\)-analysable.
	\end{enumerate}
\end{lemma}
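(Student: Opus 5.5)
The plan mirrors the proof of Lemma~\ref{lemma: ot f good}, with Lemmas~\ref{lemma: pushing kappa through} and~\ref{lemma: pushing kappa through2} replacing Lemma~\ref{lemma: pushing tau through}. Throughout I read \(\ff\)-analysability as in Definition~\ref{def:par-analysable}, but with words \(v\in\fZ\); this is clearly the intended meaning.

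The basic tool is a tidiness bookkeeping observation. For a non-empty word \(v\), the object \(H^f_v(\vect{B})\) is a \(\prt\)-product if \(\ter(v)\in\{\ap,\api\}\). It is an \(\ot\)-product if \(\ter(v)\in\{\distl,\distr\}\). For \(v=\e\) it is \(\dom(f)\). Since \(h\) has target \(H^f_u(\vect{A})\), the shape of that target constrains \(\ter(u)\). Writing \(u=m\,w\) with \(m=\ter(u)\), the shape of \(H^f_w\) forced by tidiness then constrains \(\ter(w)\), or forces \(w=\e\).

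For part~\ref{it:312a} I go through the six cases, each time using the target of \(h\).

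\(h\) a component of \(\ao\): the target is \((X\ot Y)\ot Z\), an \(\ot\)-product, so \(\ter(u)\in\{\distl,\distr\}\).
If \(\ter(u)=\distl\), Equation~\eqref{eq: distl ao} shows \(\ka^f_u\circ h\) is \(\ff\)-analysable, a contradiction.
If \(u=\distr w\), then \(H^f_w=X\ot Y\) is an \(\ot\)-product. So either \(\ter(w)\in\{\distl,\distr\}\), which is excluded by~\eqref{eq: distr distl ao} resp.~\eqref{eq: distr distr ao}, or \(w=\e\). Hence \(u=\distr\) and \(\dom(f)=X\ot Y\) is an \(\ot\)-product.

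\(h\) a component of \(\aoi\): this is symmetric. \(\ter(u)=\distr\) is excluded by the \(\distr\)-\(\aoi\) relation. For \(u=\distl w\), the cases \(\ter(w)\in\{\distl,\distr\}\) are excluded by the remaining two \(\aoi\)-relations. This leaves \(u=\distl\) with \(\dom(f)\) an \(\ot\)-product.

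\(h\) a component of \(\distl\): the target \((X\ot Y)\prt Z\) is a \(\prt\)-product, so \(\ter(u)\in\{\ap,\api\}\).
\(\ter(u)=\ap\) is excluded by~\eqref{eq: ap distl}.
For \(u=\api w\), the object \(H^f_w=X\ot(Y\prt Z)\) is an \(\ot\)-product. The cases \(\ter(w)=\distl,\distr\) are excluded by~\eqref{eq: api distl distl} and~\eqref{eq: api distr distl}. Hence \(u=\api\) and \(\dom(f)\) is an \(\ot\)-product.

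\(h\) a component of \(\distr\): this is analogous, using~\eqref{eq: api distr},~\eqref{eq: ap distl distr} and~\eqref{eq: ap distr distr}. It yields \(u=\ap\) with \(\dom(f)\) an \(\ot\)-product.

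\(h\) a component of \(\ap\) or \(\api\): the target is a \(\prt\)-product, so \(\ter(u)\in\{\ap,\api\}\). For \(u=m\,w\), the object \(H^f_w\) is again a \(\prt\)-product, so \(\ter(w)\in\{\ap,\api\}\) or \(w=\e\). Lemma~\ref{lemma: pushing kappa through} excludes everything except \(u=\api\) resp.\ \(u=\ap\) with \(w=\e\). Then \(\dom(f)\) is a \(\prt\)-product.

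For part~\ref{it:312b}, consider first \(h=\id\ot h'\). The target is an \(\ot\)-product, so \(\ter(u)\in\{\distl,\distr\}\).
If \(u=\distr w\), then bifunctoriality and naturality of \(\distr\) give
\(\ka^f_u\circ(\id\ot h')=(\id\prt(\id\ot h'))\circ\ka^f_u\).
This is analysable, a contradiction. So \(\ter(u)=\distl\).
Now suppose \(\ka^f_w(\vect{A}_{\ge2})\circ h'=(g'\prt g'')\circ\ka^{f'}_v(\vect{B})\). Then bifunctoriality and naturality of \(\distl\) give
\(\ka^f_u\circ h=((A_1\ot g')\prt g'')\circ\ka^{f'}_{\distl v}(A_1,\vect{B})\),
again a contradiction.

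The other three rows of the table follow the same pattern.
For \(h'\ot\id\), the case \(\ter(u)=\distl\) is excluded by naturality of \(\distl\), and one then pushes through \(\distr\).
For \(\id\prt h'\), the target is a \(\prt\)-product, so \(\ter(u)\in\{\ap,\api\}\). Naturality of \(\api\) excludes \(\api\), and one then pushes through \(\ap\).
For \(h'\prt\id\), the roles of \(\ap\) and \(\api\) swap.

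I expect no conceptual obstacle. The main risk is bookkeeping: in each case of part~\ref{it:312a}, it must be checked that tidiness leaves only the values of \(\ter(w)\) covered by a relation in Lemmas~\ref{lemma: pushing kappa through} or~\ref{lemma: pushing kappa through2}, plus \(w=\e\). I would organise this as a small table indexed by (type of \(h\), \(\ter(u)\), \(\ter(w)\)), listing the relation used in each entry.
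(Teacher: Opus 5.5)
Your proposal is correct and follows essentially the paper's proof: in part (i) you use tidiness to pin down the last one or two letters of \(u\) and eliminate every option but one via Lemmas~\ref{lemma: pushing kappa through} and~\ref{lemma: pushing kappa through2} (the paper writes out only the \(\ao\) case), and in part (ii) you exclude the wrong terminal letter by naturality and then push an analysable factorisation through the remaining letter, exactly as the paper does. Two harmless slips: in the \(\distl\) case \(H^f_w\) is \(X\ot Y\), not \(X\ot(Y\prt Z)\); and in the \(\ap\) and \(\api\) cases \(H^f_w\) is a \(\prt\)-product for only one value of \(\ter(u)\), the other value being excluded outright by \eqref{eq: ap ap} resp.\ \eqref{eq: api api}, whatever \(w\) is.
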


\begin{proof}
	For part~\ref{it:312a}, assume that \(h\) is a component of \(\ao\). Then, by definition of \(\ao\), there are objects~\(X\),~\(Y\), and~\(Z\) of \(\cC\) such that 
	\[
		(X\ot Y)\ot Z\eq\codom(h)\eq H^f_u(\vect{A})\,,
	\]
	where the last equality holds by assumption.
	By unraveling Definition \ref{def: Hparf} of \(H^f_u\), we see that either there exists \(w\in\fZ\) such that \(u\in\{\distl w, \distr \distl w, \distr \distr w\}\), 
	or \(u=\distr\) and \(\dom(f)\) is an \(\ot\)\kern0.1em-product. In the former case, 
	one of Equations~\eqref{eq: distl ao}\,--\,\eqref{eq: distr distr ao} would imply that \(\ka^f_u(\vect{A})\circ h\) is \(\ff\)-analysable, which contradicts our assumption.
	Thus, only the latter case can hold. The remaining five claims in~\ref{it:312a} are verified similarly, using the other fifteen equations in Lemmas~\ref{lemma: pushing kappa through} and~\ref{lemma: pushing kappa through2}. 

	\smallskip

	Proceeding with part~\ref{it:312b}, assume \(h=\id\ot h'\) for some morphism \(h'\) and set \(w:=\init(u)\). Then \(H^f_u(\vect{A})\) is again an \(\ot\)\kern0.1em-product, so that \(u\) ends either with \(\distr\) or with \(\distl\). Suppose, for a contradiction, that \(\ter(u)=\distr\). Omitting component indices, we have
	\begin{align}
        \ka^f_u \circ h
        &\eq \distr \circ \bigl(\ka^f_{w}\ot\id\bigr) \circ (\id\ot h') 
        	\tag{$u=\distr w$}\\[0.2em]
        &\eq \distr \circ \bigl((\id \partimes \id)\ot h'\bigr)
        	\circ (\ka^f_{w}\ot\id) 
			\tag{bifunctoriality of \(\ot\) and \(\prt\)}\\[0.2em]
        &\eq \bigl(\id \partimes {(\id\ot h')}\bigr) \circ \distr \circ 
        	(\ka^f_{w}\ot\id)
			\tag{naturality of $\distr$}\\[0.2em]
        &\eq \bigl(\id \partimes {(\id\ot h')}\bigr) \circ \ka^f_u\,, 
        	\tag{$u=\distr w$}
    \end{align}
	contradicting the assumption that $\ka^f_u \circ h$ is not \(\ff\)-analysable. Therefore \(\ter(u)=\distl\). 
	
	The three remaining rows in the 
	second column in part~\ref{it:312b} are treated analogously. In each case, our assumption on \(h\) determines whether \(H^f_u(\vect{A})\) is an~\(\ot\)\kern0.1em-product or a \(\partimes\)-product, leaving only two possibilities for \(\ter(u)\). One possibility can then be eliminated by a similar computation. 
	
	Returning to the case \(h=\id\ot h'\), suppose for a contradiction that \(\ka^f_w(\vect{A}_{\ge 2})\circ h'\) is \(\ff\)-analysable. Then it can be written as \((g'\prt g'')\circ \ka_v^{f'}(\vect{B})\) for some \(f'\in\ff\), \(v\in\fZ\), \(\vect{B}\in\cC^{|v|}\) and morphisms \(g'\), \(g''\). It follows that
	
	\begin{align}
      	\ka^f_u(\vect{A}) \circ h 
      	& \eq \distl \circ \bigl(A_1\ot \ka_{w}^f(\vect{A}_{\ge 2})\bigr)\circ (A_1\ot h') 
        \tag{\(u=\distl w\) and \(h=\id\ot h'\)}\\[0.2em]
        & \eq \distl \circ \bigl(A_1\ot (\ka_{w}^f(\vect{A}_{\ge 2})\circ h')\bigr)
        \tag{bifunctoriality of \(\ot\)}\\[0.2em]
        & \eq \distl \circ \bigl(A_1 \ot (g' \partimes g'')\bigr)
        	\circ \bigl(A_1 \ot \ka_{v}^{f'}(\vect{B})\bigr)
        \tag{\(\ff\)-analysability and bifunctoriality}\\[0.2em]
        & \eq \bigl((A_1 \ot g') \partimes g''\bigr)\circ \distl 
        	\circ \bigl(A_1 \ot \ka_{v}^{f'}(\vect{B})\bigr)
        	\tag{naturality of $\distl$}\\[0.2em]
        & \eq \big((A_1 \ot g')\partimes g''\big)\circ \ka^{f'}_{\distl v}(A_1, \vect{B}) \,,
        	\tag{definition of $\ka^{f'}_{\distl v}$}
    \end{align}
	again contradicting the assumption that \(\ka^{f}_u(\vect{A}) \circ h\) is not \(\ff\)-analysable. The three remaining rows in the 
	third column in part~\ref{it:312b} are treated analogously.
\end{proof}

\section{Normal form of morphisms}\label{sec:normform}
We now have the technical lemmas at our disposal to study the analysability of morphisms in tidy LD categories.
\subsection{LD categories}\label{sec: normal form LD cats}

 In this subsection, this will allow us establish normal form results (Proposition~\ref{prop:normal cat}) for morphisms in a tidy LD category \(\cC\), recursively built from identities and coherence morphisms via $\ot$, $\partimes$, and $\circ$. 
 
\begin{definition}\label{def atomar}
	The class of \emph{atomic} morphisms in a tidy LD category \(\cC\) is the smallest class of morphisms satisfying the following conditions:
	\begin{enumerate}[label=\alabel]
		\item It contains all components of the coherence morphisms \(\ao\), \(\aoi\), \(\ap\), 
		\(\api\), \(\distl\), and \(\distr\).
		\item It is closed under tensoring with identities. That is, for every \(A \in \cC\) and every atomic~\(f\), the morphisms \(f \ot \id_A\), \(\id_A \ot f\), \(f \partimes \id_A\), and \(\id_A \partimes f\) are also atomic.
	\end{enumerate}
\end{definition}

\begin{definition}\label{def elementary}
A morphism in a tidy LD category \(\cC\) is \emph{elementary} if it is an identity or a finite composite of atomic morphisms.
\end{definition}

\begin{remark}
	In our model of the free LD category on a set, as given in Construction \ref{con: free LD cat}, every morphism is elementary.
\end{remark}

\begin{remark}
	The class of elementary morphisms is the smallest class of morphisms such 
	that \(\Ob(\cC)\), together with these morphisms, forms a LD subcategory of \(\cC\).
\end{remark}

\begin{definition}
	We denote by \(\Idd^{\ot}_{\cC}\) and \(\Idd^{\partimes}_{\cC}\) the classes 
of identities of \(\ot\)\kern0.1em-products and \mbox{\(\partimes\)-products}, respectively. 
When the category \(\cC\) is clear from context, we omit the subscript. 
\end{definition}
 		
 		Fix a tidy LD category \(\cC\).
 		
\begin{lemma}\label{lemma idot-analysis}
	Let \(\fY\) be the free monoid with generators \(\ao, \aoi\), as defined in Subsection \ref{subs:morphot}. Let \(u\in\fY\),
	\(\vect{A}\in\cC^{|u|}\), and \(i\in \Idd^{\otimes}\). For any atomic morphism \(h\) with target \(H^{i}_u(\vect{A})\), the composite \(\tau^i_u(\vect{A})\circ h\) is \(\Idd^{\ot}\)-analysable. 
\end{lemma}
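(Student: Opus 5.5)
I argue by contradiction, using induction on the structure of the atomic morphism \(h\), i.e. on the number of times identities are tensored onto a coherence component. Suppose \(\tau^i_u(\vect{A})\circ h\) is not \(\Idd^{\ot}\)-analysable, with \(i=\id_{L\ot R}\).

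\textbf{Empty word.} If \(u=\e\), then \(\tau^i_\e=i\). Hence \(\tau^i_\e\circ h=h\), whose target is the \(\ot\)-product \(L\ot R\). Since \(h\) is atomic with \(\ot\)-product target, \(h\) is either \(h'\ot\id\), or \(\id\ot h'\), or a component of \(\ao\) or \(\aoi\). In the first two cases, \(h=(h'\ot \id)\circ \tau^{i'}_\e\), where \(i'\) is the identity on the source of \(h\). That source is an \(\ot\)-product, so \(i'\in\Idd^{\ot}\) and \(h\) is analysable. In the last case, write \(h=\ao_{X,Y,Z}\). Then \(h=\ao\circ(\id_X\ot \id_{Y\ot Z})=\tau^{i''}_{\ao}(X)\) with \(i''=\id_{Y\ot Z}\), which is analysable with \(g'=g''=\id\). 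Similarly \(\aoi_{X,Y,Z}=\tau^{\id_{X\ot Y}}_{\aoi}(Z)\). This case therefore never yields a counterexample.

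\textbf{Non-empty word.} Now let \(u\) be non-empty and apply Lemma~\ref{lemma: ot f good} with \(\ff=\Idd^{\ot}\).
\begin{itemize}
\item Part \ref{it:35a} excludes all \(\prt\)-type atomic morphisms and the distributors.
\item If \(h=\id\ot h'\), part \ref{it:35c} gives \(\ter(u)=\ao\), and \(\tau^i_{\init(u)}(\vect{A}_{\ge2})\circ h'\) is not analysable. Here \(h'\) is atomic and strictly smaller, so this contradicts the induction hypothesis. The case \(h=h'\ot\id\) is symmetric via part \ref{it:35d}.
\end{itemize}
The remaining case is where \(h\) is a component of \(\ao\) or \(\aoi\). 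Then part \ref{it:35b} forces \(u=\aoi\) or \(u=\ao\), respectively.

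\textbf{The two single-letter words.} This is the main case and must be computed by hand.
\begin{itemize}
\item For \(u=\aoi\) and \(h=\ao\), we have \(\tau^i_{\aoi}(A)=\aoi_{L,R,A}\circ(\id\ot\id)\), which is the identity-component form of \(\aoi\). Thus \(\tau^i_{\aoi}(A)\circ \ao_{L,R,A}=\aoi\circ\ao=\id_{L\ot(R\ot A)}\), using invertibility of the associators.
\item For \(u=\ao\) and \(h=\aoi\), the same argument gives \(\tau^i_{\ao}(A)\circ\aoi_{A,L,R}=\id_{(A\ot L)\ot R}\).
\end{itemize}
An identity of an \(\ot\)-product is \(\tau^{j}_\e\) with \(j\in\Idd^{\ot}\), hence analysable. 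This contradicts the assumption and completes the induction.

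The main obstacle is bookkeeping: confirming that the targets match the shapes \(H^i_u(\vect{A})\), and that \(\tau^i_{\aoi}\) and \(\tau^i_{\ao}\) really reduce to bare associators because \(i\) is an identity. This is exactly where the hypothesis \(i\in\Idd^{\ot}\), and not an arbitrary \(f\), is used.
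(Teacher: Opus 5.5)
Your proof is correct and follows the paper's argument: you treat $u=\e$ directly via $\ao_{X,Y,Z}=\tau^{\id_{Y\ot Z}}_{\ao}(X)$ and $\aoi_{X,Y,Z}=\tau^{\id_{X\ot Y}}_{\aoi}(Z)$, use Lemma~\ref{lemma: ot f good} for non-empty $u$, and finish the single-letter cases with $\aoi\circ\ao=\id$ (and $\ao\circ\aoi=\id$), exactly as the paper does. The only difference is that you induct on the tensoring depth of $h$ where the paper inducts on $|u|$; this is immaterial, since both measures decrease in the reduction $(u,\id\ot h')\mapsto(\init(u),h')$. You should, however, say explicitly that you case-split along a minimal-depth presentation of $h$, so that $h'$ is guaranteed to be atomic of smaller depth.
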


\begin{proof}
	We argue by induction on $|u|$. Let \(B,C\in \cC\) be the unique objects such that \(i=\id_{B\ot C}\). 

\medskip
	
  \noindent \textbfit{Base case.} For \(u = \e\), we have \(\tau^i_u(\vect{A})=\id_{B\ot C}\) by definition. It thus remains to show that every atomic 
  morphism \(h\) with target \(B\ot C\) is \(\Idd^{\ot}\)-analysable.
  
  This requires attention only if \(h\) is a component of \(\ao\) or \(\aoi\).  
  In the first case, \(B\) is an \(\ot\)\kern0.1em-product, say \(B=B'\ot B''\). 
  Then \(h=\ao_{B', B'', C}=\tau_\ao^{f}(B')\), where \(f=\id_{B''\ot C}\). Thus, \(h\) is indeed \(\Idd^{\ot}\)-analysable. In the second case, we similarly
  have \(h=\aoi_{B, C', C''}=\tau_{\aoi}^{f}(C'')\), where \(f=\id_{B\ot C'}\) and \(C',C''\in \cC\) are the unique objects such that \(C=C'\ot C''\). 
  
	\medskip

	\noindent \textbfit{Inductive step.}
	Let \(u\in \fY\sm\{\e\}\). For a contradiction, suppose that \(\tau^i_u(\vect{A})\circ h\) is not \(\Idd^{\ot}\)-analysable. Then \(h\) cannot be of the form \(h=\id\ot h'\) or \(h=h'\ot \id\) for any morphism \(h'\), as this would contradict the induction hypothesis by Lemma~\ref{lemma: ot f good}\ref{it:35c}\,--\,\ref{it:35d}. By Lemma~\ref{lemma: ot f good}\ref{it:35a}\,--\,\ref{it:35b}, \(h\) is thus either a component of \(\ao\) and \(u=\aoi\), or vice versa. In the first case, 
	\begin{align*}
		\tau^i_{\aoi}(A)\circ \ao
		&=
		\aoi\circ \id_{(B\ot C)\ot A}\circ \ao 
		=
		\id_{B\ot (C\ot A)}
		=
		\tau_{\e}^{\id_{B\ot (C\ot A)}}(\ast)
	\end{align*}
	contradicting that \(\tau^i_u(A) \circ h\) is not \(\Idd^{\ot}\)-analysable. The second case is treated similarly.
\end{proof}

The analogous statement for \(\partimes\)-products can be derived similarly from 
Lemma~\ref{lemma: induction step par}.

\begin{lemma}\label{lemma idpar-analysis}
		Let \(\fZ\) be the free monoid with generators \(\ap\), \(\api\), \(\distl\),~\(\distr\), as in Subsection \ref{subs:parmor}. Let \(u\in\fZ\), \(\vect{A}\in\cC^{|u|}\), and \(i\in \Idd^{\partimes}\). For any atomic morphism \(h\) with target \(H^{i}_u(\vect{A})\), the composite \(\ka^i_u(\vect{A})\circ h\) is \(\Idd^{\prt}\)-analysable. 
\end{lemma}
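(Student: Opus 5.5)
We argue by induction on \(|u|\), mirroring the proof of Lemma~\ref{lemma idot-analysis}, with Lemma~\ref{lemma: induction step par} in place of Lemma~\ref{lemma: ot f good}. Write \(i=\id_{B\prt C}\) for the unique objects \(B,C\in\cC\).

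\emph{Base case} \(u=\e\). Here \(\ka^i_\e=\id_{B\prt C}\), so we must show that every atomic \(h\) with target \(B\prt C\) is \(\Idd^{\prt}\)-analysable. If \(h=h'\prt\id\) or \(\id\prt h'\), it is already of the form \((g'\prt g'')\circ\ka^{\id}_\e\) with an identity of a \(\prt\)-product. Components of \(\ao,\aoi\) are excluded by tidiness, since their targets are \(\ot\)-products. The remaining cases are handled as follows:
\begin{itemize}
\item If \(h=\ap_{B',B'',C}\) with \(B=B'\prt B''\), then \(h=\ka^{\id_{B''\prt C}}_{\ap}(B')\).
\item If \(h=\api_{B,C',C''}\) with \(C=C'\prt C''\), then \(h=\ka^{\id_{B\prt C'}}_{\api}(C'')\).
\item If \(h=\distl_{X,Y,C}\) with \(B=X\ot Y\), then \(h=\ka^{\id_{Y\prt C}}_{\distl}(X)\).
\item If \(h=\distr_{B,Y,Z}\) with \(C=Y\ot Z\), then \(h=\ka^{\id_{B\prt Y}}_{\distr}(Z)\).
\end{itemize}
(As in the paper, we read ``\(u\in\fY\)'' in Definition~\ref{def:par-analysable} as \(u\in\fZ\).)

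\emph{Inductive step.} Let \(u\ne\e\) and suppose \(\ka^i_u(\vect{A})\circ h\) is not \(\Idd^{\prt}\)-analysable. If \(h\) is of the form \(\id\ot h'\), \(h'\ot\id\), \(\id\prt h'\) or \(h'\prt\id\), then Lemma~\ref{lemma: induction step par}\ref{it:312b} yields that \(\ka^i_{\init(u)}(\cdot)\circ h'\) is not \(\Idd^{\prt}\)-analysable. Since \(h'\) is again atomic, this contradicts the induction hypothesis. Hence \(h\) is a component of a coherence morphism. By Lemma~\ref{lemma: induction step par}\ref{it:312a}, \(u\) is then a single letter, and \(\dom(i)=B\prt C\) is a \(\prt\)-product. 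This rules out the first four cases, \(h\in\{\ao,\aoi,\distl,\distr\}\), because there \(\dom(i)\) would have to be an \(\ot\)-product, contradicting tidiness.

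Two cases remain, and we compute each directly:
\begin{itemize}
\item For \(h=\ap\) and \(u=\api\), we get \(\ka^i_{\api}(A)\circ\ap=\api\circ(\id_{B\prt C}\prt\id_A)\circ\ap=\id_{B\prt(C\prt A)}\). This equals \(\ka^{\id}_\e(\ast)\) with \(\id\in\Idd^{\prt}\), a contradiction.
\item For \(h=\api\) and \(u=\ap\), we get \(\ka^i_{\ap}(A)\circ\api=\ap\circ\api=\id_{(A\prt B)\prt C}\), again a contradiction.
\end{itemize}

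The main point needing care is the \emph{domain clause} of Lemma~\ref{lemma: induction step par}\ref{it:312a}. Without it, \(h=\distl\) with \(u=\api\) would survive, and there \(\ap,\api\) cannot cancel against a distributor. That clause, together with \(\dom(i)=\cod(i)\) being a \(\prt\)-product, is exactly what eliminates these cases. One should also double-check that the generic form of part~\ref{it:312b} really applies to words beginning with \(\distl\) or \(\distr\) whose base morphism \(i\) is an identity. It does, since the lemma allows any set \(\ff\) of morphisms with \(\prt\)-product targets.
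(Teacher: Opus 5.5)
Your proof is correct and is essentially the paper's argument. It proceeds by induction on \(|u|\). In the base case, each coherence component with target \(B\prt C\) is exhibited as a one-letter \(\ka^{\id}\). In the inductive step, Lemma~\ref{lemma: induction step par}\ref{it:312b} reduces to coherence components; part~\ref{it:312a}, together with \(\dom(i)\) being a \(\prt\)-product, then leaves only the cancellations \(\api\circ\ap\) and \(\ap\circ\api\). You simply spell out the details that the paper leaves to analogy with Lemma~\ref{lemma idot-analysis}, including the \(\fY\)/\(\fZ\) typo in Definition~\ref{def:par-analysable}.
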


\begin{proof}
	Writing \(i=\id_{B\prt C}\) we proceed by induction on \(|u|\).
	
	\medskip
	
    \noindent \textbfit{Base case.} 
    We need to show that every atomic morphism \(h\) with target \(B\prt C\) 
    is \mbox{\(\Idd^{\prt}\)-analysable}. If \(h\) is a component of \(\distl\), \(\distr\), \(\ap\), 
    or \(\api\), we argue as in the base case of Lemma~\ref{lemma idot-analysis}. The 
    remaining case is clear. 
    
    \medskip

	\noindent \textbfit{Inductive step.}
	By Lemma~\ref{lemma: induction step par}\ref{it:312b}, \(h\) has to be a component of
	\(\ao\), \(\aoi\), \(\distl\), \(\distr\), \(\ap\), or \(\api\). Since the source~\(\dom(i)\)
	is a \(\prt\)-product, Lemma~\ref{lemma: induction step par}\ref{it:312a} 
	rules out the first four cases. So either \(h\) is a component of \(\ap\) and \(u=\api\),
	or vice versa. Now we can complete the argument as in the inductive step 
	of Lemma~\ref{lemma idot-analysis}.  
\end{proof}

With these preparations, the desired normal form result, phrased in terms of \(\ff\)-analysability, follows. 

\begin{proposition}\label{prop:normal cat}
	Let \(f\) be an elementary morphism in a tidy LD category \(\cC\). 
	\begin{enumerate}[label=\rmlabel]
		\item\label{it:normal cat 1} If \(\codom(f)\) is neither an \(\ot\)-product 
			nor a \(\partimes\)-product, then \(f\) is an identity. 
		\item\label{it:normal cat 2} If \(\codom(f)\) is an \(\ot\)-product, 
			then \(f\) is \(\Idd^{\ot}\)-analysable.
		\item\label{it:normal cat 3}   If \(\codom(f)\) is a \(\prt\)-product, 
			then \(f\) is \(\Idd^{\partimes}\)-analysable.
	\end{enumerate}
\end{proposition}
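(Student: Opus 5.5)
We argue by induction on the number \(n\) of atomic factors in a presentation \(f=h_n\circ\cdots\circ h_1\) of the elementary morphism \(f\) (with \(n=0\) meaning \(f\) is an identity). The base case is immediate. If \(f=\id_X\) and \(X\) is neither kind of product, part~\ref{it:normal cat 1} holds trivially. If \(X=B\ot C\), then \(f=\tau^{\id_{B\ot C}}_{\e}(\ast)=(\id\ot\id)\circ\tau^{\id_{B\ot C}}_{\e}(\ast)\) is \(\Idd^{\ot}\)-analysable. The case \(X=B\prt C\) is analogous, using \(\ka\).

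For the inductive step, write \(f=g\circ h\) with \(h=h_1\) atomic and \(g=h_n\circ\cdots\circ h_2\) elementary with fewer factors. We split according to \(\codom(f)=\codom(g)\).

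\emph{Part~\ref{it:normal cat 1}.} If \(\codom(g)\) is neither an \(\ot\)- nor a \(\prt\)-product, the induction hypothesis gives that \(g\) is an identity. Then \(f=h\) is atomic. But every atomic morphism has target an \(\ot\)- or \(\prt\)-product: components of the coherence morphisms do, and tensoring with identities preserves this. This contradicts the assumption, so this situation occurs only when \(n=0\).

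\emph{Part~\ref{it:normal cat 2}.} If \(\codom(g)\) is an \(\ot\)-product, the induction hypothesis writes
\[
g=(g'\ot g'')\circ\tau^{i}_u(\vect{A})\qquad\text{with } i\in\Idd^{\ot}.
\]
Then \(\codom(h)=\dom(g)=H^i_u(\vect{A})\), so Lemma~\ref{lemma idot-analysis} applies to the atomic morphism \(h\). It yields
\[
\tau^i_u(\vect{A})\circ h=(k'\ot k'')\circ\tau^{j}_v(\vect{B})\qquad\text{with } j\in\Idd^{\ot}.
\]
Hence, by bifunctoriality,
\[
f=\bigl((g'\circ k')\ot(g''\circ k'')\bigr)\circ\tau^j_v(\vect{B}),
\]
which is \(\Idd^{\ot}\)-analysable.

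\emph{Part~\ref{it:normal cat 3}.} The \(\prt\) case is verbatim the same, with Lemma~\ref{lemma idpar-analysis} replacing Lemma~\ref{lemma idot-analysis} and \(\ka\) replacing \(\tau\).

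The substantive work is already contained in Lemmas~\ref{lemma idot-analysis} and~\ref{lemma idpar-analysis}, so the remaining points to check are bookkeeping. First, the composite \(g'\circ k'\) must be defined. This holds because \(k'\) has target \(L^j_v(\vect{B})\) and \(g'\) has source \(L^i_u(\vect{A})\), and these agree since \(\tau^i_u(\vect{A})\circ h\) and \(\tau^j_v(\vect{B})\) have the same target, tidiness splitting that target uniquely. The same argument applies to \(g''\circ k''\). Second, the induction is on the length of a chosen presentation, not on \(f\) itself. Analysability is a property of the morphism, so the choice of presentation is harmless.
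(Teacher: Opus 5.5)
Your proof is correct and is essentially the paper's argument: induction on the number of atomic factors, with the induction hypothesis applied to $h_n\circ\cdots\circ h_2$, Lemma~\ref{lemma idot-analysis} (resp.\ Lemma~\ref{lemma idpar-analysis}) applied to the first factor, and bifunctoriality to merge the two factorisations. One small slip is in your bookkeeping remark: $k'$ has \emph{source} $L^j_v(\vect{B})$ and target $L^i_u(\vect{A})$, because it is $k'\ot k''$, not $\tau^j_v(\vect{B})$, that shares the target $L^i_u(\vect{A})\ot R^i_u(\vect{A})$ with $\tau^i_u(\vect{A})\circ h$; tidiness then gives $\codom(k')=\dom(g')$, so the composite $g'\circ k'$ is defined as you claim.
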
 

\begin{proof}
	Part~\ref{it:normal cat 1} is a straightforward consequence of the fact that the target of every atomic morphism is either an \(\ot\)\kern0.1em-product or a \(\prt\)-product.
	
	\smallskip
	
	We define the \emph{height} of an elementary morphism~\(f\) 
	in \(\cC\) as the minimal number \(h(f)\in \NNez\) of atomic morphisms composing 
	to \(f\); identities have height zero.
	Parts~\ref{it:normal cat 2} and~\ref{it:normal cat 3} are proved by induction on the height \(n := h(f)\). 
	
	\smallskip
	
	We now present the argument for~\ref{it:normal cat 2}.
	Let \(\codom(f)=A\ot B\).
    
    \smallskip
    
    \noindent\textbfit{Base case.}
     If \(n=0\), then \(f=\id_{A\otimes B}=(\id_A\ot \id_B)\circ \tau^f_{\e}(\ast)\)
     is indeed \(\Idd^{\ot}\)-analysable. 
    \smallskip
    
    \noindent\textbfit{Inductive step.} 
    Let \(f\) be an elementary morphism of height \(n\ge 1\). Write
    \begin{equation}\label{eq: f has length n}
    	f \eq f_n\circ \dots \circ {f_1}
   \end{equation} 
   for atomic morphisms \(f_n, \dots, f_1\). 
   
 Since \(h(f_n \circ \cdots \circ f_2) \leq n-1\), the induction hypothesis yields 
  an identity \(j\in\Idd^{\ot}\), a word \(v \in \fY\), 
  a vector \(\vect{B}\in \cC^{\lvert v \rvert}\), and morphisms \(g', h'\), such that
  \begin{align}\label{eq: factorizing 2 to n}
  	f_n\circ \dots \circ f_2 
  	\eq (g'\ot h')\circ \ta^{j}_{v}(\vect{B})\,.
  \end{align}
  Applying Lemma~\ref{lemma idot-analysis}, we obtain an identity \(i\in\Idd^{\ot}\), 
  a word \(u\in \fY\), a vector \(\vect{A} \in \cC^{\lvert u \rvert}\), and morphisms \(g, h\), such that
  \begin{align}\label{eq: otimes factorisation induction step}
  	\ta^{j}_{v}(\vect{B}) \circ f_1 
  	\eq (g \ot h) \circ \ta_u^{i}(\vect{A})\,.
  \end{align}
  Substituting the appropriate factorisations,
  we obtain
  \begin{align}
  	f &\eq f_n\circ \dots \circ f_1
  	&\tag{factorisation \eqref{eq: f has length n}}\\[.2em]
  	&\eq (g'\ot h')\circ \ta_{v}^{j}(\vect{B}) \circ f_1
  	&\tag{factorisation \eqref{eq: factorizing 2 to n}}\\[.2em]
  	&\eq (g'\ot h')\circ (g\ot h)\circ \ta_u^i(\vect{A})
  	&\tag{factorisation \eqref{eq: otimes factorisation induction step}}\\[.2em] 
  	&\eq \big((g'\circ g)\ot (h'\circ h)\big)\circ \ta_u^i(\vect{A})\,.
  	&\tag{bifunctoriality of \(\ot\)}
  \end{align}
  This completes the induction and the proof of~\ref{it:normal cat 2}. The only 
  changes required for proving~\ref{it:normal cat 3} are that one has to replace 
  \(\ot\), \(\ta\), and \(\fY\) by \(\prt\), \(\ka\), and \(\fZ\), respectively. 
  Moreover, one has to appeal to Lemma~\ref{lemma idpar-analysis} rather than 
  Lemma~\ref{lemma idot-analysis}.
\end{proof}

\begin{remark}	
	Proposition~\ref{prop:normal cat} has an obvious analogue in tidy
	lax monoidal and tidy lax LD categories. 
\end{remark}

\subsection{Frobenius LD functors}\label{subsec:normFrob}
Throughout this subsection, we fix a tidy Frobenius LD functor \(F\colon \cC \to \cD\) between tidy LD categories, with coherence morphisms~\(\mu\) and~\(\Delta\) as in Definition~\ref{def: Vorsicht Funktor}. To establish a normal form result for \(F\) (Proposition~\ref{prop normal F}), we adapt the definitions of atomic and elementary morphisms from Definitions~\ref{def atomar} and~\ref{def elementary}.

\begin{definition}\label{def F atomic}
	The class of \emph{\(F\)-atomic} morphisms is the smallest class of morphisms 
	in \(\cD\) satisfying the following conditions:
	\begin{enumerate}[label=\alabel]
		\item It contains all components of the coherence morphisms \(\ao\), \(\aoi\), \(\ap\), 
		\(\api\), \(\distl\), \(\distr\), \(\mu\), \(\Delta\).
		\item It contains all morphisms of the form \(F(f)\), where \(f\) is an elementary morphism in \(\cC\). 
		\item It is closed under tensoring with identities.
	\end{enumerate}
\end{definition} 

\begin{definition}
A morphism in \(\cD\) is \emph{\(F\)-elementary} if it is an identity or a finite composite of \(F\)-atomic morphisms.
\end{definition}

\begin{remark}
	In our model of the free Frobenius LD functor \(F=\FreeF\) on a set \(S\), as given in Construction \ref{con: free Frob LD fun}, every morphism in the target category of \(F\) is \(F\)-elementary.
\end{remark}

Again we will find a normal form for \(F\)-elementary morphisms. 
The first case we consider has no parallel in the previous subsection: there are non-identity morphisms into objects of the form~\(F(X)\) even though these objects cannot be expressed as tensor products. Our analysis of these morphisms involves the following notion. 

\begin{definition}\label{def multiplicative} 
	Given a tidy Frobenius LD-functor \(F\colon \cC\to \cD\) between tidy LD categories, we define, by recursion on \(n\in\NNez\), the class of morphisms \(\fm_n\) in \(\cD\) as follows:
	\begin{align*}
		\fm_0&=\bigl\{\id_{F(X)}\:\colon\: X\in \cC\bigr\}\,, \\
		\fm_n&=\bigl\{\mu_{X, Y}\circ(f\ot g) \:\colon\: X,\: Y\in \cC,\: \codom(f)=F(X),\: \codom(g)=F(Y)
			\text{ and } f, g\in \bigcup_{m<n}\fm_m\bigr\}\,.
	\end{align*}
	The morphisms in the union \(\bigcup_{n\in\NNez}\fm_n\) are called \emph{multiplicative} morphisms.
\end{definition}

For instance, because of \(\mu_{X, Y}=\mu_{X, Y}\circ (\id_{F(X)}\ot \id_{F(Y)})\)
all components of \(\mu\) are in \(\fm_1\). The next lemma uses the hexagon axiom~\eqref{eq:H1}.  

\begin{lemma}\label{lem:F(X)}
Let \(f\) be a multiplicative morphism. For any \(F\)-atomic morphism \(h\) with \(\codom(h)=\dom(f)\), the composite \(f\circ h\) can be written in the form \(F(g)\circ k\), where \(g\) is an elementary morphism of~\(\cC\) and \(k\) is a multiplicative morphism.
\end{lemma}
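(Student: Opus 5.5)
We argue by induction on the least \(n\) with \(f\in\fm_n\), and within each step we distinguish cases according to the shape of the \(F\)-atomic morphism \(h\). Tidiness of \(\cD\) and of \(F\) limits these shapes. Throughout we use that elementary morphisms of \(\cC\) are closed under composition and under tensoring with identities (they form an LD subcategory), and that \(F(\id_X)=\id_{F(X)}\).

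\emph{Case \(n=0\).} Here \(f=\id_{F(X)}\), so \(\codom(h)=F(X)\). Since \(F\) is tidy, \(F(X)\) is neither an \(\ot\)-product nor a \(\prt\)-product. This excludes components of \(\ao,\aoi,\ap,\api,\distl,\distr,\Delta\), as well as every morphism tensored with an identity. Hence \(h\) is either \(F(g)\) with \(g\) elementary, or a component \(\mu_{X',Y'}\) with \(X=X'\ot Y'\). In the first case \(f\circ h=F(g)\circ\id_{\dom(h)}\), and \(\dom(h)\) is of the form \(F(Z)\), so the identity lies in \(\fm_0\). In the second case \(f\circ h=F(\id_X)\circ\mu_{X',Y'}\), and \(\mu_{X',Y'}\in\fm_1\).

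\emph{Case \(n\ge1\).} Write \(f=\mu_{X,Y}\circ(f_1\ot f_2)\) with \(f_1,f_2\) multiplicative of lower level. Then \(\codom(h)=\dom(f_1)\ot\dom(f_2)\) is an \(\ot\)-product. By tidiness this rules out \(\mu\), \(\Delta\), \(F(g)\), all \(\prt\)-type coherence morphisms, and \(\prt\)-tensorings. Three cases remain.
\begin{enumerate}[label=\rmlabel]
\item \(h=\id\ot h'\). The induction hypothesis gives \(f_2\circ h'=F(g_2)\circ k_2\). Bifunctoriality and naturality of \(\mu\) then yield
\[
f\circ h=F(\id_X\ot g_2)\circ\mu\circ(f_1\ot k_2),
\]
where \(\id_X\ot g_2\) is elementary and \(\mu\circ(f_1\ot k_2)\) is multiplicative. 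The case \(h=h'\ot\id\) is symmetric.
\item \(h\) is a component of \(\ao\). Then \(\dom(f_1)\) is an \(\ot\)-product. It is therefore not of the form \(F(Z)\), so \(f_1\notin\fm_0\) and we can write \(f_1=\mu\circ(a\ot b)\). Naturality of \(\ao\) gives
\[
f\circ h=\mu\circ(\mu\ot\id)\circ\ao\circ\bigl(a\ot(b\ot f_2)\bigr).
\]
Hexagon \eqref{eq:H1} rewrites \(\mu\circ(\mu\ot\id)\circ\ao\) as \(F(\ao)\circ\mu\circ(\id\ot\mu)\). This gives
\[
f\circ h=F(\ao)\circ\mu\circ\bigl(a\ot\mu\circ(b\ot f_2)\bigr),
\]
which has the required form.
\item \(h\) is a component of \(\aoi\). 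Symmetrically, \(f_2=\mu\circ(c\ot d)\). Rewriting \eqref{eq:H1} with the invertible \(\ao\) gives \(\mu\circ(\id\ot\mu)\circ\aoi=F(\aoi)\circ\mu\circ(\mu\ot\id)\). Together with naturality of \(\aoi\) this yields
\[
f\circ h=F(\aoi)\circ\mu\circ\bigl(\mu\circ(f_1\ot c)\ot d\bigr).
\]
\end{enumerate}

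The main obstacle is the bookkeeping in the associator cases. We must check that the multiplicative subterms that appear, such as \(\mu\circ(b\ot f_2)\), again lie in some \(\fm_m\), which holds because the classes \(\fm_m\) are nested by level. We must also check that \(\ao\) and \(\aoi\) are the only \(\ot\)-coherence shapes with a \(\ot\)-product target that need handling; both points follow directly from tidiness.
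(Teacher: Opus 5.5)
Your proof is correct and follows the paper's argument essentially step for step. The shared structure is: induction on the level of \(f\); a base case in which tidiness of \(F\) leaves only \(h=\mu_{X',Y'}\) or \(h=F(g)\); and an inductive step in which \(h=\id\ot h'\) or \(h'\ot\id\) is handled by the induction hypothesis plus naturality of \(\mu\), while \(h\) a component of \(\ao\) or \(\aoi\) forces the relevant factor out of \(\fm_0\) and is settled by naturality of the associator and the hexagon \eqref{eq:H1}. You also write out the \(\aoi\) case that the paper leaves as ``similar'', and your choices of \(k\) in the base case and in case (i), namely \(\id_{\dom(h)}\) and \(\mu\circ(f_1\ot k_2)\), are exactly right. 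One small remark: saying the \(\fm_m\) are nested is slightly off, since \(\fm_0\) is not contained in \(\fm_1\); but all you use is that multiplicative morphisms are closed under \(\mu\circ(-\ot-)\), which is immediate from the definition.
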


\begin{proof}
	We proceed by induction on \(n\in\NNez\) to show that the statement holds for all \(f\in\fm_n\). 
	
	\medskip
	
    \noindent \textbfit{Base case.} If \(n=0\), then by Definition \ref{def multiplicative}, \(f=\id_{F(X)}\) for some \(X\in \cC\). In particular, \(\codom(h)=F(X)\). By the tidiness of \(F\), the target \(\codom(h)\) is thus neither an \(\ot\)\kern0.1em-product nor a \(\partimes\)-product. Since \(h\) is \(F\)-atomic, it follows that either \(h=\mu_{X', X''}\) for objects \(X',X''\in \cC\) with \(X=X'\ot X''\), or \(h=F(g')\) for some elementary morphism \(g'\) in \(\cC\). In the first case, set \(g:=\id_X\) and \(k:=\mu_{X', X''}\). In the second, set \(g:=g'\) and \(k:=\id_{F(X)}\). In both cases, \(f\circ h=F(g)\circ k\), as required.
    
    \medskip

	\noindent \textbfit{Inductive step.} Let \(n\geq 1\) and \(f\in\fm_n\). By definition, we can write
	\begin{equation}\label{eq: multiplicativity factorisation}
	f \eq \mu_{X', X''}\circ (f'\ot f'') \,,
	\end{equation}
	for objects \(X',X''\in \cC\) and multiplicative morphisms \(f', f''\in\bigcup_{m<n}\fm_m\). Since 
	\begin{equation}\label{eq codomh}
		\codom(h)\eq\dom(f)\eq\dom(f')\ot \dom(f'')
	\end{equation}
	is an \(\ot\)\kern0.1em-product and \(h\) is \(F\)-atomic, it follows that \(h\) is either of the form \(\id\ot h'\) or \(h'\ot \id\) 
	for some \(F\)-atomic morphism \(h'\), or \(h\) is a component of \(\ao\) or~\(\aoi\). 
	
	\smallskip
	
	{\it \hspace{2em} First case: \(h=h'\ot h''\), where one of \(h'\), \(h''\) is an identity and the other is \(F\)-atomic.}
	
	\smallskip
	
	If \(h'\) is \(F\)-atomic, the induction hypothesis yields an elementary morphisms \(g'\) in~\(\cC\) and a multiplicative morphism~\(k'\) such that
	\(f'\circ h'=F(g')\circ k'\). If \(h'\) is an identity morphism, set \(g':=\id_{X'}\) and \(k':=h'\); this gives the desired factorisation \(f'\circ h'=F(g')\circ k'\).
	
	Analogously, there exist an elementary morphism \(g''\) from~\(\cC\) and a 
	multiplicative morphism~\(k''\) such that \(f''\circ h''=F(g'')\circ k''\). 
	Altogether, we find 
	\begin{align*}
		f\circ h
		&\eq \mu_{X', X''}\circ (f'\ot f'')\circ (h'\ot h'') 
		\tag{factorisation \eqref{eq: multiplicativity factorisation} and \(h=h'\ot h''\)}\\[.2ex]
		&\eq \mu_{X', X''}\circ \bigl(F(g')\ot F(g'')\bigr)\circ (k'\ot k'') 
		\tag{factorising \(f'\circ h'\) and \(f''\circ h''\)}
		\\[.2ex]
		&\eq F(g'\ot g'')\circ \mu_{\dom(g'), \dom(g'')}\circ (k'\ot k'')\,.
		\tag{naturality of \(\mu\)}
	\end{align*}
	Since \(g'\ot g''\) is an elementary morphism in \(\cC\) and \(\mu_{\dom(g'), \dom(g'')}\circ (k'\ot k'')\)
	is multiplicative, we have thereby found the desired factorisation of \(f\circ h\). 
	
	\smallskip
	
	{\it \hspace{2em} Second case: \(h\) is a component of \(\ao\) or \(\aoi\).}
	
	\smallskip
	
	We treat only the case where \(h=\ao_{A, B, C}\) is a component of \(\ao\). Equation \eqref{eq codomh}, together with the tidiness of \(\cD\), then imply that \(\dom(f')=A\ot B\) is an \(\ot\)\kern0.1em-product. Since \(F\) is tidy, it follows that \(f'\not\in\fm_0\).
	Consequently, there exist objects \(Y', Y''\in \cC\) and multiplicative
	morphisms \(k', k''\in\bigcup_{m<n}\fm_m\) such that 
	\begin{equation}\label{eq: fact multiplicativity f'}
			f' \eq \mu_{Y', Y''}\circ (k'\ot k'').
	\end{equation}
	Now, the following calculation 
	\begin{align}
		f\circ h
		&\eq \mu_{X', X''}\circ (f'\ot f'')\circ \ao_{A, B, C} 
			\tag{factorisation \eqref{eq: multiplicativity factorisation} and \(h=\ao_{A,B,C}\)}\\[.2ex]
		&\eq \mu_{X', X''}\circ \bigl((\mu_{Y', Y''}\circ (k'\ot k''))\ot f''\bigr)\circ \ao_{A, B, C} 
			\tag{factorisation \eqref{eq: fact multiplicativity f'}} \\[.2ex]
		&\eq \mu_{X', X''}\circ (\mu_{Y', Y''}\ot \id_{F(X'')}) 
			\circ \bigl((k'\ot k'')\ot f''\bigr)\circ \ao_{A, B, C}
			\tag{bifunctoriality of \(\ot\)} \\[.2ex]
		&\eq \mu_{X', X''}\circ (\mu_{Y', Y''}\ot \id_{F(X'')})\circ 
			\ao_{F(Y'), F(Y''), F(X'')}\circ \bigl(k'\ot(k''\ot f'')\bigr)
			\tag{naturality of \(\ao\)} \\[.2ex]
		&\eq F(\ao_{Y', Y'', X''})\circ \mu_{Y', Y''\ot X''}\circ (\id_{F(Y')}\ot\mu_{Y'', X''})
			\circ \bigl(k'\ot(k''\ot f'')\bigr)
			\tag{hexagon~\eqref{eq:H1}} \\[.2ex]
		&\eq F(\ao_{Y', Y'', X''})\circ 
			\mu_{Y', Y''\ot X''}\circ \bigl(k'\ot (\mu_{Y'', X''}\circ (k''\ot f''))\bigr)
			\tag{bifunctoriality of \(\ot\)}
	\end{align}
	yields the desired factorisation of \(f\circ h\). The case where \(h\) is a component of \(\aoi\) 
	is similar. 
\end{proof}

Morphisms with target an \(\ot\)\kern0.1em-product are more tractable. They can be discussed without invoking the hexagon axioms:

\begin{lemma}\label{lem:F otimes}
	Let \(u\in\fZ\), \(\vect{A}\in\cD^{|u|}\), and \(i\in \Idd_{\cD}^{\otimes}\). For any \(F\)-atomic morphism \(h\) with target \(H^{i}_u(\vect{A})\), the composite \(\tau^i_u(\vect{A})\circ h\) is \(\Idd_{\cD}^{\ot}\)-analysable.
\end{lemma}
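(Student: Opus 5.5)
The plan is to mimic the proof of Lemma~\ref{lemma idot-analysis} verbatim, now inside the tidy LD category \(\cD\), with \(\ff=\Idd^{\ot}_{\cD}\). (Here \(u\) should be read as a word in \(\fY\), since \(\tau\) and \(H^i_u\) for \(\ot\)-targets are indexed by \(\fY\).) The point is that Lemma~\ref{lemma: ot f good} was proved for an arbitrary tidy LD category and an arbitrary morphism \(h\), not only atomic ones. Hence it applies in \(\cD\) without change. The only new \(F\)-atomic morphisms are components of \(\mu\) and \(\Delta\), morphisms \(F(g)\), and their tensorings with identities. We must check that these cause no trouble.

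We argue by induction on \(|u|\), writing \(i=\id_{B\ot C}\).

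\textbf{Base case.} For \(u=\e\) we need that every \(F\)-atomic \(h\) with target \(B\ot C\) is \(\Idd^{\ot}_{\cD}\)-analysable.
\begin{itemize}
\item If \(h\) has the form \(h'\ot\id\) or \(\id\ot h'\), then \(h=(h'\ot\id)\circ\tau^{i'}_{\e}\) (or the mirror version) with \(i'=\id_{\dom(h)}\).
\item If \(h\) is a component of \(\ao\) or \(\aoi\), we use exactly the computation from Lemma~\ref{lemma idot-analysis}.
\item The remaining \(F\)-atomic morphisms are excluded by target considerations. Components of \(\mu\) and morphisms \(F(g)\) have target of the form \(F(X)\), which by tidiness of \(F\) is not an \(\ot\)-product. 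Components of \(\Delta\), \(\distl\), \(\distr\), \(\ap\), \(\api\), and \(\prt\)-tensorings of anything, have \(\prt\)-product targets, which by tidiness of \(\cD\) are not \(\ot\)-products.
\end{itemize}

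\textbf{Inductive step.} Let \(u\neq\e\) and suppose \(\tau^i_u(\vect{A})\circ h\) is not analysable.
\begin{itemize}
\item If \(h=\id\ot h'\) or \(h=h'\ot\id\) with \(h'\) \(F\)-atomic, Lemma~\ref{lemma: ot f good}\ref{it:35c}--\ref{it:35d} produces a shorter word \(\init(u)\) whose composite with \(h'\) is still not analysable. This contradicts the induction hypothesis applied to \(h'\).
\item The cases excluded by Lemma~\ref{lemma: ot f good}\ref{it:35a} cannot occur.
\item If \(h\) is a component of \(\mu\) or of the form \(F(g)\), its target \(F(X)\) would have to equal \(H^i_u(\vect{A})\). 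But the latter is an \(\ot\)-product for nonempty \(u\), so this is impossible by tidiness of \(F\).
\item Hence \(h\) is a component of \(\ao\) with \(u=\aoi\), or of \(\aoi\) with \(u=\ao\), by Lemma~\ref{lemma: ot f good}\ref{it:35b}. Then \(\tau^i_u(A)\circ h\) collapses to an identity \(\tau^{\id}_{\e}\), exactly as in Lemma~\ref{lemma idot-analysis}, giving a contradiction.
\end{itemize}

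The only step requiring care is confirming that Lemma~\ref{lemma: ot f good} makes no use of atomicity in \(\cC\) and holds for any tidy LD category, so that it applies to \(\cD\). Its proof uses only tidiness, naturality and (P1), so this goes through. In particular, no hexagon axiom is needed, matching the remark preceding the statement.
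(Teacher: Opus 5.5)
Your proposal is correct and is essentially the paper's proof. Like the paper, you repeat the induction on \(|u|\) from Lemma~\ref{lemma idot-analysis}, invoke Lemma~\ref{lemma: ot f good} (valid in any tidy LD category, hence in \(\cD\)), and observe that the new \(F\)-atomic morphisms (components of \(\mu\) and \(\Delta\), and morphisms \(F(g)\)) never have an \(\ot\)-product as target, by tidiness of \(F\) and \(\cD\). You are also right that \(u\) should range over \(\fY\) rather than \(\fZ\). One cosmetic slip: your inductive step does not list components of \(\Delta\) explicitly, but they are excluded by the same target argument you gave in the base case.
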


\begin{proof}
	We repeat the proof of Lemma~\ref{lemma idot-analysis}, which proceeded by induction on \(|u|\) and used Lemma~\ref{lemma: ot f good} in the induction step. There are no new cases, because the source of \(\tau^i_u(\vect{A})\) is 
	an \(\ot\)\kern0.1em-product, while the targets of the additional coherence morphisms \(\mu\), \(\Delta\) are never \(\ot\)\kern0.1em-products by the tidiness of \(F\) and \(\cD\).
\end{proof}

For \(\prt\)-products, a new phenomenon occurs compared to Subsection \ref{sec: normal form LD cats}.

\begin{lemma}\label{lem:Fpar1}
Let \(u\in\fZ\), \(\vect{A}\in\cD^{|u|}\), and \(i\in \Idd_{\cD}^{\partimes}\). Let \(h\) be an \(F\)-atomic morphism with target \(H^{i}_u(\vect{A})\). Suppose that the composite \(\ka^i_u(\vect{A})\circ h\) is not \(\Idd_{\cD}^{\partimes}\)-analysable. Then there are objects \(X,Y\in \cC\) such that \(i=\id_{F(X)\prt F(Y)}\) and \(\ka^{i}_u(\vect{A})\circ h=\ka^{\Delta(X, Y)}_u(\vect{A})\).
\end{lemma}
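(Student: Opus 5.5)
We proceed by induction on \(|u|\), mirroring the proof of Lemma~\ref{lemma idpar-analysis}. The new ingredient is that \(F\)-atomic morphisms now include components of \(\Delta\), whose targets are \(\prt\)-products. Write \(i=\id_{B\prt C}\).

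\textbf{Base case \(u=\e\).} Here \(\ka^i_\e(\ast)=i\), and \(h\) is an \(F\)-atomic morphism with target \(B\prt C\). If \(h\) is a component of \(\distl\), \(\distr\), \(\ap\) or \(\api\), we argue as in the base case of Lemma~\ref{lemma idpar-analysis} and obtain an expression \(h=\ka^{j}_v\) with \(j\in\Idd^{\prt}_{\cD}\) and \(|v|=1\). If \(h=h'\prt\id\) or \(\id\prt h'\), then \(h=(h'\prt\id)\circ\ka^{\id}_\e\), so \(h\) is trivially analysable. Components of \(\ao\), \(\aoi\), \(\mu\) and morphisms \(F(g)\) cannot occur, since their targets are \(\ot\)-products or objects \(F(Z)\), which by tidiness of \(\cD\) and \(F\) are not \(\prt\)-products. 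The same holds for \(\ot\)-tensorings with identities. The only remaining case is \(h=\Delta_{X,Y}\) with \(B\prt C=F(X)\prt F(Y)\). This case is not analysable in general, and it gives exactly the stated conclusion \(i=\id_{F(X)\prt F(Y)}\) and \(i\circ h=\Delta_{X,Y}=\ka^{\Delta(X,Y)}_\e\).

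\textbf{Inductive step.} Let \(u\neq\e\) and suppose \(\ka^i_u(\vect A)\circ h\) is not \(\Idd^{\prt}_{\cD}\)-analysable. Lemma~\ref{lemma: induction step par} holds in \(\cD\) with \(\ff=\Idd^{\prt}_{\cD}\): its proof used only tidiness, naturality and the pentagons, and it concerns only the form of \(h\). We apply it.

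If \(h\) is a component of one of the six coherence morphisms, part~\ref{it:312a} forces \(|u|=1\) and constrains \(\dom(i)\). Since \(\dom(i)\) is a \(\prt\)-product, only \(h\in\{\ap,\api\}\) with \(u=\api,\ap\) respectively survive. These composites reduce to identities exactly as in Lemma~\ref{lemma idot-analysis}, which contradicts non-analysability.

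The cases \(h=\mu\) and \(h=F(g)\) are impossible: \(H^i_u(\vect A)\) is a \(\ot\)- or \(\prt\)-product for nonempty \(u\), while the targets of \(\mu\) and \(F(g)\) are of the form \(F(Z)\). The case \(h=\Delta_{X,Y}\) is also impossible, because then \(H^i_u(\vect A)=F(X)\prt F(Y)\) would force \(u=\e\) by tidiness of \(F\).

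This leaves \(h\) of the form \(\id\ot h'\), \(h'\ot\id\), \(\id\prt h'\) or \(h'\prt\id\), with \(h'\) \(F\)-atomic. Part~\ref{it:312b} then gives \(\ter(u)\) and tells us that \(\ka^i_{w}(\vect A')\circ h'\) is not analysable, where \(w=\init(u)\) and \(\vect A'\) is \(\vect A_{\ge2}\) or \(\vect A_{\le|u|-1}\) as appropriate. By the induction hypothesis, \(i=\id_{F(X)\prt F(Y)}\) and \(\ka^i_w(\vect A')\circ h'=\ka^{\Delta(X,Y)}_w(\vect A')\). We substitute this into the recursion~\eqref{eq:kadef}. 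For instance, when \(u=\distl w\),
\[
\ka^i_u(\vect A)\circ h=\distl\circ\bigl(A_1\ot(\ka^i_w\circ h')\bigr)=\distl\circ\bigl(A_1\ot\ka^{\Delta}_w\bigr)=\ka^{\Delta(X,Y)}_u(\vect A),
\]
and the other three rows are handled identically. This completes the induction.

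\textbf{Main obstacle.} The delicate point is justifying that Lemma~\ref{lemma: induction step par} transfers to \(\cD\) with \(F\)-atomic \(h\). In particular, one must check that the equations in Lemmas~\ref{lemma: pushing kappa through} and~\ref{lemma: pushing kappa through2} for \(\ka^i\) remain valid and that the new generators \(\mu\), \(\Delta\) and \(F(g)\) never match the shapes \(H^i_u(\vect A)\) for nonempty \(u\). We would verify this first, using the tidiness of \(F\) together with Lemma~\ref{lemma: Hpar is injective on objects}.
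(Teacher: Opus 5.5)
Your proof is correct and follows essentially the same route as the paper: induction on \(|u|\), with the base case isolating \(h=\Delta_{X,Y}\), and the inductive step combining Lemma~\ref{lemma: induction step par}\ref{it:312b}, the induction hypothesis and the recursion~\eqref{eq:kadef}; the only cosmetic difference is that the paper disposes of all atomic \(h\) at once via Lemma~\ref{lemma idpar-analysis}, whereas you redo those cases inline. Your stated ``main obstacle'' is not really one, since Lemma~\ref{lemma: induction step par} is already formulated for arbitrary morphisms \(h\) in any tidy LD category and therefore applies verbatim in \(\cD\).
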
 

Here, \(\Delta(X, Y)\) is a typographical variant of \(\Delta_{X, Y}\), which we prefer in contexts with many sub\kern0.1em- and superscripts.
  
\begin{proof}
	Since \(\ka^i_u(\vect{A})\circ h\) is not \(\Idd_{\cD}^{\partimes}\)-analysable, Lemma~\ref{lemma idpar-analysis} implies that \(h\) is not atomic. As an \(F\)-atomic morphism, \(h\) must therefore be of one of the following two forms.
	
	\smallskip
	
	{\it First case: \(h\) is a component of \(\mu\) or \(\Delta\), or \(h=F(f)\) for an elementary morphism \(f\).}
	
	\smallskip
	
	{\it Second case: \(h\) is obtained by tensoring an \(F\)-atomic morphism with an identity.}
	
	\medskip
	
	We proceed by induction on \(|u|\), making the above case distinction in both the base case and the inductive step. Let \(B,C\in \cD\) be the unique objects such that \(i=\id_{B\partimes C}\). 
	
	\medskip
	
	\noindent \textbfit{Base case.} 
	If \(u = \e\), then \(\codom(h)=H^{i}_{\e}(\ast)=B \prt C\) is a \(\prt\)-product. In the first case, \(h=\Delta_{X,Y}\) must be a component of \(\Delta\) for some \(X,Y\in \cC\), as otherwise its target would not be a \(\prt\)-product. Then \(i=\id_{F(X)\prt F(Y)}\) and \(\ka^{i}_\e(\ast)\circ h=\ka^{\Delta(X, Y)}_\e(\ast)\). 
	
	The second case cannot occur: since \(\codom(h)\) is a \(\prt\)-product, \(h\) would need to be of the form \(h=h'\prt \id\) or \(h=\id \prt h'\) for some morphism \(h'\). But then \(\ka^i_\e(\ast)\circ h\) would be \(\Idd_{\cD}^{\partimes}\)-analysable, contradicting our assumption.
	
	\medskip
	
	\noindent \textbfit{Inductive step.}
	Let \(u\in \fY\sm\{\e\}\). By Definition \ref{def: Hparf} of \(H^{i}_u\) and since \(\dom(i)=B \prt C\) is a \(\prt\)-product, the object \(\codom(h)=H^{i}_u(\vect{A})\) is built up from at least three objects of~\(\cD\) via \(\ot\) or \(\prt\). By the tidiness of \(F\), the first case therefore cannot occur. 
	
	For the second case, assume \(h\) is of one of the forms \(\id\ot h', h'\ot\id, \id\prt h'\) or \(h'\prt\id\) for some morphism \(h'\). We treat only the case \(h=\id \ot h'\); the other cases are analogous. 
	
	Set \(w:=\init(u)\). Lemma~\ref{lemma: induction step par}\ref{it:312b} implies \(\ter(u)=\distl\) and that
	the composite \(\ka^i_{w}(\vect{A}_{\ge 2})\circ h'\) is not \(\Idd^{\prt}_{\cD}\)-analysable. By the induction hypothesis, there thus exist objects \(X, Y\in\cC\) such that \(i=\id_{F(X)\prt F(Y)}\) and 
	\begin{equation}\label{eq: h' to Delta}
		\ka^i_{w}(\vect{A}_{\ge 2})\circ h'
		\eq
		\ka^{\Delta(X, Y)}_{w}(\vect{A}_{\ge 2})\,.
	\end{equation} 
	This implies the desired equality, as the following calculation shows:
	\begin{align*}
		\ka^{i}_u(\vect{A})\circ h
		&\eq
		\distl\circ \bigl(A_1\ot \ka^i_{w}(\vect{A}_{\ge 2})\bigr)\circ (A_1\ot h')
		\tag{\(u=\distl w\) and \(h=\id \ot h'\)}\\[.2ex]
		&\eq
		\distl\circ \bigl(A_1\ot \ka^{\Delta(X, Y)}_{w}(\vect{A}_{\ge 2})\bigr)
		\tag{bifunctoriality of \(\ot\) and Equation \eqref{eq: h' to Delta}}\\[.2ex]
		&\eq
		\ka^{\Delta(X, Y)}_u(\vect{A})\,.
		\tag{\(u=\distl w\)}
	\end{align*}
\end{proof}

For our normal form result (Proposition~\ref{prop normal F}) for Frobenius LD functors, we need the following set of morphisms whose targets are \(\prt\)-products:
\begin{equation}
	\fd
	\eq
	\bigl\{\Delta\circ F(\ka^i_u(\vect{X}))\:\colon\: 
	i\in\Idd^{\prt}_{\cC},\, u\in\fZ,\, \vect{X}\in \cC^{|u|}\bigr\}\,,
\end{equation}
where the recursive definition of \(\ka_u\) is evaluated in~\(\cC\). 
For transparency we point out that a morphism \(\Delta\circ F(\ka^i_u(\vect{X}))\) 
has source \(F\bigl(H^{i}_u(\vect{X})\bigr)\) 
and target \(F\bigl(L^{i}_u(\vect{X})\bigr)\prt F\bigl(R^{i}_u(\vect{X})\bigr)\).

The remaining hexagon axioms~\eqref{eq:H2}\,--\,\eqref{eq:H4} have the following consequence. 

	\begin{lemma}\label{lem:Fpar2}
	Let \(v\in\fZ\), \(\vect{A}\in\cD^{|v|}\), and \(c\in\fd\). Let \(h\) be an \(F\)-atomic morphism with target \(H^{c}_v(\vect{A})\). Then the composite \(\ka^c_v(\vect{A})\circ h\) is \(\fd\)-analysable. 
\end{lemma}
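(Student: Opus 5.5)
We argue by induction on \(|v|\), in the style of Lemmas~\ref{lemma idpar-analysis} and~\ref{lem:Fpar1}. Write \(c=\Delta_{L,R}\circ F(\ka^i_u(\vect{X}))\) with \(i\in\Idd^{\prt}_{\cC}\), \(u\in\fZ\), \(\vect{X}\in\cC^{|u|}\), \(L=L^i_u(\vect{X})\), \(R=R^i_u(\vect{X})\). Then \(\dom(c)=F(H)\) with \(H=H^i_u(\vect{X})\).

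\textbf{Inductive step.} Let \(v\neq\e\) and suppose \(\ka^c_v(\vect{A})\circ h\) is not \(\fd\)-analysable. The target \(H^c_v(\vect{A})\) is an \(\ot\)- or \(\prt\)-product, so by tidiness of \(F\) the morphism \(h\) is not a component of \(\mu\), \(\Delta\), nor of the form \(F(g)\). Hence \(h\) is either atomic in \(\cD\) or an \(F\)-atomic morphism tensored with an identity. Lemma~\ref{lemma: induction step par}, applied with \(\ff=\fd\), is proved purely formally in \(\cD\), so it holds here. If \(h\) is tensored with an identity, part~\ref{it:312b} gives a shorter word \(\init(v)\) with a non-analysable composite; this contradicts the induction hypothesis. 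If \(h\) is a coherence component, part~\ref{it:312a} forces \(|v|=1\) and forces \(\dom(c)\) to be a tensor product. But \(\dom(c)=F(H)\), which by tidiness of \(F\) is not, so this case is also a contradiction.

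\textbf{Base case \(v=\e\).} Here \(h\) has target \(F(H)\), so \(h\) is either \(\mu_{X',X''}\) with \(H=X'\ot X''\), or \(h=F(g)\) with \(g\) elementary in \(\cC\).

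\emph{Case \(h=F(g)\).} We have \(c\circ F(g)=\Delta\circ F(\ka^i_u(\vect{X})\circ g)\). Applying Proposition~\ref{prop:normal cat}\ref{it:normal cat 3} to the elementary morphism \(\ka^i_u(\vect{X})\circ g\) gives
\[
\ka^i_u(\vect{X})\circ g=(g'\prt g'')\circ\ka^{i'}_{u'}(\vect{X}')
\]
for some \(i'\in\Idd^{\prt}_{\cC}\), \(u'\in\fZ\), \(\vect{X}'\). Naturality of \(\Delta\) then yields
\[
c\circ F(g)=(F(g')\prt F(g''))\circ\Delta\circ F(\ka^{i'}_{u'}(\vect{X}')),
\]
which is \(\fd\)-analysable with the empty word.

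\emph{Case \(h=\mu_{X',X''}\).} Here \(H\) is an \(\ot\)-product, so \(u\) ends in \(\distl\) or \(\distr\). This is where hexagons~\eqref{eq:H2} and~\eqref{eq:H3} enter. Say \(u=\distl w\), so \(X'=X_1\) and \(X''=H^i_w(\vect{X}_{\ge2})\), and \(\ka^i_u=\distl\circ(\id\ot\ka^i_w)\). We compute
\begin{align*}
c\circ\mu&=\Delta\circ F(\distl)\circ F(\id\ot\ka^i_w)\circ\mu\\
&=\Delta\circ F(\distl)\circ\mu\circ(\id\ot F(\ka^i_w))\\
&=(\mu\prt\id)\circ\distl\circ(\id\ot\Delta)\circ(\id\ot F(\ka^i_w)),
\end{align*}
using naturality of \(\mu\) and then~\eqref{eq:H2}. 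The last expression equals \((\mu\prt\id)\circ\ka^{c'}_{\distl}(F(X_1))\) with \(c'=\Delta\circ F(\ka^i_w)\in\fd\). Hence \(c\circ h\) is \(\fd\)-analysable. The case \(u=\distr w\) is symmetric, using~\eqref{eq:H3}.

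I expect this base case with \(h=\mu\) to be the main obstacle. It is the only place where the structure of \(c\in\fd\) genuinely interacts with the Frobenius data, so the words, the indices and the use of the hexagons must be checked carefully. Hexagon~\eqref{eq:H4} should be needed analogously wherever \(F\) of a \(\prt\)-structure map must be commuted past \(\Delta\). This arises, for instance, when rewriting \(\fd\)-elements into normal form, or, if the paper's intended normal form requires it, for \(u\) ending in \(\ap\) or \(\api\). I would check at this point whether such a rewriting is actually required.
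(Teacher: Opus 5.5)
Your base case agrees with the paper's proof, but your inductive step rests on a false claim that hides a genuine case. You assert that, because \(H^c_v(\vect{A})\) is a tensor product, tidiness of \(F\) rules out \(h\) being a component of \(\mu\), \(\Delta\), or of the form \(F(g)\). This is right for \(\mu\) and \(F(g)\), whose targets have the form \(F(X)\). It is wrong for \(\Delta\): the target \(F(Y)\prt F(Z)\) of \(\Delta_{Y,Z}\) is a \(\prt\)-product, and tidiness of \(F\) says nothing against that. Concretely, for \(v=\ap\) and \(\vect{A}=(F(Y))\) one has \(H^c_{\ap}(F(Y))=F(Y)\prt F(H^i_u(\vect{X}))\), which is exactly the target of the \(F\)-atomic morphism \(\Delta_{Y,H^i_u(\vect{X})}\). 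The situation is symmetric for \(v=\api\). Lemma~\ref{lemma: induction step par} does not handle this case, since a component of \(\Delta\) is neither an LD coherence morphism nor of the form \(\id\prt h'\) or \(h'\prt\id\). Nor does the induction hypothesis, since no shorter word appears. So your argument leaves it open.

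This is precisely where \eqref{eq:H4} is needed, which settles the question you leave unresolved at the end. It is the outer word \(v\), not the inner word \(u\), that must be \(\ap\) or \(\api\). Apply naturality of \(\Delta\), then \eqref{eq:H4}, then functoriality of \(F\):
\begin{align*}
\ka^c_{\ap}(F(Y))\circ\Delta
&= \ap\circ(F(Y)\prt\Delta)\circ\Delta\circ F\bigl(Y\prt\ka^i_u(\vect{X})\bigr)\\
&= (\Delta\prt\id)\circ\Delta\circ F\bigl(\ka^i_{\ap u}(Y,\vect{X})\bigr).
\end{align*}
This is \(\fd\)-analysable because \(\Delta\circ F(\ka^i_{\ap u}(Y,\vect{X}))\in\fd\). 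For \(v=\api\), using \eqref{eq:H4} together with invertibility of \(\ap\), one obtains \((\id\prt\Delta)\circ\Delta\circ F(\ka^i_{\api u}(\vect{X},Y))\). The inner word grows here, which is why \(\fd\) must contain \(\Delta\circ F(\ka^i_u(\vect{X}))\) for every \(u\in\fZ\). With this case added, your proof coincides with the paper's.
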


\begin{proof}
	Write \(c = \Delta\circ F(\ka^i_u(\vect{X}))\) for \(u\in\fZ\), \(\vect{X}\in \cC^{|u|}\), \(i\in\Idd^{\prt}_{\cC}\). We argue by induction on \(|v|\). 
	
	\medskip
	
	\noindent \textbfit{Base case.} By the tidiness of \(F\), the target  
	\begin{equation}\label{eq:1826}
		\codom(h)
		\eq H^{c}_\e(\vect{A})
		\eq \dom(c)
		\eq F\bigl(H^{i}_u(\vect{X})\bigr)
	\end{equation}
	is neither an \(\ot\)\kern0.1em-product nor a \(\prt\)-product. As an \(F\)-atomic morphism, \(h\) must therefore be of one of the following two forms.
	
	\smallskip
	
	{\it \hspace{2em} First case: \(h=F(f)\) for some elementary morphism \(f\) of \(\cC\).}
	
	\smallskip
	
	By functoriality,
	\begin{equation}\label{eq: xi f factorisation} 
		c\circ h
		\eq \Delta\circ F(\ka^i_u(\vect{X})\circ f)\,.
	\end{equation}
	Since the morphism \(\ka^i_u(\vect{X})\circ f\) of~\(\cC\) is elementary and its target is a \(\prt\)-product, 
	Proposition~\ref{prop:normal cat}\ref{it:normal cat 3} yields a factorisation 
	\begin{equation}\label{eq: kappa f factorisation}
		\ka^i_u(\vect{X})\circ f=(g'\prt g'')\circ \ka^{j}_w(\vect{Y}) \,,
	\end{equation}	
	for suitable \(j\in\Idd^{\prt}_{\cC}\), \(w\in\fZ\), \(\vect{Y}\in\cC^{|w|}\), 
	and morphisms \(g'\), \(g''\) in~\(\cC\). The calculation 
	\begin{align}
		c\circ h
		&\eq \Delta\circ F(\ka^i_u(\vect{X})\circ f) \tag{factorisation \eqref{eq: xi f factorisation}}\\[.2ex]
		&\eq \Delta\circ F(g'\prt g'')\circ F(\ka^{j}_w(\vect{Y}))
		\tag{factorisation \eqref{eq: kappa f factorisation} and functoriality of \(F\)} \\[.2ex]
		&\eq \bigl(F(g') \prt F(g'')\bigr) \circ \Delta 
		\circ F(\ka^{j}_w(\vect{Y}))
		\tag{naturality of \(\Delta\)}
	\end{align}
	shows that \(c\circ h=\ka^c_\e(\ast)\circ h\) is indeed \(\fd\)-analysable. 
	
	\smallskip
	
	{\it \hspace{2em} Second case: \(h\) is a component of \(\mu\).}
	
	\smallskip
	
	By Equation~\eqref{eq:1826}, this forces \(H^{i}_u(\vect{X})\) to be an \(\ot\)\kern0.1em-product. Hence \(u\) is non-empty and~\(\ter(u)\) is either \(\distl\) or \(\distr\). 
	Write \(w=\init(u)\).  
	
	If \(\ter(u)=\distl\), then
	\begin{align*}
		c\circ h
		&\eq\Delta\circ F\bigl(\distl\circ (X_1\ot\ka^{i}_w(\vect{X}_{\ge 2}))\bigr)
		\circ\mu
		\tag{definition of \(\ka^i_u\)} \\[.2ex]
		&\eq\Delta\circ F(\distl)\circ F\bigl(X_1\ot\ka^{i}_w(\vect{X}_{\ge 2})\bigr)
		\circ\mu
		\tag{functoriality of \(F\)} \\[.2ex]
		&\eq \Delta\circ F(\distl)\circ \mu 
		\circ \bigl(F(X_1)\ot F(\ka^{i}_w(\vect{X}_{\ge 2}))\bigr)
		\tag{naturality of \(\mu\)} \\[.2ex]
		&\eq(\mu\prt \id)\circ\distl\circ (F(X_1)\ot\Delta)
		\circ \bigl(F(X_1)\ot F(\ka^{i}_w(\vect{X}_{\ge 2}))\bigr)
		\tag{hexagon~\eqref{eq:H2}} \\[.2ex]
		&\eq (\mu\prt \id)\circ\distl\circ \bigl(F(X_1) \ot 
		(\Delta\circ F(\ka^{i}_w(\vect{X}_{\ge 2})))\bigr)
		\tag{bifunctoriality of~\(\ot\)} \\[.2ex]
		&\eq(\mu\prt \id)\circ\ka_{\distl}^{d}\big(F(X_1)\bigr)\,,
		\tag{definition of~\(\ka_{\distl}\)}
	\end{align*}
	where \(d=\Delta\circ F(\ka^{i}_w(\vect{X}_{\ge 2}))\) belongs to \(\fd\).
	
	If \(\ter(u)=\distr\), a similar calculation gives
	\begin{align*}
		c\circ h
		&\eq\Delta\circ F\bigl(\distr\circ (\ka^{i}_w(\vect{X}_{\le |u|-1})
		\ot X_{|u|})\bigr)\circ\mu
		\tag{definition of \(\ka^i_u\)} \\[.2ex]
		&\eq\Delta\circ F(\distr)\circ F\bigl(\ka^{i}_w(\vect{X}_{\le |u|-1})
		\ot X_{|u|}\bigr)\circ\mu
		\tag{functoriality of \(F\)} \\[.2ex]
		&\eq \Delta\circ F(\distr)\circ \mu 
		\circ \bigl(F(\ka^{i}_w(\vect{X}_{\le |u|-1}))\ot F(X_{|u|})\bigr)
		\tag{naturality of \(\mu\)} \\[.2ex]
		&\eq(\id\prt\mu)\circ\distr\circ (\Delta\ot F(X_{|u|}))
		\circ \bigl(F(\ka^{i}_w(\vect{X}_{\le |u|-1}))\ot F(X_{|u|})\bigr)
		\tag{hexagon~\eqref{eq:H3}} \\[.2ex]
		&\eq(\id\prt\mu)\circ\distr\circ \bigl(
		(\Delta\circ F(\ka^{i}_w(\vect{X}_{\le |u|-1}))\ot F(X_{|u|})\bigr)
		\tag{bifunctoriality of~\(\ot\)} \\[.2ex]
		&\eq(\id\prt\mu)\circ\ka_{\distr}^{e}\big(F(X_{|u|})\bigr)\,,
		\tag{definition of~\(\ka_{\distr}\)}
	\end{align*}
	where \(e=\Delta\circ F(\ka^{i}_w(\vect{X}_{\le |u|-1}))\) lies again in \(\fd\). In either case, \(c\circ h=\ka^c_\e(\ast)\circ h\) is \(\fd\)-analysable.
	
	\medskip
	
	\noindent \textbfit{Inductive step.} By tidiness, \(\dom(c)\) is neither an \(\ot\)\kern0.1em-product nor a \(\prt\)-product. The induction hypothesis, together with Lemma~\ref{lemma: induction step par} applied with \(\ff=\fd\) and \(f=c\), thus covers all cases except when \(h=F(f)\) for an elementary morphism \(f\) in \(\cC\) or \(h\) is a component of~\(\mu\)
	or~\(\Delta\). 
	
	Since \(v\neq \e\), the object \(H^c_v(\vect{A})\) is either an \(\ot\)\kern0.1em-product or a \(\prt\)-product. By tidiness, \(h\) is therefore neither of the form \(F(f)\) for an elementary morphism \(f\) in \(\cC\) nor a component of~\(\mu\). Hence we may assume that \(h\) is a component of \(\Delta\) and that \(v\) is either \(\ap\) or \(\api\). 
	
	Assume \(v=\ap\) and write \(A=\vect{A}\). By definition,
	\begin{equation}		
		\ka^c_v(A)\circ h
		\eq 
		\ap\circ(A\prt c)\circ \Delta
		\eq
		\ap\circ(A\prt \Delta)\circ \bigl(A\prt F(\ka_u^i(\vect{X}))\bigr)\circ\Delta\,.
	\end{equation}	
	Choose \(Y\in\cC\) with \(A=F(Y)\). Then
	\begin{align*}
		\ka^c_v(A)\circ h
		&\eq \ap\circ(F(Y)\prt \Delta)
		\circ \bigl(F(Y)\prt F(\ka_u^i(\vect{X}))\bigr)\circ\Delta \\[.2ex]
		&\eq \ap\circ(F(Y)\prt \Delta)\circ \Delta\circ 
		F\bigl(Y \prt \ka_u^i(\vect{X})\bigr)
		\tag{naturality of \(\Delta\)} \\[.2ex]
		&\eq (\Delta\prt\id)\circ \Delta\circ F(\ap)\circ 
		F\bigl(Y \prt \ka_u^i(\vect{X})\bigr)
		\tag{hexagon~\eqref{eq:H4}} \\[.2ex]
		&\eq (\Delta\prt\id)\circ \Delta\circ 
		F\bigl(\ap\circ (Y \prt \ka_u^i(\vect{X}))\bigr)
		\tag{functoriality of \(F\)} \\[.2ex]
		&\eq (\Delta\prt\id)\circ \Delta\circ F\bigl(\ka^i_{\ap u}(Y, \vect{X})\bigr)\,,
		\tag{definition of \(\ka^i_{\ap u}\)} 
	\end{align*}
	which is a \(\fd\)-analysable morphism since 
	\(\Delta\circ F(\ka^i_{\ap u}(Y, \vect{X}))\in \fd\). 
	The case \(v=\api\) is similar.
\end{proof}

\begin{proposition}\label{prop normal F}
	Let \(F\colon \cC \to \cD\) be a tidy Frobenius LD functor. Let \(f\) be an \(F\)-elementary morphism in \(\cD\). 
	\begin{enumerate}[label=\rmlabel]
		\item\label{it:normal F1} If \(\codom(f)\) is neither a tensor product nor 
			of the form \(F(A)\) for some \(A\in\cC\), then \(f\) is an identity. 
		\item\label{it:normal F2} If \(\codom(f)=F(A)\) for some \(A\in \cC\), then 
			there is a factorisation \(f=F(g)\circ h\), where~\(g\) is an elementary  
			morphism of~\(\cC\) and~\(h\) is multiplicative. 
		\item\label{it:normal F3} If \(\codom(f)\) is an \(\ot\)-product, then \(f\) 
		 	is \(\Idd^{\ot}_{\cD}\)-analysable. 
		\item\label{it:normal F4} If \(\codom(f)\) is an \(\prt\)-product, then \(f\) 
		 	is \(\Idd^{\prt}_{\cD}\)-analysable or \(\fd\)-analysable.
	\end{enumerate}
\end{proposition}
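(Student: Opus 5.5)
We follow the scheme of the proof of Proposition~\ref{prop:normal cat} and argue by induction on the \(F\)-height \(n\) of \(f\), i.e.\ the minimal number of \(F\)-atomic morphisms composing to \(f\). Part~\ref{it:normal F1} needs no induction. By tidiness of \(F\), \(\cC\) and \(\cD\), the target of every \(F\)-atomic morphism is either a tensor product or of the form \(F(A)\). Indeed, components of \(\mu\) and morphisms \(F(g)\) have targets \(F(\cdot)\), the remaining generators have tensor-product targets, and tensoring with identities preserves this. Hence an \(F\)-elementary \(f\) whose target is of neither form must be an identity.

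For the other parts, write \(f=f_n\circ\dots\circ f_1\) with \(f_k\) \(F\)-atomic. Set \(f^\flat:=f_n\circ\dots\circ f_2\), which has \(F\)-height at most \(n-1\) and the same target as \(f\). In the base case \(n=0\), \(f\) is an identity. For targets \(F(A)\) we take \(g=\id_A\) and \(h=\id_{F(A)}\in\fm_0\). For \(\ot\)- or \(\prt\)-product targets, \(f=\tau^f_\e(\ast)\) or \(f=\ka^f_\e(\ast)\) with \(f\) an identity, as before.

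\emph{Part~\ref{it:normal F2}.} By induction, \(f^\flat=F(g')\circ k'\) with \(g'\) elementary and \(k'\) multiplicative. Lemma~\ref{lem:F(X)} applied to \(k'\circ f_1\) gives \(k'\circ f_1=F(g'')\circ k\) with \(k\) multiplicative. Then \(f=F(g'\circ g'')\circ k\), and \(g'\circ g''\) is elementary.

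\emph{Part~\ref{it:normal F3}.} By induction, \(f^\flat=(g'\ot h')\circ\tau^j_v(\vect{B})\) with \(j\in\Idd^{\ot}_\cD\). Lemma~\ref{lem:F otimes} rewrites \(\tau^j_v(\vect{B})\circ f_1\) as \((g\ot h)\circ\tau^i_u(\vect{A})\). Bifunctoriality of \(\ot\) then finishes the step exactly as in Proposition~\ref{prop:normal cat}.

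\emph{Part~\ref{it:normal F4}.} This is the only genuinely new step. By induction, \(f^\flat=(g'\prt h')\circ\ka^c_v(\vect{B})\) with \(c\in\Idd^{\prt}_\cD\cup\fd\).
\begin{itemize}
\item If \(c\in\fd\), Lemma~\ref{lem:Fpar2} makes \(\ka^c_v(\vect{B})\circ f_1\) \(\fd\)-analysable, and we absorb \(g'\prt h'\) by bifunctoriality of \(\prt\).
\item If \(c\in\Idd^{\prt}_\cD\), either \(\ka^c_v(\vect{B})\circ f_1\) is \(\Idd^{\prt}_\cD\)-analysable and we conclude likewise, or Lemma~\ref{lem:Fpar1} gives \(c=\id_{F(X)\prt F(Y)}\) and \(\ka^c_v(\vect{B})\circ f_1=\ka^{\Delta(X,Y)}_v(\vect{B})\). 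It remains to note that \(\Delta_{X,Y}\in\fd\): take \(i=\id_{X\prt Y}\in\Idd^{\prt}_\cC\) and \(u=\e\), so that \(\Delta\circ F(\ka^i_\e(\ast))=\Delta_{X,Y}\). Hence \(f=(g'\prt h')\circ\ka^{\Delta(X,Y)}_v(\vect{B})\) is \(\fd\)-analysable.
\end{itemize}

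The main obstacle is this last dichotomy: we must make sure that the two classes of analysability are jointly closed under precomposition with \(F\)-atomic morphisms. Once Lemmas~\ref{lem:Fpar1} and~\ref{lem:Fpar2} are available, the case analysis above appears to cover all possibilities.
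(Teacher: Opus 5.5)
Your proposal is correct and follows the paper's proof: induction on the $F$-height, peeling off the first $F$-atomic factor and invoking Lemmas~\ref{lem:F(X)}, \ref{lem:F otimes}, \ref{lem:Fpar1}, \ref{lem:Fpar2} together with $\Delta_{X,Y}\in\fd$. The one difference is in part~\ref{it:normal F4}: the paper makes two successive passes, first with Lemma~\ref{lem:Fpar1} until a component of $\Delta$ appears and then with Lemma~\ref{lem:Fpar2}, while you fold both into a single induction with the disjunctive hypothesis ``$\Idd^{\prt}_{\cD}$- or $\fd$-analysable'', which is a slightly cleaner packaging of the same argument. (Like the paper, your step $F(g')\circ F(g'')=F(g'\circ g'')$ in part~\ref{it:normal F2}, and Lemma~\ref{lem:F(X)} itself, tacitly need $F$ to be injective on objects so that $\codom(g'')=\dom(g')$; this holds for the free functor of Construction~\ref{con: free Frob LD fun}, which is the only place the proposition is used.)
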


\begin{proof}
	Part~\ref{it:normal F1} holds since the target of an \(F\)-atomic morphism is either a tensor product or of the form \(F(A)\) with \(A\in\cC\). 
	
	Analogously to the notion of height in the proof of Proposition~\ref{prop:normal cat}, we define the \mbox{\emph{\(F\)-height}} of an \(F\)-elementary morphism~\(f\) in \(\cC\) as the minimal number of \(F\)-atomic morphisms composing to \(f\); identities have \(F\)-height zero. Parts~\ref{it:normal F2} and~\ref{it:normal F3} are now proved similarly to Proposition~\ref{prop:normal cat} by induction on the \(F\)-height of \(f\), using Lemmas~\ref{lem:F(X)} and~\ref{lem:F otimes}.

	We turn to part~\ref{it:normal F4}. The same argument as in~\ref{it:normal F2} and~\ref{it:normal F3}, now based instead on Lemma~\ref{lem:Fpar1}, reveals that
	\begin{quotation}
		\it 
		if \(\codom(f)\) is a \(\prt\)-product and there is no 
		factorisation \(f=\ka^{\Delta(X, Y)}_u(\vect{A})\circ f'\) 
		with \(X, Y\in \cC\), \(u\in\fZ\), \(\vect{A}\in\cD^{|u|}\), and an 
		\(F\)-elementary morphism \(f'\), then \(f\) is \(\Idd^{\prt}_{\cD}\)-analysable.
	\end{quotation}
	Taking into account that 
	\(\Delta(X, Y)=\Delta(X, Y)\circ F(\ka^{\id_{X\prt Y}}_{\e})\in\fd\),
	we can repeat the argument once more with Lemma~\ref{lem:Fpar2} in the induction step, 
	thereby learning that 
	\begin{quotation}
		\it 
		if there exists a factorisation \(f=\ka^{\Delta(X, Y)}_u(\vect{A})\circ f'\) 
		with some \(X, Y\in \cC\), \(u\in\fZ\), \(\vect{A}\in\cD^{|u|}\), and an 
		\(F\)-elementary morphism \(f'\), then \(f\) is \(\fd\)-analysable.
	\end{quotation}
	Both italicised statements together imply part~\ref{it:normal F4} of the proposition.   
\end{proof}

\section{Coherence theorems}\label{sec:cohthms}
In this section, we prove the main results of this paper. We need the following terminology.
\begin{definition}
Call an object \(Y\) of a category \(\cC\) \emph{thin} if for every object \(X\in \cC\) there is at most one morphism from \(X\) to \(Y\). A category is \emph{thin} if all of its objects are thin. 
\end{definition}

\subsection{LD categories}

In this subsection, we fix a set \(S\) and denote by \(\cC\) the free LD category on \(S\) from Construction \ref{con: free LD cat}. In particular, all morphisms of \(\cC\) are elementary. The discrete category of positive integers becomes an LD category under addition. Given any function \(r\colon S \to \NN_{>0}\), the universal property of \(\cC\) applied to this LD category yields a \emph{rank function}
\begin{equation}
\|\cdot\|\colon\; \Ob(\cC)\longrightarrow \NN_{>0}
\end{equation} 
satisfying 
\begin{align}\label{eq:C-rank}
	\|A\|\eq r(A) \qquad \text{and} \qquad  \|X\ot Y\| \eq \|X\prt Y\|\eq \|X\|+\|Y\| \,,
\end{align}
for all objects \(A\in S\subset \operatorname{Ob}(\cC)\) and \(X, Y\in\cC\). Every morphism \(f\) in \(\cC\) preserves the rank, i.e., \(\|\dom(f)\|=\|\codom(f)\|\). We fix such a rank function for the rest of this subsection.

We will prove by induction on \(\|Y\|\) that every object \(Y\) of \(\cC\) is thin.  
When \(Y\) is an \(\ot\)\kern0.1em-product, we require the following observation.
 
\begin{lemma}\label{lem: uniqueness lemma otimes}
	If for identities \(i, j\in\Idd^{\ot}\), words \(u,v\in \fY\),
	and vectors \(\vect{A}\in \cC^{\lvert u\rvert}\), \(\vect{B}\in \cC^{\lvert v\rvert}\)
	we have \(H_u^i(\vect{A})=H_v^{j}(\vect{B})\)  
	and \(\norm{L_u^i(\vect{A})}=\norm{L_v^{j}(\vect{B})}\), 
	then \(i=j\), \(u=v\), and \(\vect{A}=\vect{B}\). 
\end{lemma}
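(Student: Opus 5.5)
We argue by induction on \(|u|+|v|\), unravelling Definition~\ref{dfn:2205} of \(H^i_u\) and \(L^i_u\) and using the tidiness of \(\cC\) at each step. Write \(i=\id_{B\ot C}\) and \(j=\id_{B'\ot C'}\). The key observation is that for a word \(w\in\fY\) the quantity \(\norm{L^i_w(\vect{A})}\) records how many tensor factors sit to the left of the distinguished \(\ot\), measured by rank. The recursion of Definition~\ref{dfn:2205} shows how this quantity changes under each letter: \(\ao\) adds \(\norm{A_1}\) to it, whereas \(\aoi\) leaves it unchanged and instead enlarges \(R\).

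\textbf{Base case} (\(u=v=\e\)). Here \(H^i_\e=B\ot C\) and \(H^j_\e=B'\ot C'\). Tidiness gives \(B=B'\) and \(C=C'\), hence \(i=j\).

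\textbf{Inductive step.} Suppose \(u\neq\e\), say \(\ter(u)=\ao\) with \(u=\ao w\). Then \(H^i_u(\vect{A})=A_1\ot H^i_w(\vect{A}_{\ge2})\). We compare this with \(H^j_v(\vect{B})\), splitting according to \(v\).

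If \(v=\ao w'\), tidiness gives \(A_1=B_1\) and \(H^i_w(\vect{A}_{\ge2})=H^j_{w'}(\vect{B}_{\ge2})\). Subtracting \(\norm{A_1}=\norm{B_1}\) from the two \(L\)-norms gives equal \(L\)-norms for \(w\) and \(w'\), so the induction hypothesis applies and we conclude.

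If \(v=\aoi w'\), then \(H^j_v(\vect{B})=H^j_{w'}(\vect{B}_{\le|w'|})\ot B_{|v|}\). Tidiness yields \(A_1=H^j_{w'}(\ldots)\) and \(H^i_w(\vect{A}_{\ge 2})=B_{|v|}\). Now \(L^i_u(\vect{A})=A_1\ot L^i_w(\ldots)\), so its rank strictly exceeds \(\norm{A_1}=\norm{H^j_{w'}}\). On the other hand \(L^j_v(\vect{B})=L^j_{w'}(\ldots)\), and since every \(\tau\) preserves rank we have \(\norm{H^j_{w'}}=\norm{L^j_{w'}}+\norm{R^j_{w'}}>\norm{L^j_{w'}}\). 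This contradicts the hypothesis that the \(L\)-norms agree.

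If \(v=\e\), then \(B'=A_1\) and \(C'=H^i_w(\ldots)\). Here \(\norm{L^j_\e}=\norm{B'}=\norm{A_1}<\norm{L^i_u(\vect{A})}\), again a contradiction.

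The case \(\ter(u)=\aoi\) is symmetric: we compare the right factors, and the inequalities come from \(\norm{L}\) being unchanged while \(H\) grows on the right. For instance, if \(u=\aoi w\) and \(v=\e\), then \(B'=H^i_w(\ldots)\), which has rank strictly larger than \(\norm{L^i_w}=\norm{L^i_u}\).

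The main obstacle is keeping the rank inequalities straight in the mixed cases. Each contradiction ultimately rests on the strict inequality \(\norm{L^f_w(\vect{A})}<\norm{H^f_w(\vect{A})}\), which holds because every rank is positive. Once this is isolated as a preliminary observation, all mixed cases reduce to it. The matched cases use only tidiness, together with the fact that tidiness also identifies the components of \(\vect{A}\) and \(\vect{B}\) one at a time, as in Lemma~\ref{lemma: H is injective on objects}.
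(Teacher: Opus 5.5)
Your argument is correct and is essentially the paper's proof. The paper packages your mixed-case contradictions into a single table: for $X=H^i_u(\vect{A})=\Lambda\ot\Rho$, comparing $\norm{L^i_u(\vect{A})}$ with $\norm{\Lambda}$ (equal, larger, smaller) decides whether $u=\e$, $\ter(u)=\ao$ or $\ter(u)=\aoi$, and it then inducts on $|u|$ to get $u=v$. The only cosmetic difference is that the paper obtains $\vect{A}=\vect{B}$ and $i=j$ afterwards from Lemma~\ref{lemma: H is injective on objects}, whereas you carry them through the induction.
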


\begin{proof}
	For brevity, we set \(X=H_u^i(\vect{A})=H_v^{j}(\vect{B})\). 
	Since \(X\) is an \(\ot\)\kern0.1em-product, there exist unique 
	objects \(\Lambda, \Rho\in \cC\) such that \(X=\Lambda\ot \Rho\).
	
We will use the following table, which considers the rank \(\|L_u^{i}\|\):
	\begin{center}
	\begin{tabular}{c|c}
		if & then \\ \hline \noalign{\vskip 0.6ex}
		\(u=\e\) & \(\norm{L_u^i(\vect{A})}=\|\Lambda\|\)\,, \\[.2ex]
		\(\ter(u)=\ao\) & \(\norm{L_u^i(\vect{A})}>\|\Lambda\|\)\,, \\[.2ex] 
		\(\ter(u)=\aoi\) & \(\norm{L_u^i(\vect{A})}<\|\Lambda\|\)\,.
	\end{tabular}
	\end{center}
	
	Indeed, if \(u=\e\), then \(i\) is the identity on \(X\), and 
	Definition~\ref{dfn:2205} implies \(L_u^i(\vect{A})=\Lambda\).
	If \(\ter(u)=\ao\), then \(A_1=\Lambda\),
	\begin{equation}
		\norm{L_u^i(\vect{A})}
		\eq
		\|A_1\|+\norm{L_{\init(u)}^i(\vect{A}_{\ge 2})}
		\;>\;
		\|\Lambda\|\,.
	\end{equation}
	This shows the second row of the table. Similarly, if \(\ter(u)=\aoi\), then \(A_{|u|}=\Rho\), and 
	\begin{equation}
		\|R^{i}_u(\vect{A})\|
		\eq
		\|R^{i}_u(\vect{A})_{\le |u|-1}\|+\|A_{|u|}\|
		\;>\;
		\|\Rho\|\,.
	\end{equation}
	Since \(\|\Lambda\|+\|\Rho\|=\|X\|=\|L^{i}_u(\vect{A})\|+\|R^{i}_u(\vect{A})\|\), the third row follows. The same implications hold for \(\norm{L_v^{j}(\vect{B})}\). 
	
	We now prove by induction on \(|u|\) that \(u=v\). 
 	Lemma~\ref{lemma: H is injective on objects} then implies the remaining claims. 
	In the base case, \(u=\e\), the table immediately yields \(v=\e\), since we then have \(\|L^{j}_v(\vect{B})\|=\|L^{i}_u(\vect{A})\|=\|\Lambda\|\). In the inductive step, both~\(u\) and~\(v\) must start with the same letter, otherwise the table would be contradicted. 

	If \(\ter(u)=\ter(v)=\ao\), then \(A_1=\Lambda=B_1\),
	\begin{equation}
		H^{i}_{\init(u)}(\vect{A}_{\ge 2})
		\eq
		\Rho
		\eq
		H^{j}_{\init(v)}(\vect{B}_{\ge 2}) \,,
	\end{equation}
	and 
	\begin{equation}
		\|L^{i}_{\init(u)}(\vect{A}_{\ge 2})\|
		\eq
		\norm{L_u^i(\vect{A})}-\|\Lambda\|
		\eq
		\norm{L_v^{j}(\vect{B})}-\|\Lambda\|
		\eq
		\|L^{j}_{\init(v)}(\vect{B}_{\ge 2})\|\,,
	\end{equation}
	so the induction hypothesis implies \(u=v\). The case \(\ter(u)=\ter(v)=\aoi\)
	is similar.
\end{proof}

Lemma \ref{lem: uniqueness lemma otimes} yields the following result.

\begin{lemma}\label{prop:coherence into otimes}
	If two objects \(Y'\), \(Y''\) are thin, then so is \(Y'\ot Y''\).
\end{lemma}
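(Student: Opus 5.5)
Let \(f_1, f_2\colon X\to Y'\ot Y''\) be two morphisms in \(\cC\); we must show \(f_1=f_2\). Since every morphism of \(\cC\) is elementary, Proposition~\ref{prop:normal cat}\ref{it:normal cat 2} lets us write
\begin{equation*}
f_k \eq (g_k'\ot g_k'')\circ \ta^{i_k}_{u_k}(\vect{A}^{(k)}) \qquad (k=1,2)
\end{equation*}
with \(i_k\in\Idd^{\ot}\), \(u_k\in\fY\), \(\vect{A}^{(k)}\in\cC^{|u_k|}\), and morphisms \(g_k'\colon L^{i_k}_{u_k}(\vect{A}^{(k)})\to Y'\) and \(g_k''\colon R^{i_k}_{u_k}(\vect{A}^{(k)})\to Y''\). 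By tidiness, the target of \(\ta^{i_k}_{u_k}(\vect{A}^{(k)})\) is the \(\ot\)-product \(L^{i_k}_{u_k}(\vect{A}^{(k)})\ot R^{i_k}_{u_k}(\vect{A}^{(k)})\), and composing with \(g_k'\ot g_k''\) lands in \(Y'\ot Y''\). This is consistent, since \(g'_k\ot g''_k\) has these factors as source and target.

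The key step is to compare the two factorisations. Both share the source \(X=H^{i_1}_{u_1}(\vect{A}^{(1)})=H^{i_2}_{u_2}(\vect{A}^{(2)})\). Since morphisms preserve rank, \(\|L^{i_k}_{u_k}(\vect{A}^{(k)})\|=\|\codom(g'_k)\|=\|Y'\|\) for both \(k\). Lemma~\ref{lem: uniqueness lemma otimes} therefore gives \(i_1=i_2\), \(u_1=u_2\) and \(\vect{A}^{(1)}=\vect{A}^{(2)}\). Consequently the two \(\ta\)-components coincide. Call this common morphism \(\ta\), and note that its target \(L\ot R\) is the same for \(k=1,2\).

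It then remains to compare \(g_1'\ot g_1''\) with \(g_2'\ot g_2''\). Here \(g_1',g_2'\colon L\to Y'\) are parallel morphisms into \(Y'\), so thinness of \(Y'\) gives \(g_1'=g_2'\). Similarly, thinness of \(Y''\) gives \(g_1''=g_2''\). Hence \(f_1=f_2\).

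The main point requiring care is the application of Lemma~\ref{lem: uniqueness lemma otimes}, in particular extracting the equality of the ranks of \(L\) from the factorisation. This works because \(g'_k\) is a morphism of \(\cC\) into \(Y'\) and all morphisms are rank-preserving. The remaining steps are formal.
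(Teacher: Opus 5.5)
Your proposal is correct and follows the paper's proof essentially verbatim. Both proofs analyse the two parallel morphisms via Proposition~\ref{prop:normal cat}\ref{it:normal cat 2}, use rank preservation together with Lemma~\ref{lem: uniqueness lemma otimes} to identify the two $\tau$-components, and then conclude by thinness of $Y'$ and $Y''$.
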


\begin{proof}
	Let \(f_1,f_2 \colon X \to Y\) be parallel morphisms with target \(Y=Y'\ot Y''\). 
	By Proposition~\ref{prop:normal cat}\ref{it:normal cat 2}, both morphisms are \(\Idd^{\ot}\)-analysable. This means there exist identities \(i, j\in\Idd^{\ot}\),
	words \(u, v\in \fY\), vectors \(\vect{A}\in \cC^{\lvert u \rvert}\), 
	\(\vect{B}\in \cC^{\lvert v \rvert}\), and morphisms
    \begin{align}
    	&g_1 \colon\; L^{i}_{u}(\vect{A}) \slongrightarrow Y',
    	&h_1 \colon\; R^{i}_{u}(\vect{A}) \slongrightarrow Y'',\\
    	&g_2 \colon\; L^{j}_{v}(\vect{B}) \slongrightarrow Y',
    	&h_2 \colon\; R^{j}_{v}(\vect{B}) \slongrightarrow Y''\,,
    \end{align}
    such that  
    \begin{align}\label{eq:1230}
    	f_1 \eq (g_1 \ot h_1) \circ \ta^{i}_{u}(\vect{A})\,,
    	\qquad
    	f_2 \eq (g_2 \ot h_2) \circ \ta^{j}_{v}(\vect{B})\,.
    \end{align}
    
    Since morphisms are rank-preserving, we have 
    \begin{equation}
		\norm{L^{i}_u(\vect{A})}
		\eq \norm{Y'}
		\eq \norm{L^{j}_{v}(\vect{B})}\,.
    \end{equation}
   	By Lemma~\ref{lem: uniqueness lemma otimes}, it follows that \(i=j\), \(u=v\), and 
	\(\vect{A}=\vect{B}\).
	Thus, \(g_1\), \(g_2\) are parallel morphisms from \(L_u^{i}(\vect{A})\) to \(Y'\), and \(h_1\), \(h_2\) are parallel morphisms from \(R_u^{i}(\vect{A})\) to \(Y''\).
	Since \(Y'\) and \(Y''\) are thin, we conclude~\(g_1=g_2\) and~\(h_1=h_2\). By~\eqref{eq:1230}, this implies \(f_1=f_2\).
\end{proof}
 
 By replacing the free monoid~\(\fY\) from Subsection \ref{subs:morphot} with its submonoid generated by \(\ao\) only, Lemma~\ref{prop:coherence into otimes} allows us to recover the following theorem of Laplaza.

\begin{corollary}[\cite{Lap72}]\label{thm: unitless lax monoidal coherence}
	The free lax monoidal category on a set is thin. 
	Equivalently, all formal diagrams in a unitless lax monoidal category commute.
\end{corollary}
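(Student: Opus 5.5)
We work in the explicit model \(\cC\) of the free lax monoidal category on \(S\): it has the same objects as in Construction~\ref{con: free LD cat} restricted to \(\ot\) alone, and its morphisms are generated by identities and components of \(\ao\) (no inverse) under \(\circ\) and \(\ot\), modulo bifunctoriality, naturality and \eqref{eq:P1}. This model is tidy, and every morphism is elementary in the evident sense, where atomic morphisms are built from components of \(\ao\) by tensoring with identities. We also fix a rank function as in \eqref{eq:C-rank}. We then show by induction on \(\|Y\|\) that every object \(Y\) is thin.

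\textbf{Base case.} If \(Y\in S\), then \(Y\) is not an \(\ot\)-product. The analogue of Proposition~\ref{prop:normal cat}\ref{it:normal cat 1} shows that every morphism into \(Y\) is an identity, so \(Y\) is thin.

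\textbf{Inductive step.} If \(Y=Y'\ot Y''\), both factors have smaller rank and are thin by the induction hypothesis. We then run the argument of Lemma~\ref{prop:coherence into otimes}, but with \(\fY\) replaced by its submonoid \(\fY^+\) generated by \(\ao\) alone. This requires checking that the preparatory results survive the restriction.

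\emph{Normal form.} In the lax monoidal analogue of Proposition~\ref{prop:normal cat}\ref{it:normal cat 2}, every morphism \(f\) into an \(\ot\)-product must be written as \((g'\ot g'')\circ\tau^i_u(\vect{A})\) with \(u\in\fY^+\) and \(i\in\Idd^{\ot}\). The induction on height goes through once Lemma~\ref{lemma idot-analysis} holds with \(\fY^+\) in place of \(\fY\). For that lemma:
\begin{itemize}
\item In the base case, an atomic \(h\) into \(B\ot C\) is either of the form \(h'\ot\id\) or \(\id\ot h'\), which is clearly analysable with \(u=\e\), or a component \(\ao_{B',B'',C}=\tau^{\id_{B''\ot C}}_{\ao}(B')\). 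No \(\aoi\) is needed.
\item In the inductive step, Lemma~\ref{lemma: ot f good} requires only relation \eqref{eq: alpha alpha}, which uses \eqref{eq:P1} only. Its cases \ref{it:35c} and \ref{it:35d} use only naturality.
\item When \(u\in\fY^+\) is nonempty, \(\ter(u)=\ao\). If \(h=\ao\), then \eqref{eq: alpha alpha} yields analysability with word \(\ao\ao u\in\fY^+\). If \(h=\id\ot h'\), we recurse. If \(h=h'\ot\id\), the argument of \ref{it:35d} should be adapted: it needs \(\ter(u)=\aoi\), which is impossible here. Instead, naturality of \(\ao\) gives \(\tau^i_{\ao w}\circ(h'\ot\id)=\bigl((h'\ot\id)\ot\id\bigr)\circ\tau^i_{\ao w}\), because \(h'\) acts on the \(A_1\) slot. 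This is analysable directly.
\end{itemize}
We must also check that every word produced along the way stays in \(\fY^+\). The one place the original proof inserted \(\aoi\) was the cancellation \(\aoi\circ\ao=\id\), and with no inverses available that case never arises.

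\emph{Uniqueness.} Lemma~\ref{lem: uniqueness lemma otimes} restricts verbatim: only the rows \(u=\e\) and \(\ter(u)=\ao\) of its table occur, and Lemma~\ref{lemma: H is injective on objects} still applies. Consequently two parallel morphisms into \(Y'\ot Y''\) have normal forms with the same \(i\), \(u\) and \(\vect{A}\), and thinness of \(Y'\) and \(Y''\) forces them to agree.

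\textbf{Formal diagrams.} Given a lax monoidal category \(\cD\), a formal diagram factors through the counit \(\varepsilon_{\cD}\colon F\to\cD\) from the free lax monoidal category on \(\Ob(\cD)\). Since \(F\) is thin, any two parallel composites in the lifted diagram already coincide in \(F\), so their images coincide in \(\cD\). Conversely, the free category itself gives a formal diagram, so the two formulations are equivalent.

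\textbf{Main obstacle.} The delicate step is checking that no inverse associator is ever needed, in particular the case \(h=h'\ot\id\) with \(\ter(u)=\ao\). I expect naturality of \(\ao\) to settle it as described above.
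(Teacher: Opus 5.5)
Your proposal is correct and follows the paper's route exactly: restrict \(\fY\) to the submonoid generated by \(\ao\), then rerun Lemma~\ref{lemma idot-analysis}, Proposition~\ref{prop:normal cat}\ref{it:normal cat 2}, Lemma~\ref{lem: uniqueness lemma otimes} and Lemma~\ref{prop:coherence into otimes}, and the paper states this in a single line. Two small points: applying \eqref{eq: alpha alpha} to a current word \(u\) with \(\ter(u)=\ao\) produces the word \(\ao u\), not \(\ao\ao u\); and your naturality computation for \(h=h'\ot\id\) is precisely the argument behind Lemma~\ref{lemma: ot f good}\ref{it:35d}, where \(\ter(u)=\aoi\) is a conclusion rather than a hypothesis, so that lemma applies verbatim and no adaptation is needed.
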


The analogue of Lemma~\ref{lem: uniqueness lemma otimes} for \(\prt\)-products 
reads as follows. 

\begin{lemma}\label{lem: uniqueness lemma partimes}
	If, for identities \(i, j\in\Idd^{\prt}\), words \(u,v\in \fZ\),
	and vectors \(\vect{A}\in \cC^{\lvert u\rvert}\), \(\vect{B}\in \cC^{\lvert v\rvert}\),
	we have \(H_u^i(\vect{A})=H_v^{j}(\vect{B})\)  
	and \(\norm{L_u^i(\vect{A})}=\norm{L_v^{j}(\vect{B})}\), 
	then \(u=v\), \(i=j\), and \(\vect{A}=\vect{B}\). 
\end{lemma}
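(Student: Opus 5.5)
We follow the proof of Lemma~\ref{lem: uniqueness lemma otimes}: we show by induction on \(|u|\) that \(u=v\), and then Lemma~\ref{lemma: Hpar is injective on objects} gives \(\vect{A}=\vect{B}\) and \(\dom(i)=\dom(j)\), hence \(i=j\), since both are identities. Set \(X=H^i_u(\vect{A})=H^j_v(\vect{B})\). The new feature is that \(X\) may be an \(\ot\)-product or a \(\prt\)-product. Its type determines which letters can occur as \(\ter(u)\). If \(X\) is a \(\prt\)-product, then \(\ter(u)\in\{\ap,\api\}\) or \(u=\e\). If \(X\) is an \(\ot\)-product, then \(u\neq\e\), since \(\dom(i)\) is a \(\prt\)-product, and \(\ter(u)\in\{\distl,\distr\}\). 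The same restrictions apply to \(v\), so in each case only two or three terminal letters have to be distinguished.

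In the \(\prt\)-case, write \(X=\Lambda\prt\Rho\). The same rank table as before holds. If \(u=\e\), then \(\norm{L^i_u(\vect{A})}=\|\Lambda\|\). If \(\ter(u)=\ap\), then \(A_1=\Lambda\) and \(\norm{L^i_u(\vect{A})}=\|\Lambda\|+\norm{L^i_{\init(u)}(\vect{A}_{\ge2})}>\|\Lambda\|\). If \(\ter(u)=\api\), then \(\norm{R^i_u(\vect{A})}>\|\Rho\|\), and the equation \(\|L\|+\|R\|=\|X\|\) gives \(\norm{L^i_u(\vect{A})}<\|\Lambda\|\). This equation holds because \(\|L^i_u(\vect{A})\|+\|R^i_u(\vect{A})\|=\|H^i_u(\vect{A})\|\), as \(\ka^i_u\) is rank preserving. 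The hypothesis \(\norm{L^i_u(\vect{A})}=\norm{L^j_v(\vect{B})}\) therefore forces \(\ter(u)=\ter(v)\), or \(u=v=\e\). When the terminal letters agree, we strip them. In the \(\ap\) case we get \(A_1=\Lambda=B_1\), \(H^i_{\init(u)}(\vect{A}_{\ge2})=\Rho=H^j_{\init(v)}(\vect{B}_{\ge2})\), and the \(L\)-ranks of the shorter words agree after subtracting \(\|\Lambda\|\). The \(\api\) case is analogous, with the \(L\)-ranks unchanged. The induction hypothesis then applies.

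In the \(\ot\)-case, write \(X=\Lambda\ot\Rho\). The distinction between \(\distl\) and \(\distr\) is the step that needs a new idea, and I expect it to be the main point. We argue with the same rank comparison. If \(\ter(u)=\distl\), then \(A_1=\Lambda\), \(H^i_{\init(u)}(\vect{A}_{\ge2})=\Rho\), and \(L^i_u(\vect{A})=\Lambda\ot L^i_{\init(u)}(\cdots)\), so \(\norm{L^i_u(\vect{A})}>\|\Lambda\|\). If \(\ter(u)=\distr\), then \(A_{|u|}=\Rho\), \(R^i_u(\vect{A})=R^i_{\init(u)}(\cdots)\ot\Rho\), and so \(\norm{R^i_u(\vect{A})}>\|\Rho\|\), which again gives \(\norm{L^i_u(\vect{A})}<\|\Lambda\|\). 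Hence the terminal letters of \(u\) and \(v\) agree. Stripping them as in the \(\prt\)-case reduces to shorter words with equal \(H\)'s and equal \(L\)-ranks, and the induction hypothesis gives \(\init(u)=\init(v)\). We only need to check that in each case the stripped data, for instance \(H^i_{\init(u)}(\vect{A}_{\le|u|-1})=\Lambda\) in the \(\distr\) case, is read off correctly from Definition~\ref{def: Hparf}. This is routine bookkeeping.
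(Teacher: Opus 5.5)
Your proposal is correct and follows essentially the same route as the paper. You split according to whether \(X\) is an \(\ot\)- or a \(\prt\)-product, use the rank tables for \(\distl/\distr\) and for \(\e/\ap/\api\) to force equal terminal letters, strip those letters inductively to get \(u=v\), and conclude with Lemma~\ref{lemma: Hpar is injective on objects}; the \(\distl\)/\(\distr\) distinction needs no new idea beyond the same rank comparison, which is exactly what you carry out.
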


\begin{proof}
	The argument is similar to that of Lemma~\ref{lem: uniqueness lemma otimes},
	so we focus on the necessary changes. 
	
	The object 
	\(X=H_u^i(\vect{A})=H_v^{j}(\vect{B})\) may be either an \(\ot\)\kern0.1em-product or 
	a~\(\prt\)-product. If \(X=\Lambda\ot \Rho\) is an \(\ot\)\kern0.1em-product, then \(u\) and \(v\) 
	must end with \(\distl\) or \(\distr\). Moreover, 
	\begin{center}
	\begin{tabular}{c|c}
		if & then \\ \hline \noalign{\vskip 0.6ex}
		\(\ter(u)=\distl\) & \(\norm{L_u^i(\vect{A})}>\|\Lambda\|\)\,, \\[.2ex] 
		\(\ter(u)=\distr\) & \(\norm{L_u^i(\vect{A})}<\|\Lambda\|\)\,.
	\end{tabular}
	\end{center} 
	
	Similarly, if \(X=\Lambda\prt \Rho\) is a \(\prt\)-product, then \(u\) and \(v\) are either empty or end with \(\ap\) or \(\api\). In this case, 
	\begin{center}
	\begin{tabular}{c|c}
		if & then \\ \hline \noalign{\vskip 0.6ex}
		\(u=\e\) & \(\norm{L_u^i(\vect{A})}=\|\Lambda\|\)\,, \\[.2ex]
		\(\ter(u)=\ap\) & \(\norm{L_u^i(\vect{A})}>\|\Lambda\|\)\,, \\[.2ex] 
		\(\ter(u)=\api\) & \(\norm{L_u^i(\vect{A})}<\|\Lambda\|\)\,.
	\end{tabular}
	\end{center}
	
	Based on these observations, a proof by induction on \(|u|\)  analogous to the one in Lemma~\ref{lem: uniqueness lemma otimes} yields \(u=v\).
	The remaining claims follow from Lemma~\ref{lemma: Hpar is injective on objects}.
\end{proof}

\begin{lemma}\label{prop:coherence into par}
 	If \(Y',Y''\in \cC\) are thin, then so is \(Y'\prt Y''\).
\end{lemma}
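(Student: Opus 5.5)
The proof will mirror that of Lemma~\ref{prop:coherence into otimes}, with \(\fY\), \(\ta\) and Lemma~\ref{lem: uniqueness lemma otimes} replaced by \(\fZ\), \(\ka\) and Lemma~\ref{lem: uniqueness lemma partimes}. Let \(f_1,f_2\colon X\to Y\) be parallel morphisms with target \(Y=Y'\prt Y''\). Every morphism of the free LD category \(\cC\) is elementary. Hence Proposition~\ref{prop:normal cat}\ref{it:normal cat 3} shows that both are \(\Idd^{\prt}\)-analysable. So we obtain identities \(i,j\in\Idd^{\prt}\), words \(u,v\in\fZ\), vectors \(\vect{A}\in\cC^{|u|}\), \(\vect{B}\in\cC^{|v|}\), and morphisms
\begin{align*}
	&g_1\colon\; L^i_u(\vect{A})\slongrightarrow Y', &h_1\colon\; R^i_u(\vect{A})\slongrightarrow Y'',\\
	&g_2\colon\; L^j_v(\vect{B})\slongrightarrow Y', &h_2\colon\; R^j_v(\vect{B})\slongrightarrow Y'',
\end{align*}
such that \(f_1=(g_1\prt h_1)\circ\ka^i_u(\vect{A})\) and \(f_2=(g_2\prt h_2)\circ\ka^j_v(\vect{B})\). (Definition~\ref{def:par-analysable} writes \(u\in\fY\), but the intended monoid for \(\prt\)-targets is clearly \(\fZ\), and that is what Proposition~\ref{prop:normal cat}\ref{it:normal cat 3} produces.)

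Next we compare the two factorisations. The sources agree: \(H^i_u(\vect{A})=\dom(f_1)=X=\dom(f_2)=H^j_v(\vect{B})\). Morphisms of \(\cC\) preserve the rank \(\norm{\cdot}\). Applying this to \(g_1\) and \(g_2\) gives \(\norm{L^i_u(\vect{A})}=\norm{Y'}=\norm{L^j_v(\vect{B})}\). Lemma~\ref{lem: uniqueness lemma partimes} then yields \(i=j\), \(u=v\) and \(\vect{A}=\vect{B}\). Consequently \(g_1,g_2\) are parallel morphisms into \(Y'\), and \(h_1,h_2\) are parallel morphisms into \(Y''\). Thinness of \(Y'\) and \(Y''\) gives \(g_1=g_2\) and \(h_1=h_2\), and therefore \(f_1=f_2\).

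All the real work sits in the earlier results. The hard step is the uniqueness statement, Lemma~\ref{lem: uniqueness lemma partimes}, which we may assume. Its difficulty is that \(X\) may be an \(\ot\)-product even though the target is a \(\prt\)-product, because of the distributors; the rank tables in that lemma handle both shapes. In the present proof the only point needing care is that \(Y'\) and \(Y''\) are genuine targets of \(g_k\) and \(h_k\). This holds because \(\codom(\ka^i_u(\vect{A}))=L^i_u(\vect{A})\prt R^i_u(\vect{A})\), and tidiness of \(\cC\) identifies the two factors of \(Y'\prt Y''\) uniquely.
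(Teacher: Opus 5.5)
Your proposal is correct and follows the paper's own proof: you use Proposition~\ref{prop:normal cat}\ref{it:normal cat 3} to obtain the two \(\Idd^{\prt}\)-factorisations. You then use rank preservation together with Lemma~\ref{lem: uniqueness lemma partimes} to identify them, and conclude from the thinness of \(Y'\) and \(Y''\). You are also right that Definition~\ref{def:par-analysable} should read \(u\in\fZ\) rather than \(u\in\fY\).
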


\begin{proof}
	We repeat the argument from the proof of Lemma~\ref{prop:coherence into otimes},
	now using Proposition~\ref{prop:normal cat}\ref{it:normal cat 3} to analyse two given parallel morphisms \(f_1, f_2\colon X\longrightarrow Y'\prt Y''\),
	and Lemma~\ref{lem: uniqueness lemma partimes} to compare the resulting factorisations.
\end{proof}

We arrive at our first main result:

\begin{theorem}\label{thm: coherence}
	The free unitless LD category on a set is thin. Equivalently, every formal 
	diagram in a unitless LD category commutes. This also holds for unitless lax LD categories.
\end{theorem}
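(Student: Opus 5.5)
We prove thinness of the free LD category \(\cC=\Free\) of Construction~\ref{con: free LD cat} by induction on the rank \(\|Y\|\) of the target object \(Y\), using the rank function fixed above with \(r\equiv 1\). If \(Y\) is neither an \(\ot\)-product nor a \(\prt\)-product, then \(Y\in S\). Every morphism of \(\cC\) is elementary, so Proposition~\ref{prop:normal cat}\ref{it:normal cat 1} shows that any morphism into \(Y\) is an identity; hence \(Y\) is thin. This covers every object of rank one. If \(Y=Y'\ot Y''\) or \(Y=Y'\prt Y''\), then \(\|Y'\|,\|Y''\|<\|Y\|\), so \(Y'\) and \(Y''\) are thin by the induction hypothesis. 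Lemma~\ref{prop:coherence into otimes}, respectively Lemma~\ref{prop:coherence into par}, then shows that \(Y\) is thin. By tidiness these cases are exhaustive, so \(\cC\) is thin.

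For the equivalence with the formal-diagram formulation, let \(D\colon J\to\cC'\) be a formal diagram in an LD category \(\cC'\), with lift \(D^\sharp\colon J\to \FreeC\). Any two parallel composites of arrows in \(J\) are sent by \(D^\sharp\) to parallel morphisms of \(\FreeC\). Thinness forces these to coincide, and applying \(\varepsilon_{\cC'}\) shows that \(D\) commutes.

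Conversely, suppose every formal diagram commutes. Take two parallel morphisms in \(\Free\) and view them as a diagram in \(\Free\) itself. This diagram is formal, lifting along the section of \(\varepsilon_{\Free}\) induced by \(\eta_S\): the strict functor \(\Free\to F^{\LD}_{U(\Free)}\) determined by sending \(s\mapsto s\). The composite \(\varepsilon\circ\) (this section) is a strict functor restricting to the identity on \(S\), so by uniqueness in Definition~\ref{def: free LD} it is the identity. Hence the two morphisms coincide.

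For unitless lax LD categories, we rerun the same argument on the free lax LD category. The only change is to drop \(\aoi\) and \(\api\) from the monoids \(\fY\) and \(\fZ\), working with the submonoids generated by \(\ao\), respectively by \(\ap,\distl,\distr\), as was done for Corollary~\ref{thm: unitless lax monoidal coherence}.

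The main point to verify is that the inductive lemmas, in particular Lemmas~\ref{lemma idot-analysis} and~\ref{lemma idpar-analysis}, still close up without inverses. In those proofs the inverse cases arise only when \(h\) is a component of \(\aoi\) or \(\api\), and such atomic morphisms no longer exist in the lax setting. The remaining relations from Lemmas~\ref{lemma: pushing tau through}, \ref{lemma: pushing kappa through} and~\ref{lemma: pushing kappa through2} that involve only non-inverted generators stay within the submonoids: \eqref{eq: alpha alpha}, \eqref{eq: ap ap}, \eqref{eq: ap distl}, \eqref{eq: ap distl distr}, \eqref{eq: ap distr distr}, \eqref{eq: distl ao}, \eqref{eq: distr distl ao} and~\eqref{eq: distr distr ao}. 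The uniqueness Lemmas~\ref{lem: uniqueness lemma otimes} and~\ref{lem: uniqueness lemma partimes} only become easier, since fewer words need to be distinguished.

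One point needs care. In Lemma~\ref{lemma: induction step par}\ref{it:312a}, a component of \(\distl\) or \(\distr\) forces \(u=\api\) or \(u=\ap\), and \(\ap\) forces \(u=\api\). In the lax setting \(\api\) is not available, so these cases can only occur with \(u=\ap\) (from \(\distr\)); the base-case analysis then goes through as before.
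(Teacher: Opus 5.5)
Your proof is correct and is essentially the paper's argument: induction on \(\norm{Y}\), with Proposition~\ref{prop:normal cat}\ref{it:normal cat 1} for objects that are not products, Lemmas~\ref{prop:coherence into otimes} and~\ref{prop:coherence into par} for \(\ot\)- and \(\prt\)-products, and the lax case obtained by restricting to the submonoids generated by \(\ao\) and by \(\ap,\distl,\distr\). Your proof of the formal-diagram reformulation (via the section of \(\varepsilon\) induced by \(\eta_S\)) and your check of which relations survive without inverses are sound and fill in what the paper leaves implicit; the one loose phrase is at the end, where the surviving case (\(h\) a component of \(\distr\), \(u=\ap\)) is ruled out in the inductive step of Lemma~\ref{lemma idpar-analysis} because \(\dom(i)\) is a \(\prt\)-product, whereas Lemma~\ref{lemma: induction step par}\ref{it:312a} would need an \(\ot\)-product source, and not by a base-case argument.
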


\begin{remark}
	The same result was obtained with different techniques in \cite[\S 7]{DP04}.
\end{remark}

\begin{proof}[Proof of Theorem~\ref{thm: coherence}]
	We prove by induction on \(\norm{Y}\) that every object~\(Y\) of \(\cC\) is thin. 
	The base case is an immediate consequence of 
	Proposition~\ref{prop:normal cat}\ref{it:normal cat 1}. In the inductive step, we apply Lemmas~\ref{prop:coherence into otimes}
	and~\ref{prop:coherence into par}. 
	A simplified version of this argument applies to unitless lax LD categories by replacing the free monoid \(\fY\) from Subsection~\ref{subs:morphot} with its submonoid generated by \(\ao\) alone, and the free monoid \(\fZ\) from Subsection~\ref{subs:parmor} with its submonoid generated by \(\ap,\distl,\distr\) alone.
\end{proof}

As mentioned in the introduction (Subsection \ref{subsec:unitsbreak}), it is \emph{not} true that any formal diagram in a linearly distributive category \emph{with units} commutes.

\begin{example}\label{ex:counterex coherence 2}
Let \(\cC\) be an LD category equipped with unit objects $1$ and $K$ for the monoidal products $\ot$ and $\partimes$, respectively. Tensoring with the wrong unit yields endofunctors \(L= K \ot -\) and \(R= 1 \partimes -\). These form a pair of adjoint functors \(L\dashv R\) with counit and unit
\begin{align*}
	LR(A) \sxlongrightarrow{\distl_{K,1,A}} {(K \ot 1)} \partimes A \Simeq A \,,
	\qquad \text{and} \qquad
	A \Simeq (1 \partimes K)\ot A \sxlongrightarrow{\distr_{1,K,A}} RL(A) \,,
\end{align*}
where the unlabeled isomorphisms are unit constraints. The snake relations follow from the triangle axioms relating distributors and unitors in LD categories with units; see \cite{CS97}.

The counit yields two generally distinct natural transformations, depicted in Figure~\ref{fig:capcap}.

	\begin{figure}[H]
	\centering
	\begin{tikzpicture}
		\draw[line width=1] (5.1,0) to[out=90,in=180] (5.5,0.8)
		to[out=0,in=90] (5.9,0);
		\node at (4.85,0) {$L$};
		\node at (5.65,0) {$R$};
		\draw[line width=1] (4.5,0) to (4.5,3);
		\draw[line width=1] (3.5,0) to (3.5,3);
		\node at (3.25,0) {$L$};
		\node at (4.25,0) {$R$};
		\node at (7.4,1.5) {$\Neq$};
		\draw[line width=1] (9,0) to[out=90,in=180] (9.4,0.8)
		to[out=0,in=90] (9.9,0);
		\node at (8.85,0) {$L$};
		\node at (9.65,0) {$R$};
		\draw[line width=1] (11.5,0) to (11.5,3);
		\draw[line width=1] (10.5,0) to (10.5,3);
		\node at (10.25,0) {$L$};
		\node at (11.25,0) {$R$};
	\end{tikzpicture}
	\caption{Two natural transformations $(L\circ R)^2\to L \circ R$.}
	\label{fig:capcap}
\end{figure}

To make this explicit, consider the LD category of bimodules over the three-dimensional \(\CC\)-algebra
\(S:=\CC[x,y]/\langle x^2,y^2,xy \rangle\), with the two tensor products introduced in Example
\ref{ex: Abimod2}.

This algebra does not admit the structure of a Frobenius algebra. In particular, the unit $S$ of $\ot$ and $S^*$ of $\prt$ are not isomorphic bimodules, and consequently the functors $L$ and $R$
are not isomorphic to the identity. Fix a basis \(\{1,x,y\}\) of \(S\) with dual basis \(\{1^{\ast},x^{\ast},y^{\ast}\}\). Evaluating the two natural transformations in Figure~\ref{fig:capcap} 
on the unit object \(1= {_SS_S}\) yields two distinct morphisms $LRLR(1) \to LR(1)$. Indeed, their values on the element
\begin{equation}
	[x^{\ast}\ot_{\CC}(x\ot_{\CC}[x^{\ast}\ot_{\CC}(y\ot_{\CC}y)])]
	\;\in\; 
	LRLR(1) \eq S^{\ast} \ot (S \prt {(S^{\ast} \ot (S \prt S))})
\end{equation}
are
\begin{equation}
x^{\ast}(y)\cdot[x^{\ast}\ot_{\CC}(x\ot_{\CC}y)]
\eq
0
\;\neq\;
[x^{\ast}\ot_{\CC}(y\ot_{\CC}y)]
 \eq
 x^{\ast}(x)\cdot[x^{\ast}\ot_{\CC}(y\ot_{\CC}y)]
 \,.
 \end{equation}

Many similar counterexamples can be constructed. Notably, the same string diagrams as in Figure~\ref{fig:capcap}, when interpreted in a different bicategory, also yield counterexamples to coherence for duoidal categories \cite[Prop. 3.1.6]{Ro24}.
\end{example}

\subsection{Frobenius LD functors}\label{sec: coh unitless Frob LD}
In this subsection, we fix a set \(S\) and let \(F\colon \cC \to \cD\) denote the free Frobenius LD functor on \(S\) from Construction \ref{con: free Frob LD fun}. In particular, all morphisms of \(\cD\) are \(F\)-elementary. 
Choose a rank function \(X\mapsto \|X\|_{\cC}\) on \(\cC\) satisfying~\eqref{eq:C-rank}.
The universal property of the free LD category~\(\cD^-\) provides a 
rank function  \(\|\cdot\|\colon \Ob(\cD)\longrightarrow \NN_{>0}\) 
satisfying \(\|F(X)\|=\|X\|_{\cC}\) for all \(X\in \cC\) and 
\begin{equation}
	\|A\ot B\|\eq\|A\prt B\|\eq\|A\|+\|B\| \,,
\end{equation} 
for all \(A, B\in\cD\). We have 
\begin{equation}
	\|F(X)\ot F(Y)\|\eq\|F(X\ot Y)\| 
	\quad \text{ and } \quad 
	\|F(X)\prt F(Y)\|\eq\|F(X\prt Y)\| \,,
\end{equation} 
for all \(X, Y\in\cC\), because all four terms simplify to \(\|X\|_{\cC}+\|Y\|_{\cC}\).
Consequently, all morphisms of \(\cD\) are rank-preserving. 
	
\begin{lemma}\label{lem:multmax1}
	For every \(A\in\cD\), there is at most one multiplicative morphism with source~\(A\).
\end{lemma}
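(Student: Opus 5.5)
We will prove, by induction on \(n\), that every \(f\in\fm_n\) is the unique multiplicative morphism with source \(\dom(f)\). The claim to establish is: if \(f,f'\) are multiplicative with \(\dom(f)=\dom(f')=A\), then \(f=f'\). Since \(\cD\) is the target of the free Frobenius LD functor, it is tidy and \(F\) is tidy. Hence every object of \(\cD\) is of exactly one of three kinds: an \(\ot\)\kern0.1em-product, a \(\prt\)-product, or (possibly) of the form \(F(X)\). The symbols \(F(X)\) in \(T\) are distinct formal symbols indexed by \(X\), so \(F(X)=F(X')\) forces \(X=X'\).

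We argue by induction on the length of the object \(A\), that is, the number of symbols from \(T\) occurring in \(A\) as a word in \(\cD^-\); the rank would also work. There are three cases according to the shape of \(A\).

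\emph{Case \(A=F(X)\).} This object is not an \(\ot\)\kern0.1em-product, while every element of \(\fm_n\) with \(n\ge1\) has source \(\dom(f')\ot\dom(f'')\), an \(\ot\)\kern0.1em-product. So any multiplicative morphism out of \(A\) lies in \(\fm_0\) and is therefore \(\id_{F(X)}\). It is unique.

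\emph{Case \(A=A'\ot A''\).} No element of \(\fm_0\) has this source, because \(F\) is tidy. So both morphisms have the form \(f=\mu_{X',X''}\circ(f'\ot f'')\) and \(g=\mu_{Y',Y''}\circ(g'\ot g'')\) with \(f',f'',g',g''\) multiplicative. Tidiness of \(\cD\) gives \(\dom(f')=A'=\dom(g')\) and \(\dom(f'')=A''=\dom(g'')\). The induction hypothesis, applied to the smaller objects \(A'\) and \(A''\), yields \(f'=g'\) and \(f''=g''\). It remains to check that the \(\mu\)-components agree. We have \(\codom(f')=F(X')\) and \(\codom(g')=F(Y')\), and \(f'=g'\) gives \(F(X')=F(Y')\), hence \(X'=Y'\); likewise \(X''=Y''\). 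Therefore \(f=g\).

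\emph{Case \(A\) a \(\prt\)-product.} No multiplicative morphism has such a source, since sources of multiplicative morphisms are either \(F(X)\) or \(\ot\)\kern0.1em-products. The claim holds vacuously.

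The only delicate point is the injectivity \(F(X)=F(Y)\Rightarrow X=Y\). In the free model of Construction~\ref{con: free Frob LD fun} this holds because the objects \(F(X)\) are distinct formal symbols. I would state this observation explicitly, since the lemma is proved only for this free model. A second subtlety is that the induction should run on the structure of the source object rather than on \(n\), because \(f\) and \(g\) may a priori lie in different \(\fm_n\). Inducting on the source handles this automatically.
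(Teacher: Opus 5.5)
Your proof is correct and follows the same route as the paper, whose proof is just ``induction on \(\|A\|\)''; your case analysis on whether the source is of the form \(F(X)\), an \(\ot\)-product, or a \(\prt\)-product is what that one-line induction amounts to. You are also right that identifying \(\mu_{X',X''}\) with \(\mu_{Y',Y''}\) needs \(F\) to be injective on objects; this holds in the free model of Construction~\ref{con: free Frob LD fun}, and the paper states it explicitly only later, in the proof of Lemma~\ref{lem: F uniqueness prt}.
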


\begin{proof}
	By induction on \(\|A\|\).
\end{proof} 

\begin{lemma}\label{lem:FXthin}
	For every \(X\in \cC\), the object \(F(X)\in \cD\) is thin. 
\end{lemma}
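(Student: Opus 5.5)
Given two parallel morphisms $f_1,f_2\colon A\to F(X)$ in $\cD$, both are $F$-elementary because $F$ is the free Frobenius LD functor. By Proposition~\ref{prop normal F}\ref{it:normal F2} we can therefore factor
\[
f_1 \eq F(g_1)\circ h_1, \qquad f_2 \eq F(g_2)\circ h_2,
\]
with $g_1,g_2$ elementary morphisms of $\cC$ and $h_1,h_2$ multiplicative morphisms with source $A$. The plan is to show first that $h_1=h_2$, and then that $g_1=g_2$.

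By Lemma~\ref{lem:multmax1} there is at most one multiplicative morphism with source $A$. Hence $h_1=h_2=:h$, and in particular both have the same target $F(Z)$ for some $Z\in\cC$. Here $Z$ is determined by $h$, since the target of a multiplicative morphism is of the form $F(Z)$ for a uniquely determined $Z$: the free model has $F(Z)=F(Z')$ only if $Z=Z'$.

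It then follows that $g_1,g_2\colon Z\to X$ are parallel morphisms in $\cC$. Since $\cC$ is the free LD category on $S$, it is thin by Theorem~\ref{thm: coherence}, so $g_1=g_2$. Consequently $f_1=F(g_1)\circ h=F(g_2)\circ h=f_2$, and $F(X)$ is thin.

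The main obstacle is Lemma~\ref{lem:multmax1}, which the paper proves only by a brief induction on $\|A\|$. I would make this induction explicit. A multiplicative morphism with source $A$ is either $\id_{F(Y)}$, in which case $A=F(Y)$ is not an $\ot$-product, or it has the form $\mu_{X',X''}\circ(f'\ot f'')$ with $A=\dom(f')\ot\dom(f'')$. Tidiness of $F$ and of $\cD$ separates these two cases. In the second case, uniqueness of $f'$ and $f''$ follows from the inductive hypothesis applied to the factors, which have smaller rank. Their targets $F(X')$ and $F(X'')$ are then determined, and hence $\mu_{X',X''}$ is determined as well.

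A second point to check is that the target $F(Z)$ pins down $Z$. This holds because the symbols $F(Z)$ are distinct formal generators in Construction~\ref{con: free Frob LD fun}, so $F$ is injective on objects.
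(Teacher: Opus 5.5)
Your proof is correct and follows the paper's argument exactly: you factor both morphisms via Proposition~\ref{prop normal F}\ref{it:normal F2}, identify the multiplicative parts using Lemma~\ref{lem:multmax1}, and conclude with the thinness of $\cC$ from Theorem~\ref{thm: coherence}. Both of your additions are sound: the injectivity of $F$ on objects, which is needed so that $g_1,g_2$ really are parallel (the paper leaves this implicit here but invokes it explicitly in Lemma~\ref{lem: F uniqueness prt}), and the explicit rank induction behind Lemma~\ref{lem:multmax1}.
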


\begin{proof}
	 Let \(A\in \cD\), and let \(f_1, f_2\colon A\longrightarrow F(X)\) be parallel morphisms.
	
	Proposition~\ref{prop normal F}\ref{it:normal F2} yields 
	factorisations \(f_1=F(g_1)\circ h_1\) and \(f_1=F(g_2)\circ h_2\) such that~\(h_1\) and~\(h_2\) are multiplicative and~\(g_1\) and~\(g_2\) are morphisms in \(\cC\). Since \(\dom(h_1)=A=\dom(h_2)\), Lemma \ref{lem:multmax1} implies~\(h_1=h_2\). Therefore, \(g_1\) and \(g_2\) are parallel morphisms and Theorem~\ref{thm: coherence} shows \(g_1=g_2\). Altogether, we find \(f_1=f_2\).
\end{proof}

\begin{lemma}\label{lem:otthin}
	If \(A', A''\in\cD\) are thin, then so is \(A'\ot A''\in \cD\).
\end{lemma}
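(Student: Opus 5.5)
We mirror the proof of Lemma~\ref{prop:coherence into otimes}, now inside \(\cD\). Let \(f_1,f_2\colon X\to A'\ot A''\) be parallel morphisms. All morphisms of \(\cD\) are \(F\)-elementary, so Proposition~\ref{prop normal F}\ref{it:normal F3} shows that both are \(\Idd^{\ot}_{\cD}\)-analysable. Thus there are identities \(i,j\in\Idd^{\ot}_{\cD}\), words \(u,v\in\fY\), vectors \(\vect{A},\vect{B}\), and morphisms \(g_1,h_1,g_2,h_2\) with
\[
f_1=(g_1\ot h_1)\circ\ta^i_u(\vect{A}),\qquad f_2=(g_2\ot h_2)\circ\ta^j_v(\vect{B}),
\]
where \(g_1\colon L^i_u(\vect{A})\to A'\), \(h_1\colon R^i_u(\vect{A})\to A''\), and similarly for \(g_2,h_2\).

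Next we compare the two factorisations. Since all morphisms of \(\cD\) preserve the rank \(\|\cdot\|\), we get
\[
\|L^i_u(\vect{A})\|=\|A'\|=\|L^j_v(\vect{B})\|.
\]
We also have \(H^i_u(\vect{A})=X=H^j_v(\vect{B})\). We then need the analogue of Lemma~\ref{lem: uniqueness lemma otimes} in \(\cD\), which gives \(i=j\), \(u=v\) and \(\vect{A}=\vect{B}\).

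\emph{Main obstacle.} The only step needing care is verifying this uniqueness lemma in \(\cD\). Its proof used only three ingredients, each of which holds here:
\begin{itemize}
\item[\textup{(1)}] tidiness of \(\cD\) with respect to \(\ot\), through Lemma~\ref{lemma: H is injective on objects};
\item[\textup{(2)}] the recursive description of \(H^i_u, L^i_u, R^i_u\) from Definition~\ref{dfn:2205};
\item[\textup{(3)}] additivity and positivity of the rank on \(\ot\)-products.
\end{itemize}
Ingredient (1) holds because \(\cD\) is tidy. Ingredient (2) is purely formal. Ingredient (3) holds because \(\|\cdot\|\) on \(\cD\) takes values in \(\NN_{>0}\) and satisfies \(\|A\ot B\|=\|A\|+\|B\|\). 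Hence the table comparing \(\|L^i_u(\vect{A})\|\) with \(\|\Lambda\|\), where \(X=\Lambda\ot\Rho\), and the induction on \(|u|\) carry over verbatim. The objects \(F(Y)\) behave simply as atoms for this argument.

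Once \(i=j\), \(u=v\) and \(\vect{A}=\vect{B}\) are known, \(g_1,g_2\) are parallel morphisms into \(A'\), and \(h_1,h_2\) are parallel morphisms into \(A''\). Thinness of \(A'\) and \(A''\) gives \(g_1=g_2\) and \(h_1=h_2\), and therefore \(f_1=f_2\).
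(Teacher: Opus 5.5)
Your proposal is correct and matches the paper's proof: it likewise obtains both factorisations from Proposition~\ref{prop normal F}\ref{it:normal F3}, transfers the uniqueness argument of Lemma~\ref{lem: uniqueness lemma otimes} to the rank function on \(\cD\), and concludes from the thinness of \(A'\) and \(A''\). Your explicit check that this uniqueness argument needs only the tidiness of \(\cD\) and the positivity and additivity of the rank on \(\ot\)-products spells out what the paper leaves implicit.
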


\begin{proof}
	The proof is analogous to the one of Lemma~\ref{prop:coherence into otimes}.  
	Proposition~\ref{prop normal F}\ref{it:normal F3} provides the required factorisations, and the uniqueness argument based on Lemma~\ref{lem: uniqueness lemma otimes} carries over to the present rank function.
\end{proof}

As usual, one has to work harder for \(\prt\)-products. Here we need the following generalisation of Lemma~\ref{lem: uniqueness lemma partimes}.

\begin{lemma}\label{lem: F uniqueness prt}
	If, for morphisms \(f, g\in\Idd^{\prt}_{\cD}\cup\fd\), 
	words \(u,v\in \fZ\), and vectors \(\vect{A}\in \cC^{\lvert u\rvert}\), 
	\(\vect{B}\in \cC^{\lvert v\rvert}\),
	we have \(H_u^f(\vect{A})=H_v^{g}(\vect{B})\)  
	and \(\norm{L_u^f(\vect{A})}=\norm{L_v^{g}(\vect{B})}\), 
	then \(u=v\), \(f=g\), and \(\vect{A}=\vect{B}\). 
\end{lemma}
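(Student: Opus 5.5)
We follow the proof of Lemma~\ref{lem: uniqueness lemma partimes}, which is itself modelled on Lemma~\ref{lem: uniqueness lemma otimes}. We first show \(u=v\) by induction on \(|u|\), and then deduce \(f=g\) and \(\vect{A}=\vect{B}\) (the vectors should be read as lying in powers of \(\cD\)). Write \(X=H_u^f(\vect{A})=H_v^g(\vect{B})\).

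\emph{Classifying the shape of \(X\).} There are three cases, which we record in tables.
\begin{itemize}
\item If \(X=\Lambda\ot\Rho\), then either \(\ter(u)\in\{\distl,\distr\}\), or \(u=\e\). The case \(u=\e\) is impossible: sources of elements of \(\Idd^{\prt}_{\cD}\) are \(\prt\)-products, and sources of elements of \(\fd\) are of the form \(F(\cdot)\), which by tidiness of \(F\) is not an \(\ot\)-product.
\item If \(X=\Lambda\prt\Rho\), then \(u=\e\) with \(f\in\Idd^{\prt}_{\cD}\), or \(\ter(u)\in\{\ap,\api\}\). Again \(u=\e\) with \(f\in\fd\) is excluded by tidiness.
\item If \(X=F(Z)\) for some \(Z\in\cC\), then necessarily \(u=\e\) and \(f\in\fd\), because for nonempty \(u\) the object \(H_u^f\) is a tensor product.
\end{itemize}
The rank inequalities of the earlier tables hold verbatim. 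If \(\ter(u)=\distl\) or \(\ap\), then \(A_1=\Lambda\) and \(\norm{L_u^f(\vect{A})}>\|\Lambda\|\). If \(\ter(u)=\distr\) or \(\api\), then \(A_{|u|}=\Rho\), and the rank identity \(\|L\|+\|R\|=\|X\|\) (valid because \(f\) is rank-preserving) gives \(\norm{L_u^f(\vect{A})}<\|\Lambda\|\). If \(u=\e\) and \(X\) is a \(\prt\)-product, then \(f\) is an identity and \(L=\Lambda\).

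\emph{Induction for \(u=v\).} Since \(X\) is the same object on both sides, the tables force \(u\) and \(v\) to be simultaneously empty or simultaneously nonempty, and in the latter case to share their terminal letter. For example, if \(X\) is an \(\ot\)-product, comparing \(\norm{L}\) with \(\|\Lambda\|\) separates \(\distl\) from \(\distr\). If \(\ter(u)=\ter(v)\), we strip that letter exactly as in Lemma~\ref{lem: uniqueness lemma otimes}: the stripped heads agree, being \(\Rho\) or \(\Lambda\), and the \(L\)-ranks agree after subtracting \(\|\Lambda\|\) or leaving them unchanged. The induction hypothesis then applies.

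\emph{Deducing \(f=g\) and \(\vect{A}=\vect{B}\).} Once \(u=v\), the injectivity Lemma~\ref{lemma: Hpar is injective on objects} gives \(\vect{A}=\vect{B}\) and \(\dom(f)=\dom(g)\). It remains to show \(f=g\) from equal sources together with \(\norm{L_\e^f}=\norm{L_\e^g}\). This is the main new point.
\begin{itemize}
\item If \(\dom(f)\) is a \(\prt\)-product, both \(f\) and \(g\) are the identity on it.
\item Otherwise \(\dom(f)=F(Z)\), and \(f=\Delta\circ F(\ka^i_w(\vect{X}))\), \(g=\Delta\circ F(\ka^{i'}_{w'}(\vect{X}'))\). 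Since \(F\) is injective on objects in the free model, \(H^i_w(\vect{X})=Z=H^{i'}_{w'}(\vect{X}')\). Moreover, \(\|L^f_\e\|_{\cD}=\|L^i_w(\vect{X})\|_{\cC}\) and likewise for \(g\). Lemma~\ref{lem: uniqueness lemma partimes} in \(\cC\) therefore yields \(i=i'\), \(w=w'\) and \(\vect{X}=\vect{X}'\), hence \(f=g\).
\end{itemize}

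The main obstacle is this last step, together with carefully ruling out the mixed base cases via tidiness of \(F\) and of \(\cD\). The rest is a routine transcription of the earlier proofs.
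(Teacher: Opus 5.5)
Your proposal is correct and follows essentially the same route as the paper. The rank tables force \(u=v\) by stripping common terminal letters, Lemma~\ref{lemma: Hpar is injective on objects} gives \(\vect{A}=\vect{B}\) and \(\dom(f)=\dom(g)\), and for \(f,g\in\fd\) injectivity of \(F\) on objects together with Lemma~\ref{lem: uniqueness lemma partimes} in \(\cC\) gives \(f=g\). One step you leave implicit is that each stripping preserves the rank equality, so that \(\|L^f_{\e}(\ast)\|=\|L^g_{\e}(\ast)\|\) holds at the end; the paper states this explicitly, and it does follow from your induction.
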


\begin{proof}
	We proceed similarly to the proof of Lemma~\ref{lem: uniqueness lemma partimes}. For brevity, set \(C=H_u^f(\vect{A})=H_v^{g}(\vect{B})\). 
	
	\smallskip

	If \(u\ne\e\), then, as in Lemma~\ref{lem: uniqueness lemma partimes}, the object \(C\) is either an \(\ot\)\kern0.1em-product or a \(\partimes\)-product. 
	
	All five rows of the two tables in Lemma~\ref{lem: uniqueness lemma partimes}, except the one involving \(u=\e\), remain valid (with \(i\) replaced by \(f\)).
	
	If \(u=\e\), then either \(f\in \Idd^{\prt}_{\cD}\) and \(C\) is a \(\prt\)-product, or \(f\in\fd\) and \(C=\dom(f)\) is of the form \(F(X)\) for some \(X\in\cC\). The inductive argument establishing \(u=v\) carries through, and 
	Lemma~\ref{lemma: Hpar is injective on objects} implies \(\vect{A}=\vect{B}\) and \(\dom(f)=\dom(g)\). Moreover, at each step of the induction, when we cancel the common last letter of \(u\) and \(v\), the assumption 
	\(\norm{L_u^f(\vect{A})}=\norm{L_v^{g}(\vect{B})}\) is preserved. Thus, in the end, we obtain
	\begin{equation}\label{eq:rnkLempty}
	\|L_{\e}^f(\ast)\| \eq \|L_{\e}^g(\ast)\|\,.
	\end{equation}
	
	\smallskip
	
	We still need to show that \(f=g\). If at least one of \(f\) or \(g\) lies in \(\Idd^{\prt}_{\cD}\), then \(\dom(f)=\dom(g)\) is a \(\prt\)-product, so both \(f\) and \(g\) belong to \(\Idd^{\prt}_{\cD}\), and thus \(f=g\) follows. 
	
	\smallskip
	
	It remains to consider the case \(f, g\in\fd\).  In this case, we find 
	identities \(i, j\in\Idd^{\prt}_{\cC}\), words \(x, y\in\fZ\), 
	and vectors \(\vect{A}\in\cC^{|x|}\), \(\vect{B}\in\cC^{|y|}\) such that 
	\begin{equation}
		f\eq\Delta\circ F\bigl(\ka_x^{i}(\vect{X})\bigr)
		\quad \text{ and } \quad 
		g\eq\Delta\circ F\bigl(\ka_y^{j}(\vect{Y})\bigr)\,.
	\end{equation}
	Since \(F\) is injective on objects by Construction~\ref{con: free Frob LD fun}~\ref{it: con T}, 
	\begin{equation}
		F\bigl(H^{i}_x(\vect{X})\bigr)
		\eq
		\dom(f)
		\eq
		\dom(g)
		\eq
		F\bigl(H^{j}_y(\vect{Y})\bigr)
	\end{equation}
	implies \(H^{i}_x(\vect{X})=H^{j}_y(\vect{Y})\). 
	Due to \(\codom(f)=F(L^{i}_x(\vect{X}))\prt F(R^{i}_x(\vect{X}))\),
	we have \(L_{\e}^f(\ast)=F(L^{i}_x(\vect{X}))\)
	and, similarly, \(L_{\e}^g(\ast)=F(L^{j}_y(\vect{Y}))\). 
	Thus, the equality of ranks \eqref{eq:rnkLempty} yields
	\begin{equation}
		\|F\bigl(L^{i}_x(\vect{X})\bigr)\|\eq\|F\bigl(L^{j}_y(\vect{Y})\bigr)\|\,.
	\end{equation}
	
	By the definition of the rank function \(\|\cdot\|\), this implies \begin{equation}
		\|L^{i}_x(\vect{X})\|_{\cC}\eq\|L^{j}_y(\vect{Y})\|_{\cC}\,.
	\end{equation}
	Applying Lemma~\ref{lem: uniqueness lemma partimes} in \(\cC\), 
	we conclude \(i=j\), \(x=y\), and \(\vect{X}=\vect{Y}\).
	Consequently, \(f=g\).  
\end{proof}

\begin{lemma}\label{lem:prtthin}
	If \(A', A''\in\cD\) are thin, then so is \(A'\prt A''\).
\end{lemma}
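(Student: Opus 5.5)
We follow the template of Lemma~\ref{prop:coherence into otimes}, with Proposition~\ref{prop normal F}\ref{it:normal F4} supplying the factorisations and Lemma~\ref{lem: F uniqueness prt} supplying uniqueness. Let \(f_1,f_2\colon X\to A'\prt A''\) be parallel morphisms in \(\cD\); both are \(F\)-elementary.

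First, Proposition~\ref{prop normal F}\ref{it:normal F4} shows that each \(f_k\) is \(\Idd^{\prt}_{\cD}\)-analysable or \(\fd\)-analysable. In either case we write
\begin{equation*}
	f_1 \eq (g_1\prt h_1)\circ\ka^{c_1}_{u}(\vect{A})\,,\qquad
	f_2 \eq (g_2\prt h_2)\circ\ka^{c_2}_{v}(\vect{B})\,,
\end{equation*}
with \(c_1,c_2\in\Idd^{\prt}_{\cD}\cup\fd\), words \(u,v\in\fZ\), vectors \(\vect{A},\vect{B}\), and morphisms \(g_k\colon L^{c_k}\to A'\) and \(h_k\colon R^{c_k}\to A''\). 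We do not need to know in advance which of the two kinds of analysability occurs, because Lemma~\ref{lem: F uniqueness prt} handles the mixed set \(\Idd^{\prt}_{\cD}\cup\fd\) uniformly.

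Next, the sources agree: \(H^{c_1}_u(\vect{A})=X=H^{c_2}_v(\vect{B})\). All morphisms of \(\cD\) are rank-preserving, so \(g_1\) and \(g_2\) give
\begin{equation*}
	\norm{L^{c_1}_u(\vect{A})}\eq\norm{A'}\eq\norm{L^{c_2}_v(\vect{B})}\,.
\end{equation*}
Lemma~\ref{lem: F uniqueness prt} then yields \(c_1=c_2\), \(u=v\) and \(\vect{A}=\vect{B}\). The lemma is stated with vectors in \(\cC\), but its proof works verbatim for vectors in \(\cD\), which is the setting here.

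Consequently \(g_1,g_2\) are parallel morphisms into \(A'\), and \(h_1,h_2\) are parallel morphisms into \(A''\). Thinness of \(A'\) and \(A''\) gives \(g_1=g_2\) and \(h_1=h_2\), hence \(f_1=f_2\).

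The main obstacle has already been absorbed into Lemma~\ref{lem: F uniqueness prt}: ruling out that one factorisation goes through an identity and the other through some \(\Delta\circ F(\ka^i_x(\vect X))\), and that two different elements of \(\fd\) occur. What remains is to check carefully that the rank of \(L\) is indeed determined by the target factor \(A'\), i.e.\ that \(\codom(\ka^{c}_u(\vect A))=L^c_u(\vect A)\prt R^c_u(\vect A)\) and that tidiness identifies the first factor with the domain of \(g_k\).
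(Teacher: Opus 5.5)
Your proposal is correct and follows the paper's own argument: the paper proves this lemma by combining Proposition~\ref{prop normal F}\ref{it:normal F4} with Lemma~\ref{lem: F uniqueness prt} along the template of Lemma~\ref{prop:coherence into otimes}, exactly as you do. You are also right that Lemma~\ref{lem: F uniqueness prt} has to be read with vectors in \(\cD\) rather than \(\cC\), because \(H^f_u\) for a morphism \(f\) of \(\cD\) takes its arguments in \(\cD^{|u|}\).
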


\begin{proof}
	This follows from Proposition~\ref{prop normal F}\ref{it:normal F4}
	and Lemma~\ref{lem: F uniqueness prt}. The argument is analogous to that of Lemma~\ref{prop:coherence into otimes}.
\end{proof}

\begin{theorem}\label{thm: funcoherence}
The target category of the free unitless Frobenius LD functor on a set is thin. Equivalently, given a unitless Frobenius LD functor \(F\), every \(F\)-formal diagram commutes.
\end{theorem}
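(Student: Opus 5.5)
The plan mirrors the proof of Theorem~\ref{thm: coherence}. Let \(F\colon\cC\to\cD\) be the free Frobenius LD functor on \(S\) from Construction~\ref{con: free Frob LD fun}, with the rank function \(\|\cdot\|\) on \(\cD\) fixed above. We show by induction on \(\|A\|\) that every object \(A\in\cD\) is thin.

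By construction of \(\cD\) (the free LD category \(\cD^-\) on the symbols \(F(X)\), enlarged only by morphisms), every object of \(\cD\) falls into exactly one of three classes:
\begin{enumerate}[label=\rmlabel]
\item a symbol \(F(X)\);
\item an \(\ot\)-product \(A'\ot A''\);
\item a \(\prt\)-product \(A'\prt A''\).
\end{enumerate}
These classes are mutually exclusive and the decompositions are unique, because \(\cD\) and \(F\) are tidy. So there is no separate ``neither'' case as in Proposition~\ref{prop:normal cat}\ref{it:normal cat 1}, and Proposition~\ref{prop normal F}\ref{it:normal F1} is vacuous here.

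The case analysis runs as follows.
\begin{enumerate}[label=\rmlabel]
\item If \(A=F(X)\), then \(A\) is thin by Lemma~\ref{lem:FXthin}. That lemma already rests on Theorem~\ref{thm: coherence} for \(\cC\) and on Lemma~\ref{lem:multmax1}, so it needs no induction hypothesis. This also covers the base case of minimal rank.
\item If \(A=A'\ot A''\), then \(\|A'\|,\|A''\|<\|A\|\) because ranks are positive. The induction hypothesis makes \(A'\) and \(A''\) thin, and Lemma~\ref{lem:otthin} gives that \(A\) is thin.
\item If \(A=A'\prt A''\), the same argument works with Lemma~\ref{lem:prtthin}.
\end{enumerate}
This proves the first sentence of the theorem.

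For the equivalence with the formal statement, let \(G\colon\cC'\to\cD'\) be any Frobenius LD functor and \(D\colon J\to\cD'\) a \(G\)-formal diagram. By definition, \(D=\varepsilon^2_G\circ D^\sharp\) with \(D^\sharp\colon J\to\cD\), where \(\cD\) is the target of \(F^{\uFrob}_{U(G)}\). By the first part this target is thin, so any two parallel composites in \(D^\sharp\) are equal. Applying the functor \(\varepsilon^2_G\) preserves these equalities, so \(D\) commutes.

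Conversely, suppose every \(F\)-formal diagram commutes for every \(F\). Take \(F\) to be the free functor itself; then \(\varepsilon^2_F\) is the identity up to the canonical identification, so every parallel pair in the target category forms a formal diagram, which commutes, and the target is thin.

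The main obstacle was already handled in the preceding lemmas: it is the \(\prt\)-case, which needs the two-family normal form of Proposition~\ref{prop normal F}\ref{it:normal F4} together with the uniqueness result Lemma~\ref{lem: F uniqueness prt}. What remains to check here is only that the three-way case split on objects is exhaustive. That is immediate from the recursive description of the objects of \(\cD^-\), on which \(\cD\) has the same object set.
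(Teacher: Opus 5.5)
Your proof is correct and is essentially the paper's argument. The paper runs the same induction on \(\|A\|\), phrased as a minimal counterexample via Lemmas~\ref{lem:FXthin}, \ref{lem:otthin} and~\ref{lem:prtthin}. It disposes of objects that are neither tensor products nor of the form \(F(X)\) using Proposition~\ref{prop normal F}\ref{it:normal F1}, a case you rightly observe is vacuous in the free model.

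One small correction concerns the converse of the equivalence, which the paper does not spell out. \(\varepsilon^2_F\) is not the identity, since the counit's source is the free functor on the larger set \(U(F)=\Ob(\cC)\). What you need instead comes from the triangle identity, which gives a strict morphism \(\iota=(\iota^1,\iota^2)\) with \(\varepsilon^2_F\circ\iota^2=\id_{\cD}\). Hence every diagram \(D\) in \(\cD\) factors as \(\varepsilon^2_F\circ(\iota^2\circ D)\) and is \(F\)-formal.
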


\begin{remark}
Since the target category of the free Frobenius LD functor on a set is unique up to categorical isomorphism, thinness is independent of the model of such a functor.
	\end{remark}

\begin{proof}
	For a contradiction, let \(A\) be an object in \(\cD\) which is not thin, chosen such that \(\|A\|\)
	is minimal. Owing to Lemmas~\ref{lem:FXthin},~\ref{lem:otthin}, and~\ref{lem:prtthin}, the object 
	\(A\) is neither a tensor product nor of the 
	form \(F(X)\) for any \(X\in \cC\). By Proposition~\ref{prop normal F}\ref{it:normal F1}, the only morphism into \(A\) is therefore an identity, contradicting that \(A\) is not thin.
\end{proof}

Recall that an \emph{LD Frobenius algebra} \cite[\S 3]{DeS25}, see also \cite{FSSW25b}, is a Frobenius LD functor with source category the terminal category \(\ast\). In the following corollary, we consider a \emph{unitless LD Frobenius algebra} in a LD category \(\cC\), i.e., a unitless Frobenius LD functor \(F\colon \ast \to \cC\). We can identify such an \(F\) with an object \(A=F(\ast)\) equipped with multiplication \(\mu\colon A\ot A \to A\) and comultiplication \(\Delta\colon A \to A \prt A\) satisfying specializations of the equations from Definition \ref{def: Vorsicht Funktor}. The following \emph{unitless spider theorem} is a modification of the well-known planar spider theorem \cite[\S 5.2]{HeuVic19}, \cite{MaRie21} to the unitless linearly distributive setting.

\begin{corollary}\label{cor:spider}
	Let \(A\) be a unitless LD Frobenius algebra in \(\cC\), and let \(m,n\in \NN_{>0}\). Every morphism \(A^{\ot m}\to A^{\prt n}\) built from finitely many multiplications \(\mu\), comultiplications \(\Delta\) and identities using the operations \(\circ\), \(\ot\), and \(\prt\), is equal to the following normal form:
	\begin{equation}
		\Delta^{(n)} \circ \cdots\circ \Delta^{(1)} \circ \mu^{(1)} \circ \cdots\circ \mu^{(m)}.
	\end{equation}
	Here, \(A^{\ot 1}=A^{\prt 1}=A\), \(A^{\ot k}=A \ot A^{\ot k-1}\), and \(A^{\prt k}=A \prt A^{\prt k-1}\) for \(k>1\). Moreover, \(\mu^{(1)}=\Delta^{(1)}=\id\), \(\mu^{(2)}=\mu\), \(\Delta^{(2)}=\Delta\), and for \(k>2\), \(\mu^{(k)}=A\ot \mu^{(k-1)}\), \(\Delta^{(k)}=A\prt \Delta^{(k-1)}\).
\end{corollary}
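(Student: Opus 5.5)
The plan is to deduce the corollary from Theorem~\ref{thm: funcoherence}, applied to the free unitless Frobenius LD functor on a one-element set $S=\{\ast\}$. Let $F\colon\cC_S\to\cD$ be this free functor, so $\cC_S=\Free$ is the free LD category on $\{\ast\}$. The given algebra is a unitless Frobenius LD functor $G\colon \ast\to\cC$. Let $P\colon \Free\to\ast$ be the unique strict Frobenius LD functor; it exists because the terminal category is a (trivially) LD category. Then $G\circ P$ is a Frobenius LD functor $\Free\to\cC$, by composing a strict functor with a Frobenius one. By the universal property of Definition~\ref{def: free Frob}, there is a morphism $(H,I)\colon F\to G\circ P$ in $\uFrob$, and its second component is a strict Frobenius LD functor $I\colon\cD\to\cC$. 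It satisfies
\[
I\bigl(F(X)\bigr)=A,\qquad I(\mu_{X,Y})=\mu,\qquad I(\Delta_{X,Y})=\Delta
\]
for all $X,Y$.

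Next, I lift the given morphism. Let $f\colon A^{\ot m}\to A^{\prt n}$ be built from $\mu$, $\Delta$, identities, $\circ$, $\ot$ and $\prt$. I claim that by structural induction on the expression for $f$ one obtains a morphism $f^\sharp$ in $\cD$ with $I(f^\sharp)=f$. The main issue is choosing compatible objects $F(X)$ at each stage. To handle it, I will lift the whole expression tree at once. Each occurrence of $A$ in a source or target word is assigned a formal object of $\Free$, namely a bracketed $\ot/\prt$-word in $\ast$. A generator $\mu$ is lifted to $\mu_{X,Y}\colon F(X)\ot F(Y)\to F(X\ot Y)$. A generator $\Delta$ with source $F(Z)$ forces $Z$ to be a $\prt$-product $X\prt Y$; if it is not, I insert a suitable morphism $F(c)$, where $c$ is a structural morphism of $\Free$, since $I(F(c))=\id_A$. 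In this way the lifts compose. Composites need source and target objects to match, and this is repaired again by inserting $F(c)$ for structural $c$, which exist between any two bracketings of the same word only in the appropriate directions.

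This matching step is the main obstacle. Connecting morphisms in $\Free$ exist only in certain directions, because distributors go from $\ot$ to $\prt$. I would therefore choose the lift greedily: process the expression bottom-up, and record the $F$-label of each wire as the formal word of the $\ast$-leaves feeding into it. Every object of the form $F(X)$ that arises then has $X$ equal to a mixed word obtained by merging inputs with $\ot$ (via $\mu$) and splitting with $\prt$ (via $\Delta$). Splitting requires $X$ to be a $\prt$-product. If $X$ is an $\ot$-product, one cannot split, so instead I label the inputs differently from the start: each initial $A$ is labeled $F(W)$, where $W$ is a sufficiently large $\prt$-word, and distributors are applied inside $F$ as needed. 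The remaining point to check is that rank bookkeeping with $\|\cdot\|$ makes this consistent. I would first try the simplest option, assigning every input the label $F(\ast^{\prt N})$ for large $N$ and using Theorem~\ref{thm: coherence} in $\Free$ to supply the connecting maps $F(c)$.

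Finally, the normal form $\Delta^{(n)}\circ\cdots\circ\mu^{(m)}$ lifts in the same manner to a parallel morphism $f_0^\sharp$ with the same source and target as $f^\sharp$. Since $\cD$ is thin by Theorem~\ref{thm: funcoherence}, we get $f^\sharp=f_0^\sharp$. Applying $I$ then gives $f$ equal to the normal form.
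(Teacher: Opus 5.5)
Your overall strategy is how the corollary is meant to follow from the paper: lift to the free Frobenius LD functor $F\colon\Free\to\cD$ on one generator, use thinness of $\cD$ from Theorem~\ref{thm: funcoherence}, and push back along the strict functor $I$. The paper states the corollary without further argument, so its only real content is showing that the relevant diagram is $F$-formal. That formality check is where your proposal has a gap, in two places.

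First, existence of the lift. A $\Delta$ acting on a wire labelled $F(Z)$ needs a morphism $c\colon Z\to X\prt Y$ in $\Free$. Every morphism of $\Free$ preserves the number of $\prt$-symbols, so such a $c$ exists if and only if $Z$ contains a $\prt$. Each lifted $\Delta$ uses up one $\prt$ and may leave a $\prt$-free factor; for instance, splitting $\ast\prt W'$ gives $F(\ast)\prt F(W')$. So large input labels alone do not suffice, and you must control the splittings. For example, take $N\ge 2^{D}$, where $D$ is the number of $\Delta$'s, and always rebracket (using $\ap$, $\api$, $\distl$, $\distr$) so that both factors retain a pure $\prt$-subword with at least half as many leaves as the largest pure $\prt$-subword of $Z$. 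Then every label still contains a $\prt$ whenever a $\Delta$ is applied to it. Note also that Theorem~\ref{thm: coherence} gives uniqueness, not existence, of connecting morphisms, so it cannot supply the maps $F(c)$.

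Second, and more seriously, your claim that the normal form ``lifts in the same manner to a parallel morphism'' is unjustified, and it fails for any fixed lifting recipe. The ranks of the output labels depend on the splitting choices, and every $F(c)$ preserves rank, so a mismatch cannot be repaired by post-composition. For example, start from $F(W)\ot F(W)$ with $W$ having $N$ leaves. Every term-by-term lift of $(\id\prt\mu)\circ\distr\circ(\Delta\ot\id)$ has a first output label with at most $N-1$ leaves. Lifting $\Delta\circ\mu$ by distributing the second factor via $\distl$ instead gives one with at least $N+1$ leaves.

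So the lift of the normal form has to be built to land in $\codom(f^\sharp)=F(V_1)\prt(\cdots\prt F(V_n))$. One clean way is as follows. Let $E\colon\cD\to\Free$ be the strict LD functor that erases $F$, i.e.\ $F(X)\mapsto X$, $F(c)\mapsto c$, and $\mu,\Delta\mapsto\id$. It respects all relations of Construction~\ref{con: free Frob LD fun}; equivalently, it is induced by the universal property from $\id_{\Free}$, viewed as a Frobenius LD functor with identity $\mu$ and $\Delta$. Then $g:=E(f^\sharp)\colon W\ot(\cdots\ot W)\to V_1\prt(\cdots\prt V_n)$. Define $f_0^\sharp:=(\text{iterated }\Delta\text{'s})\circ F(g)\circ(\text{iterated }\mu\text{'s})$. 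It is parallel to $f^\sharp$ by construction, and $I(f_0^\sharp)$ equals the normal form because $I(F(g))=\id_A$. With these two points supplied, thinness of $\cD$ finishes the proof as you describe.
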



\vskip 15em

\begin{center}
    \sc Acknowledgments\\[.5em]
\end{center}

We thank Simon Lentner and Yorck Sommerhäuser for discussions about free categories, Paul-André Melliès for discussions about Grothendieck-Verdier categories, and Tony Zorman for discussions about duoidal coherence.

M.D. and C.S. acknowledge support from the DFG through the CRC 1624 \emph{Higher Structures, Moduli Spaces and Integrability}, project number 506632645. C.S. is partially funded by the Excellence Cluster EXC 2121 \emph{Quantum Universe}, project number 390833306.

\bibliographystyle{alphaurl}
\bibliography{references}

@misc{All23,
	title={Hopf algebroids and {G}rothendieck-{V}erdier duality}, 
	author={Allen, R.},
	year={2023},
	note={Preprint},
	eprint={2308.01029},
}

@article{ALSW25,
	author = {Allen, R. and Lentner, S. and Schweigert, C. and Wood, S.},
	title = {Duality structures for representation categories of vertex operator algebras and the {Feigin}-{Fuchs} boson},
	fjournal = {Selecta Mathematica. New Series},
	journal = {Sel. Math., New Ser.},
	issn = {1022-1824},
	volume = {31},
	number = {2},
	pages = {57},
	year = {2025},
}

@article {BoD13,
	AUTHOR = {Boyarchenko, M. and Drinfeld, V.},
	TITLE = {A duality formalism in the spirit of {G}rothendieck and
	{V}erdier},
	JOURNAL = {Quantum Topol.},
	FJOURNAL = {Quantum Topology},
	VOLUME = {4},
	YEAR = {2013},
	NUMBER = {4},
	PAGES = {447--489},
}

@phdthesis{Bla23,
	author = {Blanco, N.},
	title = {Bifibrations of polycategories and classical linear logic},
	year ={2023},
	school ={University of Birmingham},
	eprint = {2305.15139},
}

@incollection{BlZ20,
	author = {Blanco, N. and Zeilberger, N.},
	title = {Bifibrations of polycategories and classical linear logic},
	booktitle = {Proceedings of the 36th conference on mathematical foundations of programming semantics, MFPS XXXVI, virtual conference, June 2--6, 2020},
	pages = {29--52},
	year = {2020},
	publisher = {Amsterdam: Elsevier}
}

@article {CuLaA24,
	AUTHOR = {Curien, P.-L. and Laplante-Anfossi, G.},
	TITLE = {Topological proofs of categorical coherence},
	JOURNAL = {Cah. Topol. G\'eom. Diff\'er. Cat\'eg.},
	FJOURNAL = {Cahiers de Topologie et G\'eom\'etrie Diff\'erentielle
	Cat\'egoriques},
	VOLUME = {65},
	YEAR = {2024},
}

@article {CS97,
	AUTHOR = {Cockett, J. R. B. and Seely, R. A. G.},
	TITLE = {Weakly distributive categories (corrected version)},
	JOURNAL = {J. Pure Appl. Algebra},
	FJOURNAL = {Journal of Pure and Applied Algebra},
	VOLUME = {114},
	YEAR = {1997},
	NUMBER = {2},
	PAGES = {133--173},
}

@article{CS99,
	author = {Cockett, J. R. B. and Seely, R. A. G.},
	title = {Linearly distributive functors},
	fjournal = {Journal of Pure and Applied Algebra},
	journal = {J. Pure Appl. Algebra},
	issn = {0022-4049},
	volume = {143},
	number = {1-3},
	pages = {155--203},
	year = {1999},
}

@article{CzKQW25,
	author = {Czenky, A. and Kesten, J. and Quinonez, A. and Walton, C.},
	title = {On extended {Frobenius} structures},
	fjournal = {Theory and Applications of Categories},
	journal = {Theory Appl. Categ.},
	issn = {1201-561X},
	volume = {44},
	pages = {1218--1255},
	year = {2025},
}

@article {Deh05,
	AUTHOR = {Dehornoy, P.},
	TITLE = {Geometric presentations for {T}hompson's groups},
	JOURNAL = {J. Pure Appl. Algebra},
	FJOURNAL = {Journal of Pure and Applied Algebra},
	VOLUME = {203},
	YEAR = {2005},
	NUMBER = {1-3},
	PAGES = {1--44},
	ISSN = {0022-4049,1873-1376},
}

@misc{De26,
	title={A lifting theorem for {G}rothendieck-{V}erdier categories}, 
	author={Demirdilek, M.},
	year={2026},
	eprint={2601.14812},
	note={Preprint},
}

@misc{DeS25,
	title={{S}urface {D}iagrams for {F}robenius {A}lgebras and {F}robenius-{S}chur {I}ndicators in {G}rothendieck-{V}erdier {C}ategories}, 
	author={M. Demirdilek and C. Schweigert},
	year={2025},
	note={Accepted for publication in \emph{Higher Structures}},
}

@book{DP04,
	author = {Do{\v{s}}en, K. and Petri{\'c}, Z.},
	title = {Proof-theoretical coherence},
	fseries = {Studies in Logic (London)},
	series = {Stud. Log. (Lond.)},
	volume = {1},
	isbn = {1-904987-06-0},
	year = {2004},
	publisher = {London: King's College Publications},
	language = {English},
}

@article {Ep66,
	AUTHOR = {Epstein, D. B. A.},
	TITLE = {Functors between tensored categories},
	JOURNAL = {{I}nvent. {M}ath.},
	FJOURNAL = {{I}nventiones {M}athematicae},
	VOLUME = {1},
	YEAR = {1966},
	PAGES = {221--228},
}

@article{FlLaPo25,
	author = {Flake, J. and Laugwitz, R. and Posur, S.},
	title = {Frobenius monoidal functors from ambiadjunctions and their lifts to {Drinfeld} centers},
	fjournal = {Advances in Mathematics},
	journal = {Adv. Math.},
	issn = {0001-8708},
	volume = {475},
	pages = {59},
	note = {Id/No 110344},
	year = {2025},
}

@article{Fo08,
	author = {Forcey, S.},
	title = {Convex hull realizations of the multiplihedra},
	fjournal = {Topology and its Applications},
	journal = {Topology Appl.},
	issn = {0166-8641},
	volume = {156},
	number = {2},
	pages = {326--347},
	year = {2008},
}

@article{FSSW25b,
	author = {Fuchs, J. and Schaumann, G. and Schweigert, C. and Wood, S.},
	title = {Grothendieck-{V}erdier module categories, {Frobenius} algebras and relative {Serre} functors},
	fjournal = {Advances in Mathematics},
	journal = {Adv. Math.},
	issn = {0001-8708},
	volume = {475},
	pages = {69},
	note = {Id/No 110325},
	year = {2025},
	language = {English},
}

@article{FuSY25,
	author = {Fuchs, J. and Schweigert, C. and Yang, Y.},
	title = {String-net models for pivotal bicategories},
	fjournal = {Theory and Applications of Categories},
	journal = {Theory Appl. Categ.},
	issn = {1201-561X},
	volume = {44},
	pages = {474--543},
	year = {2025},
}

@article {Gi87,
	AUTHOR = {Girard, J.-Y.},
	TITLE = {Linear logic},
	JOURNAL = {Theoret. Comput. Sci.},
	FJOURNAL = {Theoretical Computer Science},
	VOLUME = {50},
	YEAR = {1987},
	NUMBER = {1},
	PAGES = {101},
}

@article {GuJo25,
	AUTHOR = {Gurski, N. and Johnson, N.},
	TITLE = {Universal pseudomorphisms, with applications to diagrammatic
	coherence for braided and symmetric monoidal functors},
	JOURNAL = {Compositionality},
	FJOURNAL = {Compositionality},
	VOLUME = {7},
	YEAR = {2025},
	NUMBER = {3},
	PAGES = {68},
}

@misc{Ha84,
	author = {Haiman, M.},
	title = {Constructing the associahedron},
	year = {1984},
	note = {Unpublished manuscript. Available at:\\ \url{https://math.berkeley.edu/~mhaiman/ftp/assoc/manuscript.pdf}},
}

@book{HeuVic19,
	author = {Heunen, C. and Vicary, J.},
	title = {Categories for quantum theory. {An} introduction},
	fseries = {Oxford Graduate Texts in Mathematics},
	series = {Oxf. Grad. Texts Math.},
	volume = {28},
	isbn = {978-0-19-873962-3; 978-0-19-873961-6},
	year = {2019},
	publisher = {Oxford: Oxford University Press},
}

@phdthesis{Hou07, 
	author = {Houston, R.}, 
	title = {Linear Logic without Units}, 
	year = {2007}, 
	school = {University of Manchester},
	eprint ={1305.2231},
}

@article{JaYa26,
	author = {Jaklitsch, D. and Yadav, H.},
	title = {{{\(\otimes\)}}-Frobenius functors and exact module categories},
	fjournal = {IMRN. International Mathematics Research Notices},
	journal = {Int. Math. Res. Not.},
	issn = {1073-7928},
	volume = {2026},
	number = {8},
	pages = {35},
	note = {Id/No rnag065},
	year = {2026},
}

@article {Ka93,
	AUTHOR = {Kapranov, M.},
	TITLE = {The permutoassociahedron, {M}ac {L}ane's coherence theorem and
	asymptotic zones for the {KZ} equation},
	JOURNAL = {J. Pure Appl. Algebra},
	FJOURNAL = {Journal of Pure and Applied Algebra},
	VOLUME = {85},
	YEAR = {1993},
	NUMBER = {2},
	PAGES = {119--142},
}

@article{Lap72,
	author = {Laplaza, M. L.},
	title = {Coherence for associativity not an isomorphism},
	fjournal = {Journal of Pure and Applied Algebra},
	journal = {J. Pure Appl. Algebra},
	issn = {0022-4049},
	volume = {2},
	pages = {107--120},
	year = {1972},
	language = {English},
}

@misc{Le72,
	author = {Lewis, G.},
	title = {Coherence for a closed functor},
	year = {1972},
	language = {English},
	howpublished = {Coherence in {Categories}, {Lect}. {Notes} {Math}. 281, 148-195 (1972).},
}

@article{LiWi94,
	author = {Lincoln, P. and Winkler, T.},
	title = {Constant-only multiplicative linear logic is {NP}-complete},
	fjournal = {Theoretical Computer Science},
	journal = {Theor. Comput. Sci.},
	issn = {0304-3975},
	volume = {135},
	number = {1},
	pages = {155--169},
	year = {1994},
}

@article {MaPo22,
	AUTHOR = {Malkiewich, C. and Ponto, K.},
	TITLE = {Coherence for bicategories, lax functors, and shadows},
	JOURNAL = {Theory Appl. Categ.},
	FJOURNAL = {Theory and Applications of Categories},
	VOLUME = {38},
	YEAR = {2022},
	PAGES = {Paper No. 12, 328--373},
}

@misc{MaRie21,
	author = {Majid, S. and Rietsch, K.},
	title = {Planar spider theorem and asymmetric {Frobenius} algebras},
	year = {2021},
	eprint = {2109.12106},
	note ={Preprint},
}

@article{MW10,
	author = {Ma'u, S. and Woodward, C.},
	title = {Geometric realizations of the multiplihedra},
	fjournal = {Compositio Mathematica},
	journal = {Compos. Math.},
	issn = {0010-437X},
	volume = {146},
	number = {4},
	pages = {1002--1028},
	year = {2010},
}

@misc{MePou26,
	author = {Melani, V. and Pourcelot, H.},
	title = {Dioperads, {Frobenius} monoidal functors and duality},
	year = {2026},
	note = {Preprint},
	eprint = {2604.01080}
}

@article{PP25,
	author = {Pilaud, V. and Poliakova, D.},
	title = {Hochschild polytopes},
	fjournal = {Mathematische Annalen},
	journal = {Math. Ann.},
	issn = {0025-5831},
	volume = {392},
	number = {2},
	pages = {2395--2441},
	year = {2025},
}

@article {Po89,
	AUTHOR = {Power, A. J.},
	TITLE = {A general coherence result},
	JOURNAL = {J. Pure Appl. Algebra},
	FJOURNAL = {Journal of Pure and Applied Algebra},
	VOLUME = {57},
	YEAR = {1989},
	NUMBER = {2},
	PAGES = {165--173},
}

@phdthesis{Ro24,
	author = {Rom{\'a}n, M.},
	title = {Monoidal {Context} {Theory}},
	year = {2024},
	school ={Tallinn University of Technology},
	eprint = {2404.06192},
}

@article{SaUm04,
	author = {Saneblidze, S. and Umble, R.},
	title = {Diagonals on the permutahedra, multiplihedra and associahedra},
	fjournal = {Homology, Homotopy and Applications},
	journal = {Homology Homotopy Appl.},
	issn = {1532-0073},
	volume = {6},
	number = {1},
	pages = {363--411},
	year = {2004},
}

@misc{Sim98,
	author = {Simpson, C.},
	title = {Homotopy types of strict 3-groupoids},
	year = {1998},
	eprint = {math/9810059},
	note = {Preprint},
}

@article{St63,
	author = {Stasheff, J.},
	title = {Homotopy associativity of {{\(H\)}}-spaces. {I}, {II}},
	fjournal = {Transactions of the American Mathematical Society},
	journal = {Trans. Am. Math. Soc.},
	issn = {0002-9947},
	volume = {108},
	pages = {275--292, 293--312},
	year = {1963},
	language = {English},
}

@book{St70,
	author = {Stasheff, J.},
	title = {{{\(H\)}}-spaces from a homotopy point of view},
	fseries = {Lecture Notes in Mathematics},
	series = {Lect. Notes Math.},
	issn = {0075-8434},
	volume = {161},
	year = {1970},
	publisher = {Springer, Cham},
}

@article{JoSt91,
	author = {Joyal, A. and Street, R.},
	title = {The geometry of tensor calculus. {I}},
	fjournal = {Advances in Mathematics},
	journal = {Adv. Math.},
	issn = {0001-8708},
	volume = {88},
	number = {1},
	pages = {55--112},
	year = {1991},
}

@phdthesis{Ta51,
	author = {Tamari, D.},
	title = {Monoides pr{\'e}ordonn{\'e}s et cha{\^{\i}}nes de {Malcev}},
	year = {1951},
	school ={Université de Paris},
}

@article{Ya24,
	author = {Yadav, H.},
	title = {Frobenius monoidal functors from (co){Hopf} adjunctions},
	fjournal = {Proceedings of the American Mathematical Society},
	journal = {Proc. Am. Math. Soc.},
	issn = {0002-9939},
	volume = {152},
	number = {2},
	pages = {471--487},
	year = {2024},
}

\end{document}